%% Capacities for Slices of Unknotted Planar Lagrangians 
%% Josh Sabloff and Lisa Traynor

\documentclass[11pt]{amsart}

\usepackage{graphicx, amsthm, amsfonts, amscd,epstopdf} 
\usepackage[all]{xy}
% Header file for Higher Dim'l Duality.  Contains declarations for
% common shortcuts and theorem environments.  NO OVERALL FORMATTING!
% Josh Sabloff [Created 2/13/02 ... last revised 6/15/07]

% Document-specific commands

\newcommand{\e}{\ensuremath{\mathbf{e}}}
\newcommand{\x}{\ensuremath{\mathbf{x}}}
\newcommand{\y}{\ensuremath{\mathbf{y}}}

\newcommand{\pd}[2]{\ensuremath{\frac{\partial #1}{\partial #2}}}
\newcommand{\proj}{\ensuremath{\pi}}

\DeclareMathOperator{\ind}{Ind}

\DeclareMathOperator{\writhe}{wr}

% Easy access to ``blackboard bold'' letters

\newcommand{\rr}{\ensuremath{\mathbb{R}}}
\newcommand{\zz}{\ensuremath{\mathbb{Z}}}

% Theorems and such
\theoremstyle{plain}
\newtheorem{thm}{Theorem}[section]

\newtheorem{cor}[thm]{Corollary}
\newtheorem{lem}[thm]{Lemma}

\newtheorem{prop}[thm]{Proposition}

\theoremstyle{definition}
\newtheorem{defn}[thm]{Definition}

\theoremstyle{remark}
\newtheorem{rem}[thm]{Remark}
\newtheorem{exam}[thm]{Example}

\numberwithin{equation}{section}

  \def\bot{\beta}
\def\top{\tau}

\def\x{\mathbf{x}}
\def\y{\mathbf{y}}

% ****************************************
\begin{document}

\title[Existence and Squeezing of Lagrangian Cobordisms]
{Obstructions to the Existence and Squeezing of Lagrangian
Cobordisms}

\author[J. Sabloff]{Joshua M. Sabloff} \address{Haverford College,
  Haverford, PA 19041} \email{jsabloff@haverford.edu}

\author[L. Traynor]{Lisa Traynor} \address{Bryn Mawr College, Bryn
  Mawr, PA 19010} \email{ltraynor@brynmawr.edu}

\begin{abstract}
  Capacities that provide both qualitative and quantitative
  obstructions to the existence of a Lagrangian cobordism between two
  $(n-1)$-dimensional submanifolds in parallel hyperplanes of
  $\mathbb{R}^{2n}$ are defined using the theory of generating
  families.  Qualitatively, these capacities show that, for example,
  in $\mathbb R^4$ there is no Lagrangian cobordism between two
  $\infty$-shaped curves with a negative crossing when the lower end
  is ``smaller''.  Quantitatively, when the boundary of a Lagrangian
  ball lies in a hyperplane of $\mathbb{R}^{2n}$, the capacity of the
  boundary gives a restriction on the size of a rectangular cylinder
  into which the Lagrangian ball can be squeezed.
\end{abstract}

\maketitle

% ****************************************

% ********************
\section{Introduction}
\label{sec:intro}

A fundamental problem in symplectic topology is to understand the
boundary between flexibility (when symplectic objects behave like
topological objects) and rigidity (when symplectic behavior is more
restrictive).  In this paper, we investigate  flexibility and
rigidity in the setting of existence questions for Lagrangian
cobordisms in the standard symplectic $\rr^{2n}$.

% **********
\subsection{Questions and Results}
\label{ssec:q-and-r}

Consider $\rr^{2n} = T^*(\rr^{n})$ with coordinates $(x_1,\ldots
x_{n}, y_1, \ldots y_{n})$ and the standard symplectic form $\omega =
\sum dx_i \wedge dy_i$.   Consider an $(n-1)$-dimensional submanifold $L_a$ in $\{
y_n = a \}$ and an $(n-1)$-dimensional submanifold $L_b$ in $\{ y_n =
b\}$.  A natural and important question to ask is whether or not there
is a Lagrangian cobordism between these submanifolds; that is, does
there exist an $n$-dimensional submanifold $L_{[a,b]}$ of $\rr^{2n}$ with $\omega
|_{TL_{[a,b]}} = 0$ that intersects $\{y_n=a\}$ transversally to form $L_a$
and $\{y_n = b\}$ transversally to form $L_b$?   

Topological data provides restictions on such a cobordism.  We will first focus
on $\rr^4$, though the restrictions and our results will
have higher dimensional analogues.  Let 
$\pi: \rr^{4} \to \rr^{2}$ be the
projection to the $(x_1, y_1)$ coordinates.   
It is easy to see that a cobordism between two connected curves $L_a$ and
$L_b$ can exist only if $\pi (L_a)$ and $\pi(L_b)$ bound the same
signed area and have the same winding number. Further, if
$\writhe(L_a)$ denotes the writhe of the diagram $\pi(L_a)$ with
respect to the blackboad framing, the canonical isomorphism between
the tangent and normal bundles of a Lagrangian submanifold then leads
to the following restriction on the Euler characteristic of the
cobordism:
\begin{equation} \label{eqn:euler}
  \writhe(L_b) - \writhe(L_a) = \chi(L_{[a,b]}).
\end{equation}

In this paper, we produce obstructions that
go beyond these basic ones. These new obstructions can be viewed both
qualitatively and quantitatively.  For an example of the former,
consider the following $\infty$-shaped curves in $\rr^3$:
\begin{equation*}
  8^{1}_\pm (r)
  := \{(x_1, x_2, y_1):  
  x_1^2 + x_2^2 = r^2, \quad y_1 = \pm 2x_1x_2\}.
\end{equation*}
Let $i_a$ denote the inclusion of $\rr^3$ into $\{ y_2 = a
\}$.  On one hand, it is possible to construct a Lagrangian cobordism
between inclusions of $8^1_-(R)$ and $8^1_-(r)$ when the 
larger curve is included at a lower level:  for example,
as explicitly shown in Example~\ref{exam:cylinder_construct},
 there exists a Lagrangian submanifold in $\rr^4$ so that:
$$\left( L \cap \{ y_2 = 1\} \right) = i_{1} (8^1_-(5)), \qquad 
\left( L \cap \{ y_2 = 10\} \right)
= i_{10} (8^1_-(4)).$$ On the other hand, a basic question probing the
rigidity of Lagrangian cobordisms, posed by Y.\ Eliashberg, is whether
or not there exists a Lagrangian cobordism between $\infty$-shaped
curves with a negative crossing when the bottom end is smaller.  This
situation is summarized graphically in Figure~\ref{fig:basic-slices}.

\begin{figure}   
  \centerline{\includegraphics{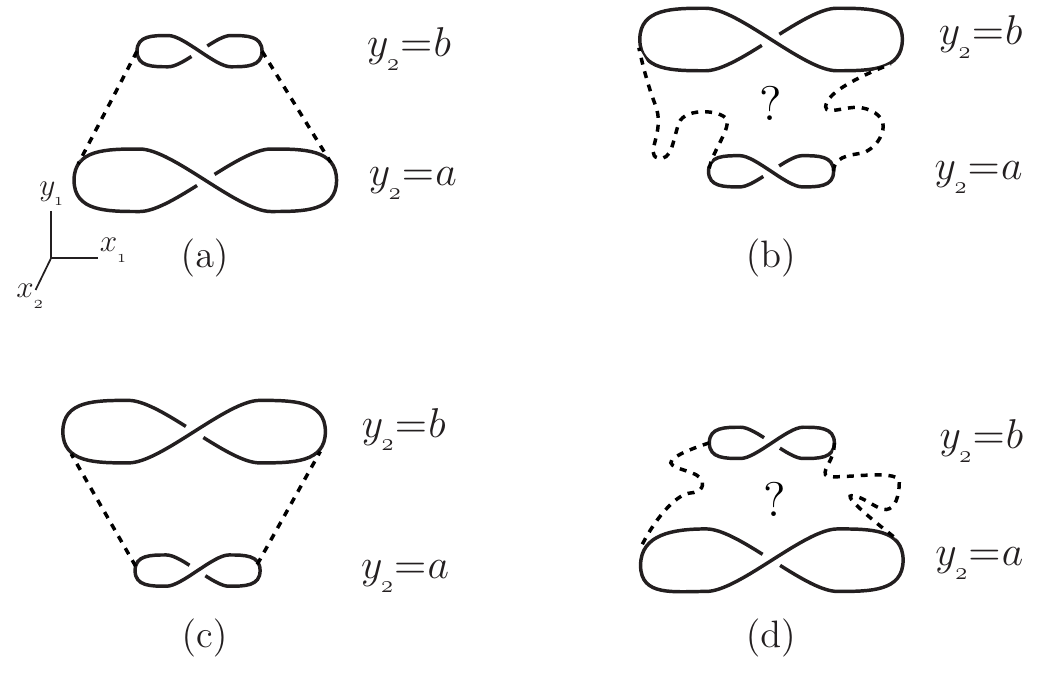}}
  \caption{ Parts (a) and (c) show two pairs of curves that can be
    joined by a Lagrangian cobordism.  Is there a Lagrangian cobordism
    as in parts (b) and (d) when the ordering of the heights of these
    curves is changed? }
  \label{fig:basic-slices}
\end{figure}

The following theorem gives an answer to this question:

\begin{thm}  
  \label{thm:8cobord} 
  If $r \leq R$ and $a < b$, there does not exist a Lagrangian
  cobordism $L \subset \{ a \leq y_2 \leq b \} \subset \rr^4$ 
%  that is  transverse to $\{y_2 = a\}$ and $\{ y_2 = b \}$ and satisfies
  with 
  $$(L \cap \{ y_2 = a\} )= i_{a}(8^1_-(r) ),
  \qquad (L \cap \{ y_2 = b\} )= i_{b}(8^1_-(R) ).$$
\end{thm}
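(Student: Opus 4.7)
The strategy is to apply the generating-family capacity developed earlier in the paper as a quantitative obstruction. First, I would view each $8^1_-(s)$ as a Legendrian curve in the contact hyperplane $\{y_2 = c\}$ of $\rr^4$ and exhibit a generating family $F_s$ for it of the type used in the paper. Because the figure-eight consists of two branches $y_1 = \pm 2 x_1 \sqrt{s^2 - x_1^2}$ meeting in a single negative crossing, a difference-of-functions generating family is straightforward to write down, and its positive critical values are an explicit, strictly increasing function of $s$. Consequently the capacity $c(8^1_-(s))$ extracted from these critical values is strictly increasing in $s$.

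Next, assume for contradiction that a Lagrangian cobordism $L_{[a,b]}$ as in the theorem exists. Using the generating-family extension machinery of the paper, one produces a single generating family on $L_{[a,b]}$ whose restrictions over the slices $y_2 = a$ and $y_2 = b$ reproduce, up to stabilization equivalence, the end families $F_r$ and $F_R$. The main capacity obstruction proved earlier then compares critical values at the two ends: because the cobordism runs upward from $a$ to $b$ and the crossing of $8^1_-$ is negative, the inequality must force $c(8^1_-(R)) \leq c(8^1_-(r))$, so $R \leq r$. Combined with the hypothesis $r \leq R$, this either yields the desired contradiction directly or leaves only the equality $r=R$, which is excluded by strictness of the capacity inequality for any cobordism with $a < b$ (note that no honest Lagrangian product cylinder exists, since $x_2$ is non-constant along $8^1_-(r)$, so the equality case cannot correspond to a trivial cobordism).

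The principal technical obstacle is the extension of generating families from the ends to the whole cobordism: one must produce a single generating function on an auxiliary bundle over $\rr^2$ whose fiber critical locus recovers $L_{[a,b]}$ and which restricts (after the usual stabilization) to $F_r$ and $F_R$ over the two slices. This is the step where the generating-family machinery of the paper does its real work, and it is what converts the \emph{sign} of the crossing of $8^1_-$ into the correct direction of the capacity inequality. Once the extension is in hand, the remainder is a min-max comparison of critical values between the two boundary slices, combined with the explicit monotonicity of $c(8^1_-(s))$ in $s$ computed in the first step.
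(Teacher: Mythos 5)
Your overall strategy (capacities plus Monotonicity, deriving a contradiction from the assumed cobordism) matches the paper, but there is a genuine gap at precisely the step you flag as the ``principal technical obstacle,'' and you do not resolve it. You propose to ``produce a single generating family on $L_{[a,b]}$'' directly. No such machinery exists in the paper: the capacity framework is defined only for (slices of) \emph{unknotted planar Lagrangians}, because these are exactly the Lagrangians for which Theorem~\ref{thm:2n-exist+unique} guarantees a quadratic-at-infinity generating family. A raw Lagrangian cobordism $L \subset \{a \leq y_2 \leq b\}$ has no generating family in this sense, and one cannot ``extend the end families across the cobordism'' without first knowing the cobordism sits inside an unknotted planar Lagrangian.

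What the paper actually does to bridge this gap has three ingredients you are missing. First, it glues caps onto both ends of $L$ --- a disk in $\{y_2 \geq b\}$ with boundary $i_b(8^1_-(R))$ from Example~\ref{exam:cylinder_construct}, and a punctured plane in $\{y_2 \leq a\}$ with boundary $i_a(8^1_-(r))$ --- via Lemma~\ref{lem:gluing}, producing a flat-at-infinity Lagrangian $L'$ without boundary. Second, it needs $L'$ to be a \emph{plane}: this is where the writhe/Euler-characteristic constraint (Lemma~\ref{lem:writhe}) enters, forcing the cobordism piece to be an annulus since $\writhe(8^1_-(r)) = \writhe(8^1_-(R))$. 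Third, planar and flat-at-infinity implies unknotted in $\rr^4$ by the Eliashberg--Polterovich theorem. Only after these steps can one invoke the capacity machinery (which is the content of Theorem~\ref{thm:higher-dim8s}). Your proposal skips the topological constraint entirely, and that constraint is not cosmetic: without it, the glued-up Lagrangian could have nontrivial topology and neither Eliashberg--Polterovich nor Theorem~\ref{thm:2n-exist+unique} would apply.

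A smaller point: your treatment of the equality case $r=R$ is not quite on the right footing. The exclusion does not come from the nonexistence of a ``Lagrangian product cylinder'' but from the strict Monotonicity statement proved in the paper: whenever the relevant capacity is nonzero (which it is here, being $\pm$ the lobe area), the inequality in Monotonicity is strict, because the $x_2$-heights of the two preimages of the double point are distinct on an embedded Lagrangian and so the critical value moves with nonzero speed. Your observation about lobe areas being strictly increasing in $s$ is correct and is essentially what the paper computes (Lemma~\ref{lem:crit-val} and Example~\ref{exam:diagram-crit-vals}), but without the reduction to the unknotted setting you cannot legitimately apply Monotonicity to the hypothetical cobordism.
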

\noindent
There are analogous statements about cobordisms between
$\infty$-shaped curves with positive crossings: in this case, there
does exist a Lagrangian cobordism when the bottom curve is smaller but
not if the bottom curve is larger.  This theorem illustrates an
asymmetry in some Lagrangian cobordisms.  There are many pairs of
curves besides the $\infty$-shaped curves that can be realized as the
ends of a Lagrangian cobordism in $\rr^4$ but not if the
ordering of their heights is changed.  In particular, the asymmetry of
Lagrangian cobordisms between curves extends to any Lagrangian
cobordism between connected curves that can be extended to a
planar Lagrangian that agrees with $\{ y_1 = y_2 = 0\}$ outside
a compact set.  This is proved in
\cite{arnold-paper}, using the techniques of this paper, where it
is shown that the set of connected negative (or
positive) hyperplane slices of flat-at-infinity planar Lagrangians in $\rr^4$ has
the structure of a partially ordered set.

On the quantitative side, we show that it is possible to measure the
``size" of a Lagrangian disk with boundary in a hyperplane.  
Consider a Lagrangian disk $L^2 \subset \rr^4$ that
transversally intersects $\{ y_2 = a \}$ at its boundary 
  $\partial L = i_a(8^1_-(r))$.  The projection
of this boundary curve to the $x_1y_1$-plane has two ``lobes'' having
equals areas (of opposite signs); denote the absolute value of the
area of one of these lobes by $A$.  In fact, the lobe area $A$ determines
whether the Lagrangian disk can be squeezed into a  rectangular
cylinder $\rr^2 \times R$, where $R$ is a rectangle in the
$x_2y_2$-coordinates; see Figure~\ref{fig:non-squeeze}.  More
precisely, we have:

\begin{thm}
  \label{thm:8squeeze}
  Suppose a Lagrangian disk $L \subset \rr^4$ transversally intersects
  $\{ y_2 = a \}$ and   
  $(L \cap \{y_2=a\} ) = \partial L =  i_a(8^1_\pm(r))$, for some $r > 0$. 
  %Suppose a Lagrangian disk $L \subset \rr^4$ has boundary $\partial L
  %= (L \cap \{y_2=a\} ) = i_a(8^1_\pm(r))$, for some $r > 0$.  
     Suppose that each lobe of
  $\pi(\partial L)$ bounds a region with area of absolute value $A$.
  If the entire Lagrangian disk lies in the rectangular cylinder
  $$C = \left\{ (x_1,x_2,y_1,y_2) \;:\; (x_2, y_2) \in I_{x_2} \times I_{y_2} \right\},$$ 
  for some intervals $I_{x_2} \subset \rr$ of length $\ell$ and $I_{y_2}
  \subset \rr$ of length $w$, then:
  $$\ell \cdot w \geq A.$$
\end{thm}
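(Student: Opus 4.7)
The plan is to prove the inequality by exhibiting a nonnegative numerical invariant sandwiched between $A$ and $\ell \cdot w$.  Using the generating-family capacities introduced in the body of the paper, I would associate to the Legendrian boundary $\Lambda = \partial L$ in the contact hyperplane $\{y_2 = a\}$ an invariant $c(\Lambda) \geq 0$, and then verify two estimates separately: first, $c(i_a(8^1_\pm(r))) = A$; second, $c(\partial L) \leq \ell \cdot w$ whenever $\partial L$ bounds a Lagrangian filling contained in the cylinder $C$.  Together these yield $A = c(\partial L) \leq \ell \cdot w$, as desired.

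The first estimate is an explicit computation.  Starting from the defining equations $x_1^2 + x_2^2 = r^2$ and $y_1 = \pm 2 x_1 x_2$ of $8^1_\pm(r)$, I would write down a standard generating family whose associated difference function has a distinguished pair of Morse critical points corresponding geometrically to the two lobes of $\pi(\partial L)$.  A Stokes-theorem identification of critical-value differences with signed areas then shows that the relevant critical-value difference is precisely the lobe area $A$, giving $c(8^1_\pm(r)) = A$.

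The second estimate is the main technical step.  The key property I expect the capacity to satisfy is a displacement-energy monotonicity: if $\partial L$ bounds a Lagrangian filling $L$ that is Hamiltonian displaceable, then $c(\partial L)$ is bounded above by the Hofer displacement energy of $L$.  Applied to $L \subset C$, I would use the Hamiltonian $H(x_2, y_2) = \ell \cdot y_2$ with an appropriate cutoff so that $H$ is unchanged on a neighborhood of $I_{x_2} \times I_{y_2}$ and has compact support; its time-$1$ flow then translates $L$ by $\ell$ in the $x_2$-direction and thereby displaces $L$ from itself, with Hofer oscillation arbitrarily close to $\ell \cdot w$.  The main obstacle is establishing the monotonicity inequality within the generating-family framework: one must extend the generating family of $\partial L$ through the filling $L$, track the critical-value spread under the displacing Hamiltonian isotopy, and verify that this spread cannot exceed the Hofer oscillation of the generating Hamiltonian.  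Once monotonicity is in place, the desired bound $c(\partial L) \leq \ell \cdot w$ follows by letting the cutoffs tighten.
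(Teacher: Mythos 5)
Your first step, computing the capacity of $8^1_\pm(r)$ to be the lobe area $A$ via a Stokes-theorem identification, matches what the paper does in Lemma~\ref{lem:crit-val} and Proposition~\ref{prop:4-dim-compute}. You also omit a preliminary step the paper needs: extending the disk $L$ to a flat-at-infinity planar Lagrangian via Example~\ref{exam:cylinder_construct} and the gluing Lemma~\ref{lem:gluing}, since the slice capacities are only defined for slices of such a Lagrangian, not for a Legendrian in a single contact hyperplane in isolation.

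The real gap is in your treatment of the bound $c \leq \ell \cdot w$. The paper's Theorem~\ref{thm:ht-cap} does not invoke displacement energy at all. Instead it uses Continuity and the derivative computation from the proof of Monotonicity: as a function of the height $t$, the capacity $C_-(t)$ is continuous and piecewise differentiable with $C_-'(t) = x_n(t) - \tilde{x}_n(t)$, where $x_n, \tilde x_n$ are the $x_n$-coordinates of a pair of preimages of a double point of $\pi(L_t)$. Since $L \subset C$ forces both coordinates into $I_{x_n}$, one gets $|C_-'(t)| \leq \ell$; since $L_t = \emptyset$ outside an interval of $y_n$-length $w$, the capacity falls from $A$ to $0$ over a $t$-interval of length at most $w$, giving $A \leq \ell w$ outright. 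This is elementary, local-in-$t$, and entirely internal to the generating-family framework.

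Your displacement-energy route is a genuinely different strategy, but the ``displacement-energy monotonicity'' you flag as the main obstacle is exactly where the content of the theorem lies, and it does not follow from the properties established in the paper. The capacities here are attached to a slice of a fixed filling, and Invariance in Theorem~\ref{thm:properties} only covers compactly supported isotopies \emph{fixing the slice}. A Hamiltonian that translates $L$ by $\ell$ in $x_2$ moves the slice, so there is no statement in the paper controlling the critical-value spread of $\Delta_a$ by the Hofer oscillation of such a Hamiltonian, and it is not clear how to obtain one without essentially reproving the derivative bound. As written, your argument substitutes an unproved (and for these slice capacities, nonstandard) inequality for the step the paper actually carries out.
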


\begin{figure}
  \centerline{\includegraphics[width=5in]{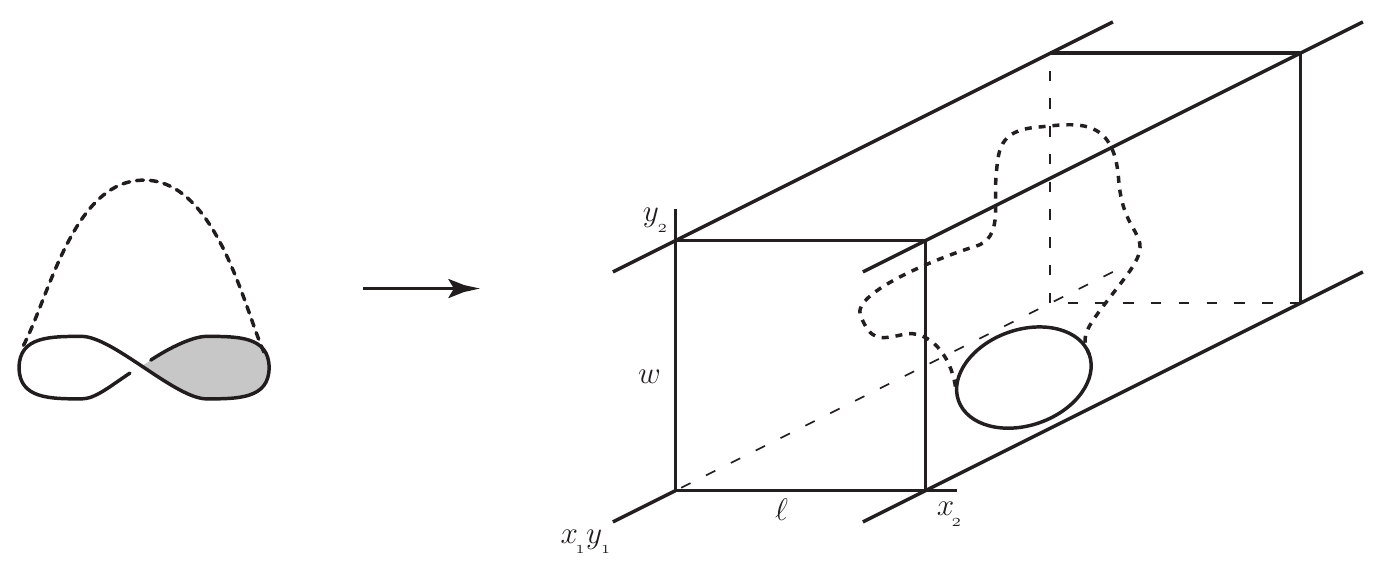}}
  \caption{If the shaded area on the left is greater than the area $l
    \cdot w$, then the Lagrangian disk on the left cannot be squeezed
    into the rectangular cylinder on the right.}
  \label{fig:non-squeeze}
\end{figure}

To prove and generalize the above two theorems, we pass 
to Lagrangians without boundary, namely ``unknotted
planar Lagrangians.''  Let $L_0$ denote the zero-section of
$T^*\rr^n$.  We say that a Lagrangian submanifold is {\bf planar} if
it is diffeomorphic to $\rr^n$; a planar Lagrangian is {\bf
  flat-at-infinity} if it agrees with $L_0$ outside a compact set of
$\rr^{2n}$, and is {\bf unknotted} if it is Lagrangian isotopic to
$L_0$ via a compactly supported symplectic isotopy of $\rr^{2n}$.  An
unknotted planar Lagrangian will be flat-at-infinity; in $\rr^4$, a
flat-at-infinity planar Lagrangian is always unknotted
\cite{ep:local-knots}.  This together with a gluing result allows us to
avoid in the statements of Theorems~\ref{thm:8cobord} 
 and \ref{thm:8squeeze} an ``extendability"
hypothesis  that will appear
in later theorems.   As shall be described in
Section~\ref{sec:background}, unknotted planar Lagrangians can be
studied using the technique of generating families.

We now set some terminology.  We say that $a$ is a {\bf generic
  height} for an unknotted planar Lagrangian $L$ if $L$ is transverse
to the hyperplane $\{y_n = a\}$.  For a generic height $a$, the
\textbf{slice $L_a$} is the intersection $L \cap \{ y_n = a \}$. Such
a slice is a submanifold of $\{y_n = a\} \simeq \rr^{2n-1}$; the
exactness of $L$ easily implies that the projection $\pi(L_a)$ of
$L_a$ to $\rr^{2n-2} = \{(x_1, \ldots, x_{n-1}, y_1, \ldots,
y_{n-1})\}$ is an exact Lagrangian immersion and, further, that the
projection has trivial Maslov class.  This generalizes the fact that
in $\rr^4$, the projection $\pi(L_a)$ bounds zero signed area and has
winding number zero.

In higher dimensions, Theorem~\ref{thm:8squeeze} generalizes
to show the nonexistence of cobordisms that can be extended to  
unknotted planar Lagrangians.
 First, the curves $8^1_\pm(r)$ should be generalized to:
\begin{align*}
  8^{n-1}_\pm (r)
  := \{(&x_1, \dots,  x_n, y_1, \dots, y_{n-1}):  \\
  &\sum_{i=1}^n x_i^2 = r^2, \quad y_i = \pm 2x_ix_{n}, \quad i= 1,
  \dots, n-1 \}.
\end{align*}
A construction similar to that in $\rr^4$ provides an unknotted planar
Lagrangian whose restriction gives a
cobordism from $8^{n-1}_-(R)$ up to $8^{n-1}_-(r)$ when
$r<R$.  On the other hand:

\begin{thm} 
  \label{thm:higher-dim8s} 
  If $r \leq R$ and $a < b$, there does not exist an unknotted
  planar Lagrangian whose restriction to $\{ a \leq y_n \leq b \} \subset \rr^{2n}$
   gives a Lagrangian 
  cobordism $L$ 
  with 
   $$(L \cap \{ y_2 = a\} )= i_{a}(8^1_-(r) ),
  \qquad (L \cap \{ y_2 = b\} )= i_{b}(8^1_-(R) ).$$
\end{thm}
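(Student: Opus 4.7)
The plan is to imitate in arbitrary dimension the strategy used for Theorem~\ref{thm:8cobord} in $\rr^4$: attach to each generic slice $L_c$ of an unknotted planar Lagrangian a Morse-theoretic capacity read off from a generating family, show that this capacity is monotone along a cobordism in the direction opposite to what Theorem~\ref{thm:higher-dim8s} would require, and compute the capacity of the model $\infty$-shaped slices $8^{n-1}_-(\rho)$ as a strictly increasing function of $\rho$.

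First, the unknottedness hypothesis allows me to extend the cobordism $L$ to an unknotted planar Lagrangian $\widetilde L \subset \rr^{2n}$ that agrees with the zero section outside a compact set. Since $\widetilde L$ is Lagrangian isotopic to $L_0$ through compactly supported symplectomorphisms, the standard theory of generating families provides a function $F: \rr^n \times \rr^N \to \rr$ which is linear-quadratic at infinity and generates $\widetilde L$. Restricting $F$ to the hyperplane $\{y_n = c\}$ gives a generating family for the slice $L_c$, and the standard difference construction $(x,\xi,x',\xi') \mapsto F(x,\xi) - F(x',\xi')$ produces a generating family for the exact Lagrangian immersion $\pi(L_c) \subset \rr^{2n-2}$, whose critical points correspond to self-intersections of $\pi(L_c)$.

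Second, I would define the capacity $c(L_c)$ as the difference between a pair of critical values of this difference family, selected at the preimages of the double point of $\pi(8^{n-1}_-(\rho))$ over the origin so that their relative Morse (or Morse--Bott) index reflects the negative crossing. Stokes' theorem applied along a path on the immersed Lagrangian joining the two preimages identifies this difference with the absolute lobe area of $\pi(8^{n-1}_-(\rho))$. From the explicit parametrization in the definition of $8^{n-1}_-(\rho)$, this lobe area is a strictly increasing continuous function $A(\rho)$ with $A(0) = 0$; hence $c(8^{n-1}_-(r)) < c(8^{n-1}_-(R))$ whenever $r < R$.

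Third, the cobordism $L \subset \{a \leq y_n \leq b\}$ induces a one-parameter family of generating families for the slices $L_c$ interpolating between $L_a$ and $L_b$. A min-max argument on the chosen pair of critical values, together with a continuity argument through heights at which transversality fails, should show that $c(L_c)$ is non-increasing in $c$. Combined with $r \leq R$, this forces $A(r) \geq A(R)$, contradicting the strict monotonicity of $A$; the remaining equality case $r = R$ would be ruled out by a rigidity argument showing that any cobordism realizing such equality is forced to be tangent to the slicing hyperplanes on an open set of heights, violating transversality at the ends.

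The main obstacle I expect is the monotonicity step: both identifying the correct pair of critical points to track across all heights and showing that they remain distinguishable enough to give a well-defined capacity will require care. In $\rr^4$ the slices are curves and the combinatorics of self-intersections of the projection is essentially zero-dimensional, while in higher dimensions the relevant critical locus of the difference generating family should carry the $S^{n-1}$ symmetry of $8^{n-1}_-(\rho)$ and lead to Morse--Bott critical submanifolds; verifying that the associated min-max invariant is well-defined, continuous along the cobordism, and genuinely sensitive to the negative crossing appears to be the technical heart of the proof.
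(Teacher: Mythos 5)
Your plan correctly identifies the main engine of the argument (generating family capacities, the difference function, Monotonicity, and the identification of the critical value with lobe area via Stokes), but it is missing a step that the paper treats as essential, and one of your closing steps would not actually work.

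The missing step: you need to show that the slice $L_a = i_a(8^{n-1}_-(s))$ can only occur at heights $a > 0$ (with respect to the zero section) before any monotonicity argument can get off the ground. The Monotonicity property in Theorem~\ref{thm:properties} only holds when $0 \notin [a,b]$, so unless you know the entire cobordism lives in $\{y_n > 0\}$ (or $\{y_n < 0\}$), you cannot control the direction of the inequality at all --- the capacities could increase and then decrease across $y_n = 0$. The paper proves $a > 0$ by a separate geometric argument: if $a < 0$, you could glue the resulting Lagrangian ball to an explicitly constructed Lagrangian ball in $\{y_n \geq a'\}$ with matching boundary (via Lemma~\ref{lem:gluing}) to build an embedded exact Lagrangian sphere, contradicting Gromov. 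Your proposal never addresses this, and your ``monotonicity along the cobordism'' step has no directional content without it. Relatedly, once $a>0$, the paper uses that $L_\tau = \emptyset$ for $\tau \gg 0$ to pin down that $C_+ = c_- = 0$, which is what lets the capacity live entirely in $C_-$ and $c_+$ and makes the monotonicity a one-sided inequality.

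A secondary issue: your disposal of the $r = R$ case via a ``rigidity argument showing tangency to slicing hyperplanes'' is not correct and not what is needed. The equality case is handled directly by the strictness clause of Monotonicity: equality of capacities under a cobordism forces both to be zero, but the capacity $v_r > 0$ here is nonzero, so the inequality between levels is strict. You should also note that the paper takes a different route to $v_r < v_R$ than your direct computation of $A(\rho)$: rather than parametrizing and integrating, it uses the explicitly constructed cobordism from $8^{n-1}_-(R)$ (below) to $8^{n-1}_-(r)$ (above) from Example~\ref{exam:cylinder_construct}, then invokes Non-Vanishing and Monotonicity to deduce $v_r < v_R$ without ever touching the explicit area formula. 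Your direct computation is plausible as an alternative route, but the paper's functorial argument is cleaner, and --- importantly --- it also automatically produces the contradiction: if the hypothetical cobordism in the theorem existed, the same argument run in reverse would give $v_R < v_r$.
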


An extension of non-squeezing phenomena in
Theorem~\ref{thm:8squeeze} to other boundary slices
and to higher dimensions requires replacing the 
the area of one of the lobes  with a generating family  ``capacity'' of the 
slice.  The next subsection describes this new theory of  capacities.

% **********
\subsection{The Capacity Framework}
\label{ssec:cap-intro}

Proofs and extensions of Theorems~\ref{thm:8cobord},
\ref{thm:8squeeze}, and \ref{thm:higher-dim8s} involve a new notion of
a capacity.  The theory of capacities was preceded by Gromov's concept
of a symplectic radius \cite{gromov:hol} and was originally developed
through variational principles by Ekeland and Hofer
\cite{ek-h:symp-top-ham-dyn-1, ek-h:symp-top-ham-dyn-2}.  Viterbo
\cite{viterbo:generating} gave an alternative definition of a capacity
for Lagrangian submanifolds of cotangent bundles using the
finite-dimensional technique of generating families. In this paper, we
develop a new type of (generating family) capacity. Our techniques
build off of Viterbo's --- especially those he uses for families of
symplectic reductions in \cite[\S5]{viterbo:generating} --- but
produce capacities that allow us to study slices of an unknotted
planar Lagrangian.  More precisely, for each slice $L_a$ of a
unknotted planar Lagrangian $L \subset \rr^{2n}$ at a generic height
$a$, we define two lower and two upper capacities:
\begin{equation*} c^{L, a}_\pm : H^*(L_a) \to (-\infty, 0], \qquad
  C^{L,a}_\pm: H^*(L_a) \to [0, \infty).
\end{equation*} 
By analogy with the properties of capacities for subsets of
$\rr^{2n}$, we have the following theorem:

\begin{thm}
  \label{thm:properties} 
  The capacities satisfy the following properties:
  \begin{description}
  \item[Monotonicity] Suppose $a < b$, $0 \not \in [a,b]$, and $a$ and
    $b$ are generic heights of an unknotted planar Lagrangian $L$.
    Let $L_{[a,b]} = \bigcup_{t \in [a,b]} L_t$ be the cobordism between $L_a$
    and $L_b$ given by $L$, and let $j_t: L_t \to L_{[a,b]} $ be the inclusion
    map.  If $u \in H^*(L_{[a,b]} )$ then:
    \begin{align*} c^{L, a}_+(j^*_a u) &\leq c^{L, b}_+(j^*_bu), &
      C^{L, a}_+(j^*_au) &\leq C^{L, b}_+(j^*_bu), \\ c^{L,
        a}_-(j^*_au) &\geq c^{L, b}_-(j^*_bu), & C^{L, a}_-(j^*_au)
      &\geq C^{L, b}_-(j^*_bu).
    \end{align*}
    In any of the above relations, equality is possible only when both
    capacities equal $0$.
  \item[Continuity] Given $u \in H^*(L_{[a,b]})$, the function $c(t) =
    c^{L,t}(j^*_t u)$, where $c$ is any one of the four capacities, is
    continuous on $[a,b]$ (with removable discontinuities at
    non-generic levels) and piecewise differentiable.
  \item[Invariance] If $L^0$ and $L^1$ are unknotted planar
    Lagrangians that are isotopic via a compactly supported symplectic
    isotopy that fixes the slice at a generic height $a$, then for any
    cohomology class $u \in H^*(L^0_a) = H^*(L^1_a)$, we have
    $c_\pm^{L^0, a} (u) = c_\pm^{L^1, a}(u)$ and $C_\pm^{L^0, a} (u) =
    C_\pm^{L^1, a}(u)$.
  \item[Non-Vanishing] For any generic, nonempty slice $L_a$ of an
    unknotted planar Lagrangian $L$ and for any nonzero $u \in
    H^*(L_a)$, at least one of the four capacities $c_{\pm}^{L, a} (u)
    , C_{\pm}^{L, a} (u)$ is nonzero.
  \item[Conformality] If $L$ is an unknotted planar Lagrangian and
    $\beta L$ denotes the image of $L$ under the dilation $(\x, \y)
    \mapsto (\beta \x, \beta \y)$ then, for any generic height $a$,
    any of the four capacities $c$, and any $u \in H^*(L_a) \simeq H^*(\beta
    L_{\beta a})$,
    $$c^{\beta L,\beta a} (u) = \beta^2 c^{L,a}(u).$$
  \end{description}
\end{thm}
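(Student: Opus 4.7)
The plan is to reduce all five properties to statements about min-max critical values of a difference function associated to a generating family, following the general strategy Viterbo developed for families of symplectic reductions in \cite[\S5]{viterbo:generating}.

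\textbf{Setup.} Given a generating family $F: \rr^n \times \rr^N \to \rr$ for the unknotted planar Lagrangian $L$, each slice $L_t$ inherits a generating family $F_t$ via the symplectic reduction by $\partial_{x_n}$ of the level set $\{y_n = t\}$; the parameter $t$ enters $F_t$ through an additive term depending linearly on $t \cdot x_n$. One then forms the difference function $\Delta_t(q,\eta;q',\eta') = F_t(q,\eta) - F_t(q',\eta')$, whose positive and negative critical values correspond to pairs of distinct points of $L_t$ with the same projection. The four capacities are defined as min-max critical values of $\Delta_t$ indexed by images of cohomology classes $u \in H^*(L_a)$ under appropriate inclusion-induced maps, with $c_\pm$ and $C_\pm$ selecting between the innermost/outermost positive/negative critical values.

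\textbf{Monotonicity and Continuity.} Both follow from the envelope theorem applied to the family $F_t$, $t \in [a,b]$. Differentiating a min-max critical value $c(t)$ gives $c'(t) = \partial_t \Delta_t$ evaluated at the realizing critical point; since $t$ enters $F_t$ through a term proportional to $t \cdot (x_n - x_n')$, this derivative has a definite sign on the positive branch and the opposite definite sign on the negative branch. Thus $c_+, C_+$ are non-decreasing and $c_-, C_-$ are non-increasing in $t$. At non-generic heights the slice $L_t$ fails to be a submanifold, but $F_t$ itself varies smoothly in $t$, which yields continuity with removable discontinuities and piecewise differentiability elsewhere. For the equality clause: if $c(t)$ is constant across $[a,b]$ then $\partial_t \Delta_t = 0$ at the realizing critical point, forcing $x_n = x_n'$ there, which in turn forces the two critical-point branches to coincide and the common value to be zero.

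\textbf{Invariance and Conformality.} Invariance rests on the uniqueness of generating families up to the standard equivalence relation of stabilization, fiber-preserving diffeomorphism, and addition of a fiber-constant function. A compactly supported symplectic isotopy fixing the slice at height $a$ induces such an equivalence between the reduced generating families of $L^0_a$ and $L^1_a$, and these moves preserve min-max critical values. Conformality is a direct substitution: the dilation $(\x,\y) \mapsto (\beta \x, \beta \y)$ rescales the primitive $y\,dx$ by $\beta^2$, so if $F$ generates $L$ then $\beta^2 F(\x/\beta, \eta/\beta)$ generates $\beta L$; every critical value of the associated difference function picks up a factor of $\beta^2$, and the slice at $y_n = a$ corresponds to the slice of $\beta L$ at $y_n = \beta a$.

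\textbf{Non-Vanishing as the Main Obstacle.} I expect this to be the subtlest property and the primary technical hurdle. The strategy is to adapt a Poincar\'e--Lefschetz-type duality argument from \cite{viterbo:generating}: the four capacities jointly pair $u \in H^*(L_a)$ with relative classes coming from the filtration of the total space of $\Delta_t$ by positive and negative sublevel sets. Simultaneous vanishing of all four capacities would place $u$ in the intersection of images from cohomology supported in arbitrarily small neighborhoods of the diagonal $\{\Delta_t = 0\}$, and a duality between the $\pm$ branches would then force $u = 0$. The real difficulty is verifying that the four chosen pairings are genuinely exhaustive --- that no nontrivial class escapes detection --- and in controlling the cohomology of the sublevel sets at infinity, for which the flat-at-infinity hypothesis on unknotted planar Lagrangians is essential.
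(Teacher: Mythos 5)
Your treatment of Conformality is correct and essentially the paper's, and the envelope-theorem derivative $\partial_t\Delta_t$ at a realizing critical point is indeed what the paper computes --- but the paper uses it only for the \emph{strictness} clause of Monotonicity. The weak inequalities are established cohomologically: by Equation~\eqref{eqn:difference}, $\Delta^\lambda_{b,+} \subset \Delta^\lambda_{a,+}$ for $a<b$, and a commutative diagram built from an extended difference function over the cobordism (Lemma~\ref{lem:mono-comm-diag}) gives $\ker(\varphi^\lambda_{a,+}\circ j_a^*) \subset \ker(\varphi^\lambda_{b,+}\circ j_b^*)$, hence the inequality on the suprema defining $c_+$. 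Getting the weak inequality directly from differentiation is circular without first proving continuity, and continuity is itself proved by the same inclusion-of-sublevel-sets mechanism.

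The two serious gaps are Invariance and Non-Vanishing. For Invariance, you appeal to uniqueness of generating families to claim the reduced families $F^0_a$ and $F^1_a$ are equivalent. But these generate the \emph{immersed} exact Lagrangian $\pi(L_a) \subset T^*\rr^{n-1}$, which is not Hamiltonian isotopic to the zero section, so Viterbo--Th\'eret uniqueness does not apply to them. The paper sidesteps this by showing (Lemmas~\ref{lem:cap-at-crit} and~\ref{lem:crit-val}) that the capacities lie in the set of critical values of $\Delta_a$, a discrete set determined by $\pi(L_a)$ alone, and that they vary continuously in the isotopy parameter (Proposition~\ref{prop:cont}); a continuous map into a fixed discrete set is constant. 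For Non-Vanishing, you sketch a Poincar\'e--Lefschetz duality between the $\pm$ branches and admit that verifying exhaustiveness of the four pairings is the ``real difficulty'' --- but that is the whole theorem. The paper's argument is different in kind: using flat-at-infinity, slide the height parameter to where the slice becomes empty, so the critical-non-crossing Lemma~\ref{lem:loc-1-param} forces $H^*(\Delta^\theta_a, \Delta^{-\theta}_a) = 0$ for $\theta \gg 0$; the long exact sequences of triples then give $\ker p_a^{-\theta} = \ker D_a^{\theta}$ (Lemma~\ref{lem:exactness}), and the Mayer--Vietoris splitting (Lemma~\ref{lem:m-v-splitting}) converts this into the statement about the four capacities. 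You identify the right hypothesis, but the duality is an unfilled placeholder where a concrete exact-sequence chase is required.
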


Although these capacities depend on the entire Lagrangian, it is
sometimes possible to compute these numbers only knowing the slice
$L_a$.  For example, suppose that the slice $L_a$ agrees with one of
our $\infty$-shaped curves $8_\pm^1(r)$ where the ``lobes'' each bound
a region with area of absolute value $A$.  Then for $0 \neq u \in
H^0(L_a)$ and $0 \neq v \in H^1(L_a)$, the following calculation shows
that these capacities carry geometric information about the slice:
\begin{center}
  \begin{tabular}{c|cccc} & $c^{L,a}_+(u)$ & $c^{L,a}_-(u)$ & $C^{L,a}_+(v)$ &
    $C^{L,a}_-(v)$ \\ \hline
    \includegraphics[height=15pt]{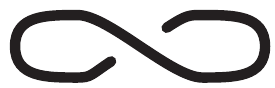} & $-A$ & 0 & 0 & $A$ \\
    \includegraphics[height=15pt]{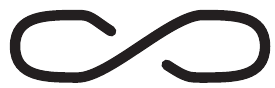} & 0 & $-A$ & $A$ & 0
  \end{tabular}
\end{center}
These calculations follow from Proposition~\ref{prop:4-dim-compute}.

A generalization of Theorem~ \ref{thm:8squeeze} to other shapes of
boundary curves and to higher dimensions can be formulated in terms
of capacities:
  
 \begin{thm} \label{thm:ht-cap}  Let $L \subset \rr^{2n}$ denote a
Lagrangian ball with  $\partial L = (L \cap  \{ y_n = a \})$ that has 
been obtained as the restriction of an unknotted planar Lagrangian
to $\{ y_n \geq a\}$ or to $\{ y_n \leq a \}$.     If the
  entire Lagrangian ball $L$ lies in the symplectic cylinder $\rr^{2n-2}
  \times I_{x_n} \times I_{y_n},$ for some intervals $I_{x_n} \subset \rr$
  of length $\ell$ and $I_{y_n} \subset \rr$ of length $w$, then for any $u \in H^*(\partial L)$:
  $$\ell \cdot w \geq |c^{L,a}(j^*_a u)|,$$
  where $c$ is any of the four capacities.
  \end{thm}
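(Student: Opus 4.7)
My plan is to combine the monotonicity and continuity properties of the four capacities with an envelope-theorem analysis of how the capacity varies with the slice height $t$, thereby converting the spatial constraint ``$L$ lies in a cylinder of cross-section $\ell \cdot w$'' into the desired bound on $|c^{L,a}(j_a^* u)|$.

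First, I would extend $L$ to an unknotted planar Lagrangian $\tilde L$ with $L = \tilde L \cap \{y_n \geq a\}$, and assume $I_{y_n} = [a, a + w]$ (the opposite-orientation case is symmetric). For $t \in [a, a + w]$ the slice $\tilde L_t$ coincides with $L_t$ and lies in the cross-section $\rr^{2n-2} \times I_{x_n} \times \{t\}$; for $t > a + w$ the slice is empty and the capacity vanishes. Set $f(t) = c^{\tilde L, t}(j_t^* u)$ for whichever of the four capacities we are bounding. Continuity (with removable discontinuities) forces $\lim_{t \to (a+w)^+} f(t) = 0$, and monotonicity forces $f$ to have constant sign on $[a, a + w]$, so that
$$|c^{L, a}(j_a^* u)| = |f(a)| = \left| f(a) - \lim_{t \to (a+w)^+} f(t) \right|.$$
This reduces the theorem to bounding the total variation of $f$ across the ball by $\ell \cdot w$.

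To bound the variation, I would exploit the generating-family description of the capacities, modeled on Viterbo's treatment of families of symplectic reductions in \cite{viterbo:generating}. The capacity $f(t)$ is realized as a min-max critical value of a one-parameter family $G_t(x, \eta) = F(x, \eta) - t x_n$, where $F$ is a generating family for $\tilde L$. Off a discrete set of exceptional levels, the envelope theorem yields
$$f'(t) = \partial_t G_t(x^*(t), \eta^*(t)) = -x_n^*(t),$$
where $(x^*(t), \eta^*(t))$ is the selected critical point and $x_n^*(t)$ is the $x_n$-coordinate of the corresponding point on $L_t$. Since $L_t \subset L \subset \rr^{2n-2} \times I_{x_n} \times I_{y_n}$, we have $x_n^*(t) \in I_{x_n}$ throughout, and integrating gives
$$|c^{L, a}(j_a^* u)| = \left| \int_a^{a + w} x_n^*(t)\, dt \right|,$$
the signed symplectic area between the trajectory $t \mapsto (x_n^*(t), t)$ and the $y_n$-axis in the $(x_n, y_n)$-plane. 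A translation of $\tilde L$ in the $x_n$-direction — which preserves flat-at-infinity, hence unknotted, Lagrangians — lets us reposition $I_{x_n}$ so that this signed area is bounded above by the area $\ell \cdot w$ of the enclosing rectangle $I_{x_n} \times I_{y_n}$.

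The main obstacle is the derivative computation, since the capacities are defined as a min-max rather than as a single non-degenerate critical value; the envelope-theorem formula requires the piecewise differentiability of $f$ guaranteed by Theorem~\ref{thm:properties}, together with a careful analysis of how the selected critical point persists (and possibly switches) as $t$ varies. A secondary subtlety is the translation normalization: since the capacity is not translation-invariant in $x_n$ on the nose, the shift argument must be organized so that the $t$-dependent correction terms cancel in the difference $f(a+w) - f(a)$, yielding the sharp constant $\ell \cdot w$ rather than a position-dependent bound.
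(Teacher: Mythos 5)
Your overall strategy — use Continuity to force the capacity to vanish at $t = a + w$, use piecewise differentiability, bound the derivative by $\ell$ via the envelope theorem, and integrate — is exactly the paper's. The paper proves the bound for $C_-$, notes the analogues are similar, and uses precisely the observation that $C_-(t) = 0$ once the slice is empty.

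However, your derivative formula is wrong, and the error traces to a misidentification of which function generates the min-max. You write the capacity as a min-max critical value of the family $G_t(x,\eta) = F(x,\eta) - t x_n$, i.e.\ of $F_t$, and apply the envelope theorem to get $f'(t) = -x_n^*(t)$. This would be the correct formula for a Viterbo-style capacity built directly from $F_t$, but the capacities in this paper are built from the \emph{difference function} $\Delta_t(\x, x_n, \e, \tilde{x}_n, \tilde{\e}) = F_t(\x, x_n, \e) - F_t(\x, \tilde{x}_n, \tilde{\e})$. Its critical points correspond to double points of $\pi(L_t)$, i.e.\ pairs of points on $L_t$ with the same $(\x, \y)$ but different $x_n$. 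The envelope computation carried out in the proof of Monotonicity gives $C'(t) = x_n(t) - \tilde{x}_n(t)$, a difference of \emph{two} $x_n$-coordinates, not a single one. This is not merely cosmetic: your $f'(t) = -x_n^*(t)$ is not translation-invariant in $x_n$ on its face (forcing the delicate normalization you flag as a ``secondary subtlety''), whereas the correct formula $x_n(t) - \tilde{x}_n(t)$ is manifestly translation-invariant and immediately satisfies $|C'(t)| \le \ell$ because both $x_n(t)$ and $\tilde{x}_n(t)$ lie in the interval $I_{x_n}$. With the correct formula, the translation step disappears entirely, and one side of the derivative bound comes for free from Monotonicity ($C_-' \le 0$), giving $-\ell \le C_-'(t) \le 0$ and hence $|C_-(a)| \le \ell w$ by integrating over $[a, a+w]$. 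So the gap is concrete but repairable: replace $F_t$-based min-max by $\Delta_t$-based min-max in the envelope step, and the translation normalization becomes unnecessary.
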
 

\noindent
After rearranging the inequality as $w \geq \frac{1}{\ell}
|c^{L,a}(j^*_a u)|$, this theorem can be viewed as a ``directionally
sensitive'' version for planar Lagrangians of the following result by
Viterbo \cite{viterbo:cap-in-cotangent}: a Lagrangian that is
Hamiltonian isotopic to the zero section of $T^*T^n$ and is contained
in the unit disk bundle for some metric $g$ has a bounded generating
family length $\gamma(L)$.

Beyond these qualitative and quantitative results, 
the theory of capacities has a number of other extensions and
potential applications.  For example, for a closed manifold $B$, it is
possible to define capacities for slices of any Lagrangian submanifold
of $T^*(B \times \rr)$ that has a generating family; the statements
and proofs go through almost without modification.  Further, as 
discussed in Section~\ref{ssec:ft}, the capacities can be thought of as
fitting into Eliashberg, Givental, and Hofer's Symplectic Field Theory
(SFT) framework, the latest generation of pseudo-holomorphic curve
invariants \cite{egh}.  Like a TQFT structure, an SFT-type invariant
would assign a group to each slice and a homomorphism to each
Lagrangian cobordism between the slices.  The generating family
capacities defined in this paper follow a similar structure: a
capacity assigns a real number to each cohomology class of a slice,
and Monotonicity implies that each Lagrangian cobordism gives rise to
a relation between capacities.  In fact, in the process of defining
the capacities, we will have associated filtered groups to each slice.
The maps used to prove Monotonicity will turn into filtered
homomorphisms between these groups.  This framework has possible
applications to detecting knotting phenomena in Lagrangian surfaces
with fixed boundary, a question parallel to recent progress in the
study of Lagrangian cobordisms between Legendrian knots
\cite{chantraine, ehk:leg-knot-lagr-cob}.

The remainder of the paper is organized as follows: after reviewing
the basics of the theory of generating families in Section
\ref{sec:background}, we construct and investigate the geometry of
generating families for slices of unknotted planar Lagrangians in
Section \ref{sec:gf-lc}.  We define the capacities for a slice in
Section \ref{sec:capacity} using the Morse theory of a ``difference
function'' associated to a generating family of a slice.  In Section
\ref{sec:properties}, we prove the properties of the capacities listed
in Theorem~\ref{thm:properties}.  In Section~\ref{sec:apps}, we apply the capacity framework to
prove
Theorems~\ref{thm:8cobord} -- \ref{thm:higher-dim8s} and Theorem~\ref{thm:ht-cap}
and  discuss how to
fit the objects used to define the capacities into the framework of a
field theory.

\subsection*{Acknowledgments}

We thank Yasha Eliashberg for the inspiration for the original project
and for fruitful conversations once the project was underway.

% ********************
\section{Background on Generating Families}
\label{sec:background}

As described above, we will develop capacities for slices of generic,
unknotted planar Lagrangians in $T^*(\rr^n)$.  The foundation of our
construction is that all such Lagrangians have essentially unique
generating families associated to them.

Given a function $f: M \to \rr$, the graph of its differential is a
Lagrangian submanifold of $T^*M$.  The idea of generating families is
to extend this construction by looking at real-valued functions on
$M \times \rr^N$, for some potentially large $N$.  Suppose that we
have a smooth function $F: M \times \rr^N \to \rr$ such that $0$ is a
regular value of the map $(\frac{\partial{F}}{\partial e_1}, \dots,
\frac{\partial{F}}{\partial e_N}) : M \times \rr^N \to \rr^N$.  We
define $\Sigma_F$, the \textbf{fiber critical locus of F}, to be the
$m$-submanifold
$$ \Sigma_F := \left\{ (x, \e) \in M \times \rr^N  : 
  \pd{F}{e_i} (x, \e) = 0, \text{for } i = 1,2, \dots, N\right\}.$$ Define an
immersion $i_F : \Sigma_F \to T^*M$ in local coordinates by
\begin{equation} \label{eqn: crit_inclusion} i_F(x, \mathbf{e}) =
  \left(x, \pd{F}{x}(x, \mathbf{e} ) \right),
\end{equation} 
and let $L$ be the image of $i_F$.  It is easy to check that $L$ is
indeed Lagrangian.  We say that $F$ \textbf{generates} $L$, or $F$ is
a \textbf{generating family (of functions)} for $L$.  Note that we can
write $L$ as:
$$L = \left\{ \left(x, \pd{F}{x}(x, \e) \right) : 
  \pd{F}{\e} (x, \e) = 0 \right\}.$$ 
 Although the Lagrangian generated by $F$ may in
general be immersed, we will start with an embedded
Lagrangian and show that there exists a generating family for it.
 
\begin{exam} \label{exam:cylinder_construct} Via the theory of
  generating families, we can explicitly construct a flat-at-infinity
  (and thus unknotted) planar Lagrangian in $\rr^4$ whose restriction
  gives the cobordisms between the curves in Figure
  \ref{fig:basic-slices}(a), (c).  To construct such a Lagrangian for
  the curves in (a), choose $0 < \varepsilon < \bot < \top < K$, and
  consider $F: \rr^2 \to \rr$ given by
  $$F(x_1, x_2) = \ell(x_2) q(x_1) - d(x_1) c(x_2),$$
  where $\ell, q, d$ and $c$ are compactly supported functions such that,
  on the square $|x_1|, |x_2| \leq \sqrt{K-\varepsilon}$,
  $$ \ell(x_2) = x_2, \quad q(x_1) = K - x_1^2, \quad d(x_1) = 1, 
  \quad c(x_2) = \frac13 x_2^3,$$
  and, on the complement of this square,
  $$ \ell'(x_2) < 1, \quad q(x_1) < \varepsilon, \quad d(x_1) < 1, \quad
  -c'(x_2) < \bot-\varepsilon .$$ Then $F$ generates the
  flat-at-infinity planar Lagrangian $L$ satisfying
  \begin{align*} L = \{ (x_1, y_1, x_2, y_2) : \quad &y_1 =
    \ell(x_2)q'(x_1) - d'(x_1) c(x_2), \\ &y_2 = \ell'(x_2) q(x_1) -
    d(x_1) c'(x_2) \ \},
  \end{align*} and it is easy to verify that we get the desired  
  $8^1_-$ curves as pictured in Figure
  \ref{fig:basic-slices}(a): for $\bot
  \leq a \leq \top$,
  \begin{align*}
   L \cap \{ y_2 = a \} &= 
  \{ (x_1, x_2, -2x_1x_2, a ) : x_1^2 + x_2^2 = K - a \}, \\
  &= i_a( 8^1_-(\sqrt{K-a})).
  \end{align*}

  To construct an unknotted planar Lagrangian between the
  $8^1_+$  curves in Figure \ref{fig:basic-slices}(c), simply consider
  the function $G = -F$, and the desired curves occur for $-\tau \leq a
  \leq -\beta$.  
  
  These constructions can be easily generalized to higher dimensions:
  we construct a Lagrangian of the form $\Gamma_{dF}$ where
  $F: \rr^{n} \to \rr$ is compactly supported and is
  of the form
    $$F(x_1, \dots, x_n) = \ell(x_n) q(x_1, \dots, x_{n-1}) - d(x_1, \dots, x_{n-1}) c(x_n).$$
  Now, on an $n$-dimensional cube, 
  \begin{align*}
   &\ell(x_n) = x_n, \quad q(x_1, \dots, x_{n-1}) = 
  K - x_1^2-\dots - x_{n-1}^2,  \\
  &d(x_1,\dots, x_{n-1}) = 1, 
  \quad c(x_n) = \frac13 x_n^3,
   \end{align*}
   and outside this cube, the functions $\ell, q, d, c$ satisfy analogous conditions
   to those given in the $n = 2$ case above.  Then $\Gamma_{dF}$ is flat-at-infinity and, for 
   $a > \beta$,  the $L_a$ slices  agree with $8^{n-1}_-(\sqrt{K-a})$.    Also note that
   $\Gamma_{dF}$ is unknotted: it is not hard to explicitly construct a
   symplectic isotopy taking  $\Gamma_{dF}$ to the zero section.     
  $\diamond$
\end{exam}

If there is a generating family for a given Lagrangian $L$ then it is
easy to see that it is not unique: if $F: M \times \rr^N \to \rr$ generates
$L$ then so does, for example, $F': M \times \rr^{N+1} \to \rr$, where
$F'(x, e_1, \dots, e_N, e_{N+1}) = F(x, e_1, \dots, e_N) +
e_{N+1}^2.$ This is the first of three basic operations on a
generating family that will not change the Lagrangian that is
produced.

\begin{defn} Two generating families $F_i: M \times \rr^{N_i} \to
  \rr$, $i= 1, 2$, are \textbf{equivalent} if they can be made equal
  after the operations of addition of a constant, fiber-preserving
  diffeomorphism, and stabilization, where these operations are
  defined as follows:
  \begin{enumerate}
  \item Given a generating family $F: M \times \rr^N \to \rr$, let $Q:
    \rr^K \to \rr$ be a non-degenerate quadratic function.  Define $F
    \oplus Q: M \times \rr^N \times \rr^K \to \rr$ by $F \oplus Q(x,
    \e, \e') = F(x, \e) + Q(\e')$.  Then $F \oplus Q$ is a
    \textbf{stabilization} of $F$.
  \item Given a generating family $F: M \times \rr^N \to \rr$ and a
    constant $C \in \rr$, $F' = F + C$ is said to be obtained from $F$ by
    \textbf{ addition of a constant}.
  \item Given a generating family $F: M \times \rr^N \to \rr$, suppose
    $\Phi: M \times \rr^N \to M \times \rr^N$ is a fiber-preserving
    diffeomorphism, i.e., $\Phi(x,\e) = (x, \phi_x(\e))$
    for diffeomorphisms $\phi_x$ . Then $F' = F \circ \Phi$ is said to be
    obtained from $F$ by a \textbf{fiber-preserving diffeomorphism}.
  \end{enumerate}
\end{defn}

By construction, these generating families are defined on non-compact
domains. Analytically, it is convenient to consider functions that are
well-behaved outside of a compact set. A common convention has been to
consider generating families $F: M \times \rr^N \to \rr$ that are
\textbf{ quadratic-at-infinity}. This means that outside of a compact
set in $M \times \rr^N$, $F(x,\e) = Q(\e)$, where $Q$ is a
non-degenerate quadratic function. See, for example, Viterbo
\cite{viterbo:generating} and Th\'eret \cite{theret:viterbo}. Another
useful concept is to consider generating families $F : M \times \rr^N
\times \rr^K \to \rr$ that are \textbf{linear-quadratic-at-infinity}:
this means that outside a compact set in $M \times \rr^N \times
\rr^K$, $F(x, \mathbf{l}, \e) = J(\mathbf{l}) + Q(\e)$, where $J$ is a
nonzero linear function of $\mathbf{l}$ and $Q$ is a non-degenerate
quadratic function of $\e$; see, for example, \cite{lisa-jill}.

The following theorem gives  valuable existence and uniqueness
results for the unknotted planar Lagrangians considered in this paper.
The existence portion was proved by Sikorav in \cite{sikorav:gen-fn}
via the ``broken geodesic method'' of Laudenbach and Sikorav
\cite{Laudenbach+Sikorav}; existence can also be proved by a formula
devised by Chekanov; see, for example, \cite{traynor:shomology}.  The
uniqueness portion was proved by Viterbo \cite{viterbo:generating}
with precise details given by Th\'eret \cite{theret:viterbo}.

\begin{thm} [Existence and Uniqueness of Generating
  Families] \label{thm:2n-exist+unique} For $t \in [0,1]$, let $L_t
  \subset T^*\rr^n$ be the image of the zero section, $L_0$, under a
  compactly supported Hamiltonian isotopy $\phi_t$ of $T^*\rr^n$.
  Then:
  \begin{enumerate}
  \item There exists a quadratic-at-infinity generating family for
    $L_t$: if $F$ is any quadratic-at-infinity generating family for
    $L_0$ then there exists a path of quadratic-at-infinity generating
    families $F_t: \rr^n \times \rr^N \to \rr$ for $L_t = \phi_t(L_0)$
    so that $F_0$ is a stabilization of $F$ and $F_t = F_0$ outside a
    compact set.
  \item Any two quadratic-at-infinity generating families for $L_t$
    are equivalent.
  \end{enumerate}
\end{thm}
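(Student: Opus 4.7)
The plan is to handle existence and uniqueness separately: existence via an explicit broken-trajectory construction in the spirit of Sikorav and Chekanov, and uniqueness via a Moser-type deformation in the fiber variables.

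For part (1), I would begin by subdividing $[0,1]$ as $0 = t_0 < t_1 < \dots < t_k = 1$ finely enough that each piece $\psi_i = \phi_{t_i} \circ \phi_{t_{i-1}}^{-1}$ is $C^1$-close to the identity on the support of the generating Hamiltonian $H_t$. In that regime, each $\psi_i$ admits a compactly supported generating function $S_i$ on $\rr^n \times \rr^n$ whose critical-point relation describes its graph. Starting from the prescribed quadratic-at-infinity generating family $F$ for $L_0$, one assembles a generating family $F_t$ for $\phi_t(L_0)$ by introducing auxiliary fiber variables $x_1, \dots, x_{k-1}$ at the intermediate breakpoints and writing a broken-trajectory formula of the schematic shape
\begin{equation*}
F_t(x; \e, x_1, \dots, x_{k-1}) \;\sim\; F(x_0, \e) + \sum_{i=1}^{k} S_i(x_{i-1}, x_i),
\end{equation*}
with the identification $x_k = x$. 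The fiber-critical equations $\partial F_t/\partial x_i = 0$ force the intermediate $x_i$ to trace a broken Hamiltonian trajectory, and a direct check shows $F_t$ generates $\phi_t(L_0)$. Because each $S_i$ vanishes outside a compact set, one restores the quadratic-at-infinity condition by inserting non-degenerate quadratic pieces in the new fiber variables; a fiber-preserving change of variables absorbs these, and smoothing in $t$ gives the desired continuous path, with $F_0$ a stabilization of the original $F$ and $F_t = F_0$ outside a compact set.

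For part (2), given two quadratic-at-infinity generating families $F$ and $G$ of the same Lagrangian $L_t$, I would apply part (1) along the inverse isotopy $\phi_s^{-1}$, $s \in [0,t]$, once starting from $F$ and once from $G$. This reduces the question to the case $L = L_0 =$ zero section and then further to proving that any smooth one-parameter family of quadratic-at-infinity generating families of a fixed Lagrangian is equivalent, by the three allowed operations, to a constant family. A Moser argument accomplishes this: one makes the ansatz $F_t = F_0 \circ \Phi_t + c(t)$ with $\Phi_t$ fiber-preserving, differentiates in $t$ to obtain a first-order equation for a time-dependent fiber-preserving vector field $X_t$, and integrates $X_t$ to produce $\Phi_t$ and the constant $c(t)$. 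A preliminary normalization step stabilizes both $F$ and $G$ by hyperbolic quadratic forms so that their quadratic parts at infinity have the same signature, which is the condition that makes the vector field $X_t$ compactly supported.

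The main obstacle is the uniqueness step, and specifically controlling the integrated diffeomorphism $\Phi_t$ at infinity. Existence is fairly flexible --- stabilization, smooth interpolation in $t$, and fiber changes of variables offer enough room to absorb any auxiliary structure introduced by the construction --- but uniqueness demands that the Moser vector field be compactly supported in the fiber direction, which in turn forces the quadratic parts at infinity of $F$ and $G$ to match up to hyperbolic stabilization. Once signatures have been equalized, a standard isotopy-extension argument on $\rr^n \times \rr^N$ produces the global fiber-preserving $\Phi_t$, as carried out in detail by Th\'eret, and the three allowed operations (stabilization, fiber-preserving diffeomorphism, and addition of a constant) then realize the required equivalence between $F$ and $G$.
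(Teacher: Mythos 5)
The paper does not prove Theorem~\ref{thm:2n-exist+unique} itself; it cites Sikorav and Laudenbach--Sikorav for the existence statement via the broken-geodesic method, and Viterbo and Th\'eret for uniqueness via a Moser/Serre-fibration argument in the fiber variables. Your proposal --- assembling $F_t$ by a broken-trajectory formula with auxiliary breakpoint variables for existence, and for uniqueness stabilizing to equalize the quadratic signatures at infinity and then running a Moser deformation to produce a fiber-preserving $\Phi_t$ and constant $c(t)$ --- outlines exactly the approach of those references, so it agrees with the argument the paper is relying on.
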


\begin{rem} \label{rem:gf-deform} In fact, the existence portion of
  Theorem~\ref{thm:2n-exist+unique} can be formulated more generally
  as a Serre fibration \cite{theret:viterbo} and applies to $L_t =
  \phi_t (L)$, where $L$ is any Lagrangian that has a
  quadratic-at-infinity generating family.
\end{rem}

\begin{rem} The results quoted above are actually stated for
  Hamiltonian isotopies of the zero-section of $T^*M$ where $M$ is
  compact.  Since we are dealing with compactly supported isotopies,
  we may think of our setting as $T^*S^n$.
\end{rem}

In our terminology, Theorem~\ref{thm:2n-exist+unique} says that any
unknotted planar Lagrangian in $\rr^{2n}$ has a unique (up to
equivalence) quadratic-at-infinity generating family associated to it.
In $\rr^4$, Theorem~\ref{thm:2n-exist+unique} applies to any
flat-at-infinity planar Lagrangian since Eliashberg and Polterovich
proved that such a Lagrangian must be unknotted \cite{ep:local-knots}.

 % ********************
\section{Generating Families and Difference Functions for Lagrangian
  Slices}
\label{sec:gf-lc}

In this section, we discuss generating families $F_a$ for slices
$L_a$; the Morse theory of the difference functions associated to such
generating families will be used in the next section to construct the
capacities.  The inspiration for the construction and use of a
difference function --- as opposed to Viterbo's direct use of the
generating family in his capacities --- comes from the second author's
study of two-component Legendrian links \cite{lisa:links}.  There, the
idea was to associate generating families $F_1(x, \e_1)$ and $F_2(x,
\e_2)$ to each component of the link and then study topological
invariants of the ``difference'' $\Delta$ of these generating
families, where $\Delta: M \times \rr^{N_1} \times \rr^{N_2} \to \rr$
is given by $\Delta(x, \e_1,\e_2) = F_1(x, \e_1) - F_2(x, \e_2)$.  The
critical points of the difference function pick out intersections of
the Lagrangian projections of the components of the link.  Here, we
will use a similar tactic to study the geometry of the slices $L_a$
using differences between $F_a$ and itself.

% **********
\subsection{Generating Families for Lagrangian Slices}
\label{ssec:gf-for-slice}

Let $L \subset \rr^{2n}$ be an unknotted planar Lagrangian.
By Theorem \ref{thm:2n-exist+unique}, ${L}$ has a quadratic-at-infinity
generating family $F: \rr^{n} \times \rr^N \to \rr$ that is unique up
to stabilization, fiber diffeomorphism, and an overall constant. Write
the coordinates on the domain of $F$ as $(\x, x_n, \e)$, where $\x \in
\rr^{n-1}$.

To study a slice $L_a$ of the Lagrangian, we first consider a new
generating family $F_a: \rr^{n-1} \times \rr^{1 + N} \to \rr$ given by
\begin{equation*} F_a(\x, x_n, \e) = F(\x, x_n, \e) - a\,x_n.  \label{eqn:F_a}
\end{equation*} 
For $F_a$, we are considering $x_n$ and $\e$ as fiber variables, and
so $F_a$ generates a Lagrangian in $T^*(\rr^{n-1}) = \rr^{2n-2}$ which
will, in fact, be a projection of our slice.  Let $\proj : \rr^{2n}
\to \rr^{2n-2}$ denote the projection $\proj(\x,x_n, \y, y_n) =
(\x,\y)$.

\begin{prop} If ${L}$ is transverse to the hypersurface $\{y_n =
  a\}$ then the function $F_a$ is a linear-quadratic-at-infinity
  generating family for the exact Lagrangian immersion $\proj(L_a)$.
\end{prop}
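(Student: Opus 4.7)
The plan is to verify three things in sequence: that $F_a$ meets the regularity condition required to be a generating family, that the Lagrangian it generates is exactly $\pi(L_a)$ realized as an exact Lagrangian immersion, and that outside a compact set $F_a$ has the linear-quadratic form. Since the fiber variables of $F_a$ are $(x_n,\e)$, a point $(\x,x_n,\e)$ lies in the fiber critical locus $\Sigma_{F_a}$ exactly when $\partial F/\partial x_n = a$ and $\partial F/\partial\e = 0$; composing with the diffeomorphism $i_F:\Sigma_F\to L$ coming from the original generating family, this set is in bijection with $L\cap\{y_n=a\}=L_a$, so set-theoretically $\Sigma_{F_a}$ is already what one wants. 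What remains is to see that it is cut out as a smooth manifold.

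This smoothness is the one nontrivial step. What must be checked is that $0$ is a regular value of
\[
\Psi(\x,x_n,\e) \;=\; \Bigl(\tfrac{\partial F}{\partial x_n} - a,\; \tfrac{\partial F}{\partial\e}\Bigr).
\]
The last $N$ rows of $D\Psi$ are already linearly independent at every point of $\Sigma_F$ because $F$ is itself a generating family; hence it suffices to show that along $\Sigma_{F_a}$ the remaining row $D(\partial F/\partial x_n)$ is not in their span, equivalently that its restriction to $\ker D(\partial F/\partial\e) = T_p\Sigma_F$ is nonzero. Under the diffeomorphism $i_F$, the function $\partial F/\partial x_n$ pulls back to the coordinate $y_n$ on $L$, so this is equivalent to $dy_n\bigl|_{T_{i_F(p)}L} \neq 0$ at every point of $L_a$, which is precisely the hypothesis $L\pitchfork\{y_n=a\}$. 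I expect this translation from transversality of $L$ into the regular-value condition on $\Psi$ to be the main point where care is needed.

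Once $F_a$ is a genuine generating family, the formula $i_{F_a}(\x,x_n,\e) = (\x,\partial F_a/\partial\x) = (\x,\partial F/\partial\x)$ identifies its image in $T^*\rr^{n-1}$ with $\{(\x,\y) : (\x,x_n,\y,a)\in L \text{ for some } x_n\} = \pi(L_a)$, which is automatically an exact Lagrangian immersion with primitive coming from $F_a$ restricted to $\Sigma_{F_a}$ (and need not be an embedding when two points of $L_a$ with different $x_n$-coordinates share the same $(\x,\y)$-projection). Finally, since $F$ is quadratic-at-infinity there is a compact set off of which $F(\x,x_n,\e)=Q(\e)$ for a non-degenerate quadratic $Q$; on the complement of that set $F_a(\x,x_n,\e)=-a\,x_n + Q(\e)$, which has the form $J(x_n)+Q(\e)$ with nonzero linear $J(x_n)=-a\,x_n$ (valid for $a\neq 0$, as is the case in the capacity theory), confirming the linear-quadratic-at-infinity condition.
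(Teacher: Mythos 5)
Your proof is correct and follows essentially the same approach as the paper's, with the welcome difference that you spell out the key regularity step (interpreting $\partial F/\partial x_n$ as $y_n\circ i_F$ on $\Sigma_F$ and reducing the regular-value condition to $dy_n|_{TL}\neq 0$) that the paper merely asserts. Your parenthetical caveat about needing $a\neq 0$ for the linear-quadratic-at-infinity claim is a real point that the paper passes over silently; it is justified in context since the capacity theory always works with $0\notin[a,b]$.
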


\begin{proof} To see that $F_a$ is linear-quadratic-at-infinity, note
  that since $F$ is equal to a non-degenerate quadratic $Q(\e)$ outside
  a compact set in $\rr^{n-1} \times \rr^{1+N}$, we have $F_a = -ax_n +
  Q(\e)$ there as well.  The fact  that $0$ is a regular value
  of $\left( \pd{F}{e_1}, \dots, \pd{F}{e_N} \right) : \rr^{n}
  \times \rr^{N} \to \rr^{N}$, and the hypothesis that  
  $L$ is transverse to $\{
  y_n = a \}$ will guarantee that $0$ is a regular value of $\left(
    \pd{F_a}{x_n}, \pd{F_a}{e_1}, \dots, \pd{F_a}{e_N} \right) : \rr^{n-1}
  \times \rr^{1+N} \to \rr^{1+N}$, and hence that $F$ generates \emph{some}
  Lagrangian.
To see that $F_a$ generates the claimed Lagrangian, we compute that
  $F_a$ generates the following set:
  \begin{equation*}
    \left\{ \left( \x, \pd{F_a}{\x} (\x,x_n,
        \e) \right) : \pd{F_a}{x_n} (\x, x_n, \e) = 0, 
      \pd{F_a}{\e} (\x, x_n, \e ) = 0 \right\}.
  \end{equation*}
  Rewriting this set using the fact that $F$ generates $L$ leaves us
  with:
  \begin{equation*}
    \{ (\x,\y ) : (\x, x_n, \y, a) \in L \},
  \end{equation*}
  which is simply the projection $\pi(L_a)$.
\end{proof}

\begin{rem} Since $\proj(L_a)$ is an exact Lagrangian immersion into
  $\rr^{2n-2}$, it lifts to an immersed Legendrian submanifold $\Lambda_a$ in
  $J^1(\rr^{n-1})$ with its usual contact structure.  
\end{rem}

% *********
\subsection{The Difference Function}
\label{ssec:diff-fn}  Define the \textbf{difference function}
\begin{equation*} \Delta_a: \rr^{n-1} \times \rr^{1+N} \times  \rr^{1+N} \to \rr
\end{equation*} by
\begin{equation*} \Delta_a(\x, x_n, \e, \tilde{x}_n,
  \tilde{\e}) = F_a(\x, x_n, \e) - F_a(\x,
  \tilde{x}_n, \tilde{\e}).
\end{equation*} 
We will see that, for generic $F$, $\Delta_a$ is a Morse-Bott
function.  	A {\bf generic Lagrangian} will be one that has a generating family
whose difference function satisfies a
Morse-Bott condition.  In this and the next section, we will
always be assuming that we are working with a generic
Lagrangian.  The results stated in Theorems~\ref{thm:8cobord} -- \ref{thm:higher-dim8s}
and Theorem~\ref{thm:ht-cap} will hold for arbitrary Lagrangian cobordisms
since any Lagrangian  can be perturbed into
a generic Lagrangian.

The capacities of a slice $L_a$ will be constructed using
the Morse theory of $\Delta_a$, so it is important to identify its
critical points.

\begin{lem} \label{lem:crit-pts} The critical points of $\Delta_a$ are
  of two types:
  \begin{enumerate}
  \item For each double point $(\x, \y)$ of $\proj(L_a)$, there are
    two critical points $(\x, x_n, \mathbf{e}, \tilde x_n, \tilde
    {\mathbf{e}} ) $ and $(\x, \tilde x_n, \tilde{\mathbf{e}}, x_n,
    \mathbf{e})$ whose critical values are either both $0$ or are $\pm
    v$, for some $v \neq 0$.  
  \item The set
    \begin{equation*} C_a = \bigl\{ (\x, x_n, \mathbf{e}, x_n,
      \mathbf{e})\; : \; (\x, x_n, \mathbf{e}) \in \Sigma_{F_a} \bigr\}
    \end{equation*} is a critical submanifold of
    $\Delta_a$ with critical value $0$.
  \end{enumerate}
  For generic $F$, these critical points and submanifolds are
  non-degenerate and $C_a$ has index $1 + N$. 
\end{lem}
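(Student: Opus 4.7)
The plan is to unpack the critical-point equations of $\Delta_a$, identify the two types of critical loci, and then compute Hessians in coordinates chosen to exploit the symmetries of $\Delta_a$. Bundling the fiber variables as $\xi = (x_n, \e) \in \rr^{1+N}$, the critical-point conditions $\partial_\x \Delta_a = \partial_\xi \Delta_a = \partial_{\tilde\xi} \Delta_a = 0$ read
\begin{equation*}
\partial_\x F_a(\x, \xi) = \partial_\x F_a(\x, \tilde\xi), \quad \partial_\xi F_a(\x, \xi) = 0, \quad \partial_\xi F_a(\x, \tilde\xi) = 0.
\end{equation*}
The last two equations say $(\x, \xi)$ and $(\x, \tilde\xi)$ both lie in $\Sigma_{F_a}$; combined with the first equation and the generating-family formula $i_{F_a}(\x, \xi) = (\x, \partial_\x F_a(\x, \xi))$, this means the two fiber-critical points map to the same point of $\pi(L_a)$. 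If the two points coincide, the critical point of $\Delta_a$ lies on $C_a$; otherwise they lie over a double point of $\pi(L_a)$, and the swap $(\xi, \tilde\xi) \leftrightarrow (\tilde\xi, \xi)$, under which $\Delta_a$ is antisymmetric, produces the second critical point with opposite critical value.

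For the isolated critical points over a double point, write $A_\ast, B_\ast, D_\ast$ for $\partial_\x^2 F_a$, $\partial_\x \partial_\xi F_a$, $\partial_\xi^2 F_a$ evaluated at the relevant sheet. Using $\partial_\xi \partial_{\tilde\xi} \Delta_a = 0$, the Hessian takes the block form
\begin{equation*}
\hess(\Delta_a) = \begin{pmatrix} A_\xi - A_{\tilde\xi} & B_\xi & -B_{\tilde\xi} \\ B_\xi^T & D_\xi & 0 \\ -B_{\tilde\xi}^T & 0 & -D_{\tilde\xi} \end{pmatrix}.
\end{equation*}
Invertibility of this matrix is equivalent to transversality of the two local sheets of $\pi(L_a)$ at the double point. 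After a small perturbation of $L$, and hence of $F$, all self-intersections of $\pi(L_a)$ become transverse, so the isolated critical points are non-degenerate for generic $F$.

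For the critical submanifold $C_a$, I change fiber variables to $\eta = \frac{1}{2}(\xi + \tilde\xi)$ and $\zeta = \frac{1}{2}(\xi - \tilde\xi)$, so $C_a$ is cut out by $\zeta = 0$ together with $(\x, \eta) \in \Sigma_{F_a}$. Taylor expanding $\Delta_a(\x_0 + u, \eta_0 + v, w)$ at a base point $(\x_0, \eta_0, 0) \in C_a$, the even-in-$\zeta$ terms cancel identically between $F_a(\x, \eta + \zeta)$ and $F_a(\x, \eta - \zeta)$; using $\partial_\xi F_a(\x_0, \eta_0) = 0$, the quadratic part collapses to
\begin{equation*}
Q(u, v, w) = 2\, u^T B\, w + 2\, v^T D\, w, \quad B = \partial_\x \partial_\xi F_a, \quad D = \partial_\xi^2 F_a.
\end{equation*}
The regular-value hypothesis on $\partial_\xi F_a$ says the matrix $(B^T \mid D)$ has full rank $1+N$, which forces $w = 0$ in the kernel of $Q$; the remaining equation $B^T u + D v = 0$ cuts out $T_p \Sigma_{F_a} = T_p C_a$, so $C_a$ is Morse--Bott. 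Setting $s = B^T u + D v$ identifies the normal bundle with $\rr^{1+N} \oplus \rr^{1+N}$, on which $Q$ becomes the hyperbolic form $2\, s^T w$, of signature $(1+N, 1+N)$. The Morse--Bott index is therefore $1+N$.

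The main obstacle is the Hessian analysis at $C_a$: written directly in $(\x, \xi, \tilde\xi)$ coordinates the matrix is not of obvious rank or signature, and the symmetric/antisymmetric change of variables --- exploiting the anti-involution $(\xi, \tilde\xi) \mapsto (\tilde\xi, \xi)$ under which $\Delta_a$ is odd --- is needed to reveal the hyperbolic pairing between the normal directions. The double-point case, by contrast, is a standard transversality argument for generic Lagrangian immersions.
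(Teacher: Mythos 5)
Your argument is correct, and for the Morse--Bott analysis of the critical submanifold $C_a$ it takes a genuinely different route than the paper. The paper first applies a fiber-preserving diffeomorphism to put $F_a$ locally in the split form $g(\x)+h(x_n,\e)$, so that $d^2\Delta_a$ becomes block-diagonal with blocks $d^2g-d^2\tilde g$, $A=d^2h$, and $-\tilde A$; the kernel is then read off as $\{[\mathbf{v},\mathbf{0},\mathbf{0}]\}$ and matched with $TC_a$. You instead leave $F_a$ untouched and pass to the symmetric/antisymmetric coordinates $(\eta,\zeta)=\bigl(\tfrac12(\xi+\tilde\xi),\tfrac12(\xi-\tilde\xi)\bigr)$, which exhibits the normal Hessian along $C_a$ directly as the hyperbolic pairing $2\,s^Tw$ with $s=B^Tu+Dv$; this delivers Morse--Bott non-degeneracy and the index $1+N$ in a single computation. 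Your route buys something concrete: the paper's block-diagonalization implicitly needs the fiber Hessian $D=\partial_\xi^2F_a$ to be invertible at the base point in order for the parametrized splitting lemma to produce $g(\x)+h(\xi)$ with no cross term, and $D$ does degenerate along $C_a$ at fold points of $\Sigma_{F_a}$ over the front of $\pi(L_a)$. Your argument uses only the full-rank condition on $(B^T\mid D)$, which is part of the definition of a generating family and holds everywhere on $\Sigma_{F_a}$, so it covers all of $C_a$ without that restriction. For the isolated critical points over double points the two arguments are essentially equivalent --- both reduce non-degeneracy of $d^2\Delta_a$ to transverse intersection of the two sheets of $\pi(L_a)$, which holds for generic $F$ --- and your treatment of the antisymmetry $(\xi,\tilde\xi)\mapsto(\tilde\xi,\xi)$ to produce the pair of critical values $\pm v$ matches the paper's.
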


\begin{proof} It is straightforward to calculate that at a critical
  point $(\x, x_n, \e, \tilde{x}_n, \tilde{\e})$ of $\Delta_a$, the
  points $(\x, x_n, \e)$ and $(\x, \tilde{x}_n, \tilde{\e})$ both lie
  in $\Sigma_F$.  
%	  \footnote{In more detail, to find the critical points
%	    $(\x, x_n, \e, \tilde{x}_n, \tilde{\e})$ of $\Delta_a$, notice that
%	    \begin{align} \pd{\Delta_a}{\e}(\x, x_n, \e, \tilde{x}_n,
%	      \tilde{\e}) = 0 &\iff (\x, x_n, \e) \in \Sigma_F \\
%	      \pd{\Delta_a}{\tilde\e}(\x, x_n, \e, \tilde{x}_n, \tilde{\e}) = 0
%	      &\iff (\x, \tilde{x}_n, \tilde\e) \in \Sigma_F
%	    \end{align}} 
    Further, their images in $L$ both lie in $L_a$ and
  have the same $\y$ coordinate.
%	  \footnote{This is because:
%	    \begin{align} \pd{\Delta_a}{x_n} = 0 &\iff \pd{F}{x_n} =
%	      a, \label{eqn:d-delta-1}\\ \pd{\Delta_a}{\tilde{x}_n} = 0 &\iff
%	      \pd{F}{\tilde{x}_n} = a,
%	      \label{eqn:d-delta-2} \\ \pd{\Delta_a}{\x} = 0 &\iff
%	      \pd{F}{\x} (\x, x_n, \e) = \pd{F}{\x} (\x, \tilde{x}_n,
%	      \tilde{\e}).  \label{eqn:d-delta-3}
%	    \end{align} Equations (\ref{eqn:d-delta-1}) and
%	    (\ref{eqn:d-delta-2}) tell us that the aforementioned pair of points
%	    in $L$ has $y_n = a$ and $\tilde{y}_n = a$;
%	    Equation~(\ref{eqn:d-delta-3}) tells us that for this pair of points
%	    $\y = \tilde{y}_1$.}  
    Thus, the critical points of $\Delta_a$
  correspond to pairs of points in $L$ of the form $(\x, x_n, \y, a)$
  and $(\x, \tilde{x}_n, \y, a)$.  The existence of the two types of
  critical points asserted in the lemma follows: a set coming from
  double points of the projection $\proj(L_a)$, and a set $C_a$ coming
  from any point in $L_a$ paired with itself.

  To understand the non-degeneracy claim, note that if $(\x, x_n, \e,
  \tilde x_n, \tilde \e)$ is a critical point of $\Delta_a$ then $(\x,
  x_n, \e), (\x, \tilde x_n, \tilde \e) \in \Sigma_{F_a}$.  So, after
  a fiber-preserving diffeomorphism of $\rr^{n-1} \times \rr^{1+N}$,
  we may assume that in a neighborhood of a critical point $(\x, x_n,
  \e, \tilde x_n, \tilde \e)$ of $\Delta_a$, there exist functions
  $g(\x)$, $h(x_n,\e)$, $\tilde g(\x)$, and $\tilde h(\tilde x_n,
  \tilde \e)$ such that
  $$F_a(\x, x_n, \e) =g(\x) + h(x_n, \e), \quad 
  F_a(\x, \tilde x_n, \tilde \e) =\tilde g(\x) + \tilde h(\tilde x_n,
  \tilde \e).$$ If $A$ (resp.\ $\tilde{A}$) is the Hessian $d^2h$ at
  $(x_n, \e)$ (resp.\ $d^2\tilde h$ at $(\tilde{x}_n, \tilde{\e})$)
  then:
  \begin{equation}
    \label{eqn:hessian-delta} 
    d^2\Delta_a (\x, x_n, \e, \tilde x_n, \tilde \e)= \begin{bmatrix}  
      d^2g - d^2 \tilde{g}(\x) & \mathbf{0} & \mathbf{0} \\ \mathbf{0} 
      & A(x_n, \e) & \mathbf{0} \\ \mathbf{0} & \mathbf{0} & 
      -\tilde{A}(\tilde x_n, \tilde \e)
    \end{bmatrix}.
  \end{equation} Since $F_a$ is a generating family,
  $A$ and $\tilde{A}$ are non-degenerate.  At an isolated
  critical point, for generic $F$, the upper left entry is non-degenerate, so
  $d^2\Delta_a$ is non-degenerate.  For a point in $C_a$, the kernel of
  $d^2\Delta_a$ is spanned by vectors of the form $[\mathbf{v},
  \mathbf{0}, \mathbf{0}]$, which
  coincides with $TC_a$;
%	  \footnote{This follows from the fact that
%	    $\partial_{\e} F_a = \partial_{\e} h$, which is independent of $\x$,
%	    and hence $C_a$ lies entirely in the $\x$ direction.}  
  the non-degeneracy follows, as does the claim about the index of the
  critical
  submanifold.% since $-\tilde A(\tilde x_n, \tilde \e) = - A(x_n, \e)$.
\end{proof}

\begin{rem} \label{rem:l-diff}
  We can similarly define a difference function for the generating
  family $F$ for $L$ by:
  \begin{equation*}
    \delta(\x, x_n,\e, \tilde{\e}) = F(\x, x_n,\e) - F(\x, x_n, \tilde{\e}).
  \end{equation*}
  By the same proof as above, the assumption that $L$ is embedded
  implies that all critical points of  $\delta$ have critical value zero.
\end{rem}

\begin{exam} \label{exam:explicit_crit_values} For the generating
  family $F: \rr^2 \to \rr$ constructed in Example
  \ref{exam:cylinder_construct}, it is straightforward to check that
  for $ \bot < a < \top$, all critical points of $\Delta_a$ occur when
  $|x_1|, |x_2|, |\tilde x_2| < \sqrt {K - \varepsilon}$.  In this
  region, we have $F(x_1, x_2) \equiv x_2(K-x_1^2) - \frac13x_2^3$, so
  the isoloated critical points are:
  \begin{align*}
    q_- &= \left(0, \sqrt{K-a}, -\sqrt{K-a}\right), & \text{and} \\
    q_+ &= \left(0, -\sqrt{K-a}, \sqrt{K-a}\right),
  \end{align*}
  while the critical submanifold is given by $\bigl(s, t, t\bigr)$,
  where $s^2 + t^2 = K - a$.  Using the explicit definition of $F$, we
  find that the critical values are:
  $$\Delta_a(q_-) = \frac43(K-a)^{\frac32}> 0, \quad \Delta_a(q_+) = -\frac43(K-a)^{\frac32} < 0,$$
  while the indices are given by
  $$ \ind(q_-) = 3, \quad \ind(q_+) = 0.$$
  Furthermore, it can be explicitly seen that $C_a = \{ (s, t, t ) :
  s^2 + t^2 = K-a \}$ is a non-degenerate critical submanifold of
  critical value $0$ and index $1$.  $\diamond$
\end{exam}

% **********
\subsection{Morse Theory for $\Delta_a$ on Split Domains}

In order to detect information about the sign of the crossings
$\proj(L_a)$, the capacities will be defined using intersections of
the sublevel sets of $\Delta_a$ with ``positive'' and ``negative''
half-spaces of the domain. Since the boundary between these
half-spaces is not a level set of $\Delta_a$, we cannot directly use
the usual Morse-theoretic constructions, but we shall see that certain
techniques still work.

To set notation, denote the sublevel sets of $\Delta_a$ by:
\begin{equation*} \Delta_a^\lambda = \bigl\{ (\x, x_n, \mathbf{e},
    \tilde{x}_n, \tilde{\mathbf{e}}) \in \rr^{n-1} \times
  \rr^{1+N} \times \rr^{1+N} \; : \; \Delta_a(\x, x_n, \mathbf{e},
    \tilde{x}_n, \tilde{\mathbf{e}}) \leq \lambda
  \bigr\}.
\end{equation*} 
The distinction between positive and negative capacities comes from a
splitting of the domain $\rr^{n-1} \times \rr^{1+N} \times \rr^{1+N}$ into
positive and negative pieces as follows:
\begin{equation*}
  \begin{split} \mathcal{P}_+ &= \bigl\{ (\x, x_n, \mathbf{e},
    \tilde{x}_n, \tilde{\mathbf{e}})\; : \; x_n \leq  \tilde{x}_n
    \bigr\}, \\ \mathcal{P}_- &= \bigl\{ (\x, x_n, \mathbf{e},
    \tilde{x}_n, \tilde{\mathbf{e}})\; : \; x_n \geq  \tilde{x}_n
    \bigr\}.
  \end{split}
\end{equation*} 
We divide sublevel sets into positive and negative pieces as well:
\begin{equation} \Delta_{a,\pm}^\lambda = \Delta_a^\lambda \cap
  \mathcal{P}_\pm. \label{eqn:split-sublevels}
\end{equation} 
The motivation for this splitting comes from the fact that
\begin{equation}   \Delta_a(\mathbf{x}) -
  \Delta_b(\mathbf{x}) = (a-b)(\tilde{x}_n - x_n).
  \label{eqn:difference}
\end{equation} In particular, on $\mathcal{P}_+$, $a < b$ implies
$\Delta_b \geq \Delta_a$, and thus $\Delta_b^\lambda \subset
\Delta_a^\lambda$; while on $\mathcal{P}_-$, $a < b$ implies
$\Delta_a^\lambda \subset \Delta_b^\lambda$.  Thus, for $a < b$ and
for any $\lambda$, we have
\begin{equation} \Delta_{b,+}^\lambda \subset \Delta_{a, +}^\lambda
  \text{ and } \Delta_{a,-}^\lambda \subset \Delta_{b,
    -}^\lambda.  \label{eqn:difference-inclusions}
\end{equation} This type of containment will be instrumental in the
proof of Monotonicity for the capacities.

The intersections of sublevel sets with half-spaces obey a central
Morse-theoretic lemma: if there is no critical value of $\Delta_a$ in
$[\sigma, \tau]$ then each half of the sublevel set at $\sigma$ is a
deformation retract of the corresponding half of the sublevel set at
$\tau$. The proof of the following lemma relies crucially on the
assumption that $L$ is embedded.

\begin{lem} \label{lem:def-retr} If there are no critical points of
  $\Delta_a$ in $\mathcal{P}_+$ with critical value in the interval
  $[\sigma, \tau]$ then $\Delta^\sigma_{a, +}$ is a deformation
  retract of $\Delta^\tau_{a,+}$.  A similar statement holds with
  ``$+$'' replaced by ``$-$''.
\end{lem}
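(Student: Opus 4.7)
The plan is to adapt the classical Morse deformation argument: I would build a smooth vector field $V$ on $\mathcal{P}_+$ that is tangent to $\partial\mathcal{P}_+$ and satisfies $V(\Delta_a) < 0$ on the slab $\{\sigma \leq \Delta_a \leq \tau\} \cap \mathcal{P}_+$; the flow of a suitably normalized and cutoff version of $V$ will then produce the deformation retract. The novelty beyond the standard setup is that $\mathcal{P}_+$ has a boundary, so near $\partial\mathcal{P}_+$ the naive gradient flow would push points out of $\mathcal{P}_+$ and must be replaced by a boundary-tangent substitute.

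First, I would catalogue the critical behavior in the slab. By Lemma~\ref{lem:crit-pts}, the critical submanifold $C_a$ lies in $\{x_n = \tilde{x}_n\} = \partial\mathcal{P}_+ \subset \mathcal{P}_+$ and has critical value $0$, so the hypothesis immediately forces $0 \notin [\sigma,\tau]$; the isolated critical points of $\Delta_a$ inside $\mathcal{P}_+$ are also excluded from the slab by hypothesis. Parametrizing $\partial\mathcal{P}_+$ by $(\x, s, \e, \tilde{\e})$ with $s = x_n = \tilde{x}_n$, the $-ax_n$ and $+a\tilde{x}_n$ terms in $\Delta_a$ cancel on the boundary, so $\Delta_a|_{\partial\mathcal{P}_+}$ agrees with the difference function $\delta$ of Remark~\ref{rem:l-diff}. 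Since $L$ is embedded, every critical point of $\delta$ has value $0$, so $\Delta_a|_{\partial\mathcal{P}_+}$ also has no critical points with values in $[\sigma,\tau]$. I would then construct $V$ by interpolating, via a smooth cutoff $\chi$ of the boundary-defining function $\tilde{x}_n - x_n$, between $-\nabla\Delta_a$ in the interior and its orthogonal projection $-\nabla^T\Delta_a$ onto $T\partial\mathcal{P}_+$ on the boundary, so that $V(\Delta_a) = -\chi|\nabla\Delta_a|^2 - (1-\chi)|\nabla^T\Delta_a|^2$. A routine computation shows that $\nabla^T\Delta_a$ vanishes at a boundary point exactly when that point corresponds to a critical point of $\delta$, so on the slab both summands are nonpositive and at least one is strictly negative, yielding $V(\Delta_a) < 0$.

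Finally, after rescaling $V$ by $|V(\Delta_a)|^{-1}$ on an open neighborhood of the slab and multiplying by a bump function that vanishes below level $\sigma$, the quadratic-at-infinity form of $F_a$ provides the lower bound on $|\nabla\Delta_a|$ at infinity needed to make the rescaled vector field complete; flowing for time $\tau - \sigma$ then yields the desired deformation retract of $\Delta^\tau_{a,+}$ onto $\Delta^\sigma_{a,+}$ that fixes the latter pointwise. The argument for $\mathcal{P}_-$ is identical after interchanging $x_n$ and $\tilde{x}_n$. The main obstacle is ensuring that the boundary-tangent modification of the gradient still strictly decreases $\Delta_a$ on the slab: this is precisely where the embeddedness of $L$ enters, via Remark~\ref{rem:l-diff}, since without it the critical values of $\delta$ could spread across $[\sigma,\tau]$ and the projected flow would stall on $\partial\mathcal{P}_+$.
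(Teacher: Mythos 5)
Your construction coincides with the paper's: after rewriting, your field $V = -\chi\nabla\Delta_a - (1-\chi)\nabla^T\Delta_a$ equals $-X_T - \chi X_N$, where $X_T,X_N$ are the tangential and normal components of $\nabla\Delta_a$ along $\mathcal{P}_0 = \partial\mathcal{P}_+$, which is exactly the (negative of the) vector field the paper uses, and your appeal to the embeddedness of $L$ through Remark~\ref{rem:l-diff} to rule out boundary critical values in $[\sigma,\tau]$ is the paper's key step as well. The one spot where you are looser is the uniform bound needed on the noncompact slab: you establish pointwise $V(\Delta_a)<0$ and then cite a lower bound on $|\nabla\Delta_a|$ at infinity, but near $\partial\mathcal{P}_+$ at infinity what you actually need is a lower bound on $|\nabla^T\Delta_a| = |\nabla\delta|$ (which does hold, since $\delta$ is also a difference of quadratics at infinity), together with a choice of cutoff width that preserves this bound off the boundary -- this is precisely what the paper's explicit ``$\|\nabla\delta\|^2\geq 2r$, then choose $\epsilon$ so $\|X_T\|^2\geq r$'' step supplies.
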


\begin{proof} Let $ \mathcal{P}_0 = \{ (\x, x_n, \mathbf{e},
  \tilde{x}_n, \tilde{\mathbf{e}}) : x_n = \tilde{x}_n\}.$ The idea of
  the proof is to modify the gradient of $\Delta_a$ to an integrable
  vector field $X$ such that:
  \begin{enumerate}
  \item The derivative $X(\Delta_a)$ is positive and uniformly bounded
    away from $0$ on $\Delta_a^{-1}([\sigma, \tau])$, and
  \item $X$ is tangent to $\mathcal{P}_0$ (and hence its flow
    preserves $\mathcal{P}_\pm$).
  \end{enumerate} Following the negative flow of such a vector field
  clearly gives the desired deformation retract.

  To construct the vector field $X$, first decompose $\nabla \Delta_a$
  into a component $X_T$ parallel to $\mathcal{P}_0$ and a component
  $X_N$ normal to $\mathcal{P}_0$.  To define $X$, let
  $\beta_\epsilon(t)$ be a smooth nonnegative function that is equal
  to $1$ outside of $(-\epsilon, \epsilon)$ for some positive
  $\epsilon$ to be chosen, does not exceed $1$, and vanishes at $0$.
  Then let:
  \begin{equation*} X = X_T + \beta_\epsilon(\tilde{x}_n - x_n) X_N.
  \end{equation*}

  It is clear by construction that $X$ is tangent to $\mathcal{P}_0$.
  To prove that $X(\Delta_a)$ is uniformly bounded away from $0$ on
  $\Delta_a^{-1}([\sigma, \tau])$, split the domain into an
  $\epsilon$-neighborhood of $\mathcal{P}_0$ and its complement. We
  will show that on each piece, there exists an $r>0$ such that
  $X(\Delta_a) \geq r$.

  First consider an $\epsilon$-neighborhood of $\mathcal{P}_0$, with
  $\epsilon$ still to be determined. It is easy to check that the
  restriction of $\Delta_a$ to $\mathcal{P}_0$ is the difference
  function $\delta$ for the original Lagrangian $L$ as in Remark~\ref{rem:l-diff}. 
   Thus, along
  $\mathcal{P}_0$, we have $X_T = \nabla \delta$.  The embeddedness of
  $L$ implies that $\delta$ has no critical values in $[\sigma,
  \tau]$.  Further, since $\delta$ is the difference of two functions
  that are quadratic-at-infinity, it is straightforward to prove that
  there is an $r>0$ such that $\nabla \delta(\Delta_a) = \|\nabla
  \delta \|^2 \geq 2r$ on $\delta^{-1}([\sigma, \tau])$.
%	  \footnote{To see this, suppose that
%	    $F(\x, x_n, e) = Q(e)$ outside a compact set $K_x \times K_e$
%	    containing the origin.  To prove $\|\nabla \delta \|^2 \geq 2r$,
%	    we split $\delta^{-1}([\sigma, \tau])$ into three pieces: its
%	    intersection with $K_x \times K_e \times K_e$, its intersection
%	    with $(\rr^2 \setminus K_x) \times K_e \times K_e$, and everything
%	    else.  In the first case, since $\delta$ has no critical points
%	    and the set in question is compact, $\|\nabla \delta \|^2$ is
%	    uniformly bounded below.  In the second case, we have $\delta(\x,
%	    x_n, e, \tilde{e}) = Q(e) - Q(\tilde{e})$, so
%	    \begin{align} \| \nabla \delta \|^2 &= \| \nabla Q(e) \|^2 + \|
%	      \nabla Q(\tilde{e}) \| ^2 \\ &\geq C (\|e\|^2 +
%	      \|\tilde{e}\|^2) \label{eqn:Q-bound}
%	    \end{align} for some $C >0$ since $Q$ is non-degenerate.  Since
%	    $\delta(\x,x_n,e,\tilde{e}) \in [\sigma, \tau]$, at least one of $e$
%	    and $\tilde{e}$ must lie a bounded distance away from $0$ in the set
%	    under consideration, and hence $\| \nabla \delta \|^2$ is also bounded
%	    away from $0$.  In the final case, at least one of the terms in
%	    $\delta$ is equal to $Q(e)$ or $Q(\tilde{e})$ for $e$ or $\tilde{e}$
%	    bounded away from $0$, so by (\ref{eqn:Q-bound}), $\| \nabla \delta
%	    \|^2$ is bounded away from $0$ on this set as well.} 
    Choose
  $\epsilon>0$ so that $\| X_T \|^2 \geq r$ on an
  $\epsilon$-neighborhood of $\mathcal{P}_0$; this immediately implies
  that $\| X \|^2 \geq r$ there as well.
Outside that neighborhood, we have $X(\Delta_a) = \|\nabla
  \Delta_a\|^2$.  As with $\delta$, since there are no critical values
  of $\Delta_a$ in $[\sigma, \tau]$ and $\Delta_a$ is the difference of
  two functions that are linear-quadratic at infinity, it is
  straightforward to prove that there is an $r>0$ such that $\|\nabla
  \Delta_a \|^2 \geq 2r$ on $\Delta_a^{-1}([\sigma, \tau])$.
%	  \footnote{In
%	    fact, the proof is easier than for $\delta$.  On the compact set
%	    associated with the linear-quadratic property of $\Delta_a$, $\|\nabla
%	    \Delta_a \|^2$ is bounded away from $0$ by hypothesis.  Outside that
%	    compact set, at least one of $a x_n$ or $a \tilde{x}_n$ appears as the
%	    only instance of $x_n$ or $\tilde{x}_n$ in the expression of
%	    $\Delta_a$, and hence $\|\nabla \Delta_a \|^2 \geq a^2$.  }
\end{proof}

% ********************
\section{Capacities for Lagrangian Slices}
\label{sec:capacity}

The goal of this section is to associate to a generic slice $L_a$  
and each element $u \in H^k(L_a)$
 four real numbers $c_\pm$,
$C_\pm$.
These capacities will be defined using
Morse-theoretic techniques applied to the split sublevel sets
$\Delta_{a,\pm}^\lambda$ and will satisfy the five properties listed in
Theorem~\ref{thm:properties}.

% **********
\subsection{Definition of the Capacities}
\label{sec:defn-cap}

To define the capacities, we will need to examine maps between the
relative cohomology groups of the sublevel sets $\Delta^\lambda_{a,
  \pm}$ and the cohomology groups of $L_a$.  As has been the
convention, assume that the domain of $\Delta_a$ is $\rr^{n-1} \times
\rr^{1+N} \times \rr^{1+N}$.  At some point, we will need to assume
that $ N \geq n-1$, a condition that can be guaranteed by
stabilization.  Suppose throughout this section that $\lambda < -\eta
< 0 < \eta < \Lambda$ and that $\eta$ has been chosen small enough so
that $0$ is the only critical value of the difference function
$\Delta_a$ in $[-\eta, \eta]$.  By Lemma~\ref{lem:def-retr}, the
precise choice of $\eta$ is immaterial.  For all $k \in \zz_{\geq 0}$, we will
define maps $i_\pm^*$, $p_a^\lambda$, $D_a^\Lambda$, and $\mathcal I$
whose domains and ranges are related as follows:
\begin{equation*} \label{eqn:cap-maps}
  \xymatrix@C=0pt{
    & H^k(L_a) \ar[d]^{\mathcal{I}} & \\
    & H^{k+N+1}(\Delta_a^{\eta},
    \Delta_a^{-\eta}) \ar[dl]_{p_a^\lambda} \ar[dr]^{D_a^\Lambda} & \\
    H^{k+N+1}(\Delta_a^\eta, \Delta_a^\lambda) \ar[d]^-{i^*_\pm} & & 
    H^{k+N+2}(\Delta_a^{\Lambda}, \Delta_a^{\eta})
    \ar[d]^-{i_{\pm}^*} \\
    H^{k+N+1}(\Delta_{a, \pm}^\eta, \Delta_{a, \pm}^\lambda) & &
    H^{k+N+2}(\Delta_{a,\pm}^{\Lambda},
    \Delta_{a,\pm}^{\eta}).
  }
\end{equation*}

The maps $i_\pm^*$ come from a Mayer-Vietoris sequence.  One reason
that the splitting using $\mathcal{P}_\pm$ is nice is that
the relative cohomology of two sublevel sets is completely determined
by the relative cohomologies of their splittings.

\begin{lem} \label{lem:m-v-splitting} Suppose that $0 \notin
  [\sigma, \tau]$.  Then, for any $k \in \zz_{\geq 0}$, the natural inclusions
  induce an isomorphism:
  \begin{equation*} (i_+^*, i_-^*): H^k(\Delta_{a}^\tau,
    \Delta_{a}^\sigma) \to H^k(\Delta_{a,+}^\tau, \Delta_{a,+}^\sigma)
    \oplus H^k(\Delta_{a,-}^\tau, \Delta_{a,-}^\sigma).
  \end{equation*} 
\end{lem}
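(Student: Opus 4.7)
The plan is to apply a relative Mayer--Vietoris argument to the closed cover $\Delta_a^\tau = \Delta_{a,+}^\tau \cup \Delta_{a,-}^\tau$ (and similarly at level $\sigma$), reducing the intersection piece to a sublevel set of the difference function $\delta$ for $L$ introduced in Remark~\ref{rem:l-diff}. The key observation is that on $\mathcal{P}_0 := \mathcal{P}_+ \cap \mathcal{P}_- = \{x_n = \tilde{x}_n\}$, the terms $-ax_n$ and $+a\tilde{x}_n$ cancel, so $\Delta_a$ restricts, under the identification $(\x, x_n, \e, x_n, \tilde{\e}) \leftrightarrow (\x, x_n, \e, \tilde{\e})$, to exactly $\delta$. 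Consequently
\[
\Delta_{a,+}^\lambda \cap \Delta_{a,-}^\lambda \cong \delta^\lambda
\]
for every $\lambda$, so the intersection terms appearing in Mayer--Vietoris are relative cohomologies of $\delta$.

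Next, I would invoke the embeddedness of $L$, which by Remark~\ref{rem:l-diff} forces every critical value of $\delta$ to equal $0$. Since by hypothesis $0 \notin [\sigma, \tau]$, $\delta$ has no critical values in this interval, and a standard gradient-flow argument (in the spirit of Lemma~\ref{lem:def-retr}, but simpler because no splitting needs to be preserved) shows that $\delta^\sigma$ is a deformation retract of $\delta^\tau$. Hence $H^k(\delta^\tau, \delta^\sigma) = 0$ for every $k$.

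With this vanishing in hand, the isomorphism will fall out of the relative Mayer--Vietoris long exact sequence of the triads $(\Delta_a^\bullet; \Delta_{a,+}^\bullet, \Delta_{a,-}^\bullet)$ at levels $\tau$ and $\sigma$:
\[
\cdots \to H^{k-1}(\delta^\tau, \delta^\sigma) \to H^k(\Delta_a^\tau, \Delta_a^\sigma) \xrightarrow{(i_+^*,\, i_-^*)} H^k(\Delta_{a,+}^\tau, \Delta_{a,+}^\sigma) \oplus H^k(\Delta_{a,-}^\tau, \Delta_{a,-}^\sigma) \to H^k(\delta^\tau, \delta^\sigma) \to \cdots
\]
Both flanking terms vanish by the previous paragraph, so $(i_+^*, i_-^*)$ is an isomorphism.

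The main obstacle I anticipate is that $\mathcal{P}_+$ and $\mathcal{P}_-$ are closed half-spaces whose interiors do not cover $\rr^{n-1} \times \rr^{1+N} \times \rr^{1+N}$, so Mayer--Vietoris is not immediately available. I would handle this by the standard thickening trick: replace $\mathcal{P}_\pm$ by the open neighborhoods $\{\tilde{x}_n - x_n > -\epsilon\}$ and $\{\tilde{x}_n - x_n < \epsilon\}$, which deformation retract linearly onto $\mathcal{P}_\pm$ (and whose intersection retracts onto $\mathcal{P}_0$). This produces an excisive open cover without altering any of the relative cohomology groups involved, legitimizing the Mayer--Vietoris sequence written above.
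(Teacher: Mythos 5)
Your argument is correct and follows essentially the same route as the paper: both identify the intersection $\Delta_{a,+}^\lambda \cap \Delta_{a,-}^\lambda$ with a sublevel set of the difference function $\delta$ of Remark~\ref{rem:l-diff}, use the embeddedness of $L$ to conclude that $\delta$ has no nonzero critical values so that $\Delta_{a,0}^\sigma$ deformation retracts onto $\Delta_{a,0}^\tau$, and then conclude via the relative Mayer--Vietoris sequence. Your additional care with the open-cover/thickening issue is a nice touch but does not change the strategy.
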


\begin{proof} This will follow from a Mayer-Vietoris argument.  As in
  the proof of Lemma~\ref{lem:def-retr}, let $\mathcal{P}_0 = \{ (\x,
  x_n, \mathbf{e}, \tilde{x}_n, \tilde{\mathbf{e}}) : x_n =
  \tilde{x}_n\}$ and let $\Delta_{a,0}^\lambda = \Delta_{a,+}^\lambda
  \cap \Delta_{a,-}^\lambda$; that is, $\Delta_{a,0}^\lambda =
  \Delta_a^\lambda \cap \mathcal P_0$.
  % \footnote{Since $\mathcal{P}_0$ is always transverse to
  %   $\operatorname{Int} \Delta^\lambda_a$ and to its boundary,
  %   $\partial \Delta^\lambda_a$, for generic $a$, $\mathcal P_0$ has
  %   a proper neighborhood in the manifold-with-boundary
  %   $\Delta^\lambda_a$.  Thus, $\mathcal P_0$ is a neighborhood
  %   deformation retract, and we can substitute the $\pm,0$ parts for
  %   neighborhoods thereof.}
  It suffices to prove that $H^k(\Delta_{a,0}^\tau,
  \Delta_{a,0}^\sigma) = 0$.  In fact, we will show that
  $\Delta_{a,0}^\sigma$ is a deformation retract of
  $\Delta_{a,0}^\tau$.  Since $\Delta_a|_{\mathcal{P}_0}$ coincides
  with the difference function $\delta$ for $F$ introduced in
  Remark~\ref{rem:l-diff}, and since the embeddedness of $L$ implies since $\delta$ has no nonzero critical values, we can use $-\nabla \delta$ to
  deformation retract $\Delta_{a,0}^\tau$ to
  $\Delta_{a,0}^\sigma$.  
\end{proof}

The maps $p_a^\lambda$ and $D_a^\Lambda$ are defined from examining
long exact sequences of triples:

\begin{defn} \label{defn:connecting maps} For each $k \in \zz_{\geq
    0}$, in the long exact sequence of the triple $(\Delta_a^\eta,
  \Delta_a^{-\eta}, \Delta_a^\lambda)$, let $p^\lambda_a$ be the
  projection homomorphism:
  \begin{equation*} p^\lambda_a:  H^{k+N+1}(\Delta_a^{\eta},
    \Delta_a^{-\eta}) \to H^{k+N+1}(\Delta_a^{\eta}, \Delta_a^\lambda).
  \end{equation*}   Similarly, in the
  long exact sequence of the triples $(\Delta_a^\Lambda,
  \Delta_a^{\eta}, \Delta_a^{-\eta})$, let
  \begin{equation*} D^\Lambda_a: H^{k+N+1}(\Delta_a^{\eta},
    \Delta_a^{-\eta}) \to H^{k+N+2}(\Delta_a^{\Lambda}, \Delta_a^{\eta})
  \end{equation*} be the connecting homomorphism.
\end{defn}

Lastly, we use a Gysin sequence to define the map $\mathcal I$.

\begin{lem} For $k \in \zz_{\geq 0}$, by stabilizing if necessary,
  assume that $k+ N \geq n-1$.  Then there exists an injective map
  $$\mathcal I: H^k(L_a) \to H^{k+N+1} (\Delta^\eta_a, \Delta^{-\eta}_a).$$
  Moreover, $\mathcal I$ is an isomorphism when there are no
  non-degenerate critical points with critical value $0$ and index $k
  + N + 1$.
\end{lem}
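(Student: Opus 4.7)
The plan is to apply Morse--Bott theory to $\Delta_a$ in a neighborhood of its critical value $0$. By Lemma~\ref{lem:crit-pts}, the critical submanifold $C_a$ is non-degenerate of index $1+N$, and the assignment $(\x,x_n,\e,x_n,\e)\mapsto (\x,x_n,\partial F/\partial\x,a)$ identifies $C_a$ with the slice $L_a$. Denote by $E^-\to C_a$ the rank-$(N+1)$ negative normal bundle of $C_a$.

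For $\eta>0$ small enough that $0$ is the only critical value of $\Delta_a$ in $[-\eta,\eta]$, standard Morse--Bott theory (implemented via a gradient-like flow, as in the proof of Lemma~\ref{lem:def-retr}) gives a homotopy equivalence of pairs
\begin{equation*}
(\Delta_a^{\eta},\Delta_a^{-\eta})\simeq\Bigl(\Delta_a^{-\eta}\cup D(E^-)\cup\bigsqcup_i D^{\mu_i},\ \Delta_a^{-\eta}\Bigr),
\end{equation*}
where each $D^{\mu_i}$ is a disk of dimension equal to the index $\mu_i$ of an isolated critical point $q_i$ at level $0$, attached along its boundary sphere. Excision then decomposes
\begin{equation*}
H^{k+N+1}(\Delta_a^{\eta},\Delta_a^{-\eta})\cong H^{k+N+1}\bigl(D(E^-),S(E^-)\bigr)\oplus\bigoplus_{\mu_i=k+N+1}\zz.
\end{equation*}

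The map $\mathcal I$ is then the Thom isomorphism for $E^-$ (or, in the non-orientable case, the connecting homomorphism of the Gysin sequence for the sphere bundle $S(E^-)\to C_a$, which explains the phrasing in the lemma),
\begin{equation*}
H^k(L_a)\cong H^k(C_a)\xrightarrow{\ \cong\ }H^{k+N+1}\bigl(D(E^-),S(E^-)\bigr),
\end{equation*}
followed by the inclusion into the first summand above. Since the image of $\mathcal I$ lies in a direct summand and the Thom map is itself an isomorphism, $\mathcal I$ is injective. If moreover there are no isolated critical points $q_i$ at level $0$ of index $k+N+1$, the second summand vanishes and $\mathcal I$ becomes an isomorphism.

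The main obstacle is establishing a clean Thom-class for $E^-$. The block form of the Hessian at points of $C_a$ computed in~\eqref{eqn:hessian-delta} shows that $E^-$ splits fiberwise into the positive eigenspace of $A(x_n,\e)$ on the ``tilde'' side and the negative eigenspace of $A$ on the ``untilde'' side, so orientability of $E^-$ reduces to a statement about $L_a$ as an abstract slice; the stabilization assumption $k+N\ge n-1$ guarantees enough fiber directions to orient $E^-$ after further positive and negative quadratic stabilizations of $F$ if needed (and, failing that, forces the Gysin-sequence version to live in the correct degree). This is what makes the construction of $\mathcal I$ unambiguous, independently of the equivalence class of the generating family chosen for $L$.
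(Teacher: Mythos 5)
Your proof has the same skeleton as the paper's: identify $C_a$ with $L_a$, decompose $(\Delta_a^\eta, \Delta_a^{-\eta})$ up to homotopy via Morse--Bott cell and disk-bundle attachments, use excision to separate the disk-bundle contribution from the discrete critical points, and then identify $H^{k+N+1}(D(E^-), S(E^-))$ with $H^k(C_a)$. The paper executes this last step via a Gysin-type exact sequence, in which the adjacent terms $H^{k+N+1}(C_a)$ and $H^{k+N+2}(C_a)$ vanish because $k+N+1 > n-1 = \dim C_a$; that is precisely where the hypothesis $k+N \geq n-1$ is used, and it makes the map $\pi_*$ an isomorphism without having to exhibit an orientation. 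You instead invoke the Thom isomorphism directly, which needs $E^-$ to be oriented but makes the degree hypothesis logically unnecessary.

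The gap is in your treatment of orientability. Stabilization of $F$ by $Q(\e') - Q(\tilde{\e}')$ only adds a trivial summand to $E^-$ along $C_a$, and adding a trivial summand to a bundle never changes whether it is orientable (it does not change $w_1$). So ``enough fiber directions after further stabilization'' cannot produce an orientation, and your fallback parenthetical about the Gysin sequence is not actually developed. The fix, however, is already contained in your own splitting observation: at a point of $C_a$, after the fiber-preserving diffeomorphism of Lemma~\ref{lem:crit-pts} the fiber Hessian of $\Delta_a$ is $\mathrm{diag}(A,-A)$, so $E^-$ is the direct sum of the negative eigenbundle of $A$ (in the untilde directions) and the positive eigenbundle of $A$ (in the tilde directions). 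As bundles over $C_a$ these are complementary subbundles of the trivial $\rr^{N+1}$, so their sum --- hence $E^-$ --- is isomorphic to the trivial rank-$(N+1)$ bundle and in particular orientable. With this in place the Thom isomorphism is unconditionally available and your argument closes; you should then note explicitly that in your version the hypothesis $k+N \geq n-1$ plays no role, whereas in the paper's Gysin argument it is exactly what kills the outer terms.
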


\begin{proof} Recall that $\Delta_a$ always has a non-degenerate
  critical submanifold $C_a$ diffeomorphic to $L_a$ with critical
  value $0$ and index $N+1$; suppose that there are $m$ non-degenerate
  critical points, $z_1, \dots, z_m$, of critical value $0$ and index
  $k + N+1$.  Let $W^-(C_a)$ denote the descending disk bundle of the
  the critical set $C_a$.  Standard Morse-Bott theory says that the
  homotopy type of $\Delta_a^\eta$ is obtained from that of
  $\Delta_a^{-\eta}$ by attaching one $(k+N+1)$-cell for each $z_i$
  and attaching the (oriented) disk bundle $W^-(C_a)$ along its
  bounding sphere bundle.  Thus we have an isomorphism:
  $$ H^{k+N+1}(\Delta_a^\eta, \Delta_a^{-\eta}) \simeq
  H^{k+N+1}(W^-(C_a), \partial W^-(C_a)) \oplus\zz^m.$$ Let $\rho:
  H^{k+N+1}(\Delta_a^\eta, \Delta_a^{-\eta}) \to
  H^{k+N+1}(W^-(C_a), \partial W^-(C_a))$ denote the associated
  (surjective) projection map with $\rho^{-1}$ the associated
  inclusion.  Lastly, to identify $H^{k+N+1}(W^-(C_a), \partial
  W^-(C_a))$ with $H^k(L_a)$, we consider the Gysin sequence
%	  \begin{multline} \dots \to H^{k+N+1}(C_a) \to H^{k+N+1}
%	    (W^-(C_a), \partial W^-(C_a)) \stackrel{\pi_*}{\longrightarrow} 
%	    H^k(C_a) \to \\H^{k+N+2}(C_a)\to \cdots. \hskip 2.2in
%	  \end{multline}
  \begin{align*} \dots \to &H^{k+N+1}(C_a) \to H^{k+N+1}
    (W^-(C_a), \partial W^-(C_a)) \stackrel{\pi_*}{\longrightarrow} 
    H^k(C_a) \to \\ &H^{k+N+2}(C_a)\to \cdots. \hskip 2.2in
  \end{align*}
  Since $C_a$ is diffeomorphic to the $(n-1)$-dimensional $L_a$, our
  hypothesis that $N \geq n-1$ implies that  $k+ N
  + 1 > n-1$, and thus $\pi_*$ is an isomorphism.  The desired
   injective map $\mathcal I$ is defined to be
  $(\rho)^{-1}\circ (\pi_*)^{-1}$.
\end{proof}

We now have the maps necessary for the definition of the
capacities: $$\varphi_{a, \pm}^\lambda : H^k(L_a) \to
H^{k+N+1}(\Delta_{a,\pm}^\eta, \Delta_{a,\pm}^\lambda)$$ is given by
$\varphi_{a, \pm}^\lambda = i^*_\pm \circ p^\lambda_a \circ
\mathcal{I}$, while $$\Phi_{a, \pm}^\Lambda : H^k(L_a) \to H^{k+N+2}
(\Delta_{a, \pm}^\Lambda, \Delta_{a, \pm}^\eta)$$ is given by
$\Phi_{a, \pm}^\Lambda = i_\pm^* \circ D_a^\Lambda \circ \mathcal
{I}$.

\begin{defn} \label{defn:capacity} For $u \in H^k(L_a)$, the {\bf
    positive and negative lower capacities}, $c_\pm^{L, a}(u) \in
  (-\infty, 0]$, are defined to be
  $$c_\pm^{L, a} (u) = \sup \{ \lambda<0 : \varphi_{a, \pm}^\lambda(u) = 0\};$$
  if the set on the right hand side is empty then $c_\pm^{L,a}(u) =
  0$.  For $u \in H^k(L_a)$, the {\bf positive and negative upper
    capacities}, $C_\pm^{L, a}(u) \in [0, \infty)$, are defined to be
  $$C_\pm^{L, a}(u) = \inf \{ \Lambda>0 : \Phi_{a, \pm}^\Lambda(u) \neq 0 \};$$
  if the set on the right hand side is empty then $C_\pm^{L,a}(u) =
  0$.
\end{defn}

% **********
\subsection{Foundational Properties}
\label{ssec:basic-prop}

In this section, we prove two important technical properties, the
first of which is that the capacities always occur at the critical
values of the difference function $\Delta_a$.

\begin{lem} \label{lem:cap-at-crit} For all $u \in H^k(L_a)$,
  $c_\pm^{L, a}(u)$ and $C_\pm^{L,a}(u)$ are critical values of
  $\Delta_a$.
\end{lem}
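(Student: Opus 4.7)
The plan is a standard contradiction argument based on Morse-theoretic deformation retracts. I will focus on $c_+^{L,a}(u)$; the other three capacities follow by identical reasoning. The value $0$ is automatically a critical value of $\Delta_a$ by Lemma~\ref{lem:crit-pts} (which identifies the submanifold $C_a$ as a critical set at level $0$), so I may assume the alleged capacity is $\lambda_0 \in (-\infty, 0)$ and suppose for contradiction that $\lambda_0$ is a regular value of $\Delta_a$. Because $\eta > 0$ was chosen so that $0$ is the only critical value in $[-\eta, \eta]$, I can pick $\epsilon > 0$ small enough that $[\lambda_0 - \epsilon, \lambda_0 + \epsilon]$ contains no critical values of $\Delta_a$ and is disjoint from $[-\eta, \eta]$.

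Lemma~\ref{lem:def-retr}, applied to both half-spaces $\mathcal{P}_+$ and $\mathcal{P}_-$, deformation retracts $\Delta^{\lambda_0+\epsilon}_{a,\pm}$ onto $\Delta^{\lambda_0-\epsilon}_{a,\pm}$. Inspecting its proof, the retracting vector field is constructed on the whole domain (and is merely modified near $\mathcal{P}_0$ so as to be tangent to it), so its flow simultaneously deformation retracts $\Delta^{\lambda_0+\epsilon}_a$ onto $\Delta^{\lambda_0-\epsilon}_a$ compatibly with the splitting. Passing to the long exact sequences of the triples $(\Delta^\eta_a, \Delta^{\lambda_0+\epsilon}_a, \Delta^{\lambda_0-\epsilon}_a)$ and its $\mathcal{P}_+$-analogue therefore produces a commutative square of isomorphisms between $H^{k+N+1}(\Delta^\eta_a, \Delta^{\lambda_0\pm\epsilon}_a)$ and $H^{k+N+1}(\Delta^\eta_{a,+}, \Delta^{\lambda_0\pm\epsilon}_{a,+})$ that intertwines the maps $p^{\lambda_0\pm\epsilon}_a$ with their $i^*_+$-composites. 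Chasing $\mathcal{I}(u)$ around this square shows that $\varphi^{\lambda_0-\epsilon}_{a,+}(u) = 0$ if and only if $\varphi^{\lambda_0+\epsilon}_{a,+}(u) = 0$.

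Either branch contradicts $c_+^{L,a}(u) = \lambda_0$: if both values vanish, then $\lambda_0 + \epsilon > \lambda_0$ lies in the set whose supremum defines the capacity; while if both are nonzero, the same equivalence shows that no point of $[\lambda_0 - \epsilon, \lambda_0 + \epsilon]$ lies in that set, forcing the supremum to be at most $\lambda_0 - \epsilon < \lambda_0$. The upper capacities $C_\pm^{L,a}(u)$ follow by the parallel plan after replacing $p^\lambda_a$ by the connecting homomorphism $D^\Lambda_a$ and working with the triple $(\Delta^\Lambda_a, \Delta^\eta_a, \Delta^{-\eta}_a)$, using naturality of $D^\Lambda_a$ with respect to the inclusion of pairs to obtain the analogous commutative square. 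The only genuinely nontrivial ingredient, and what I expect to be the main obstacle if one tried to prove this from scratch, is producing a deformation retract whose flow respects the $\mathcal{P}_\pm$ splitting; fortunately that is exactly the content of Lemma~\ref{lem:def-retr}, so here it is available off the shelf and the rest of the argument is routine Morse theory.
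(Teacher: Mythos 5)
Your proof is correct and follows essentially the same strategy as the paper's: both hinge on Lemma~\ref{lem:def-retr} giving an isomorphism $H^{k+N+1}(\Delta^{\eta}_{a,\pm}, \Delta^{\nu}_{a,\pm}) \simeq H^{k+N+1}(\Delta^{\eta}_{a,\pm}, \Delta^{\lambda}_{a,\pm})$ across a critical-value-free interval, fed into a commutative diagram with $\varphi^{\lambda}_{a,\pm}$ and $\varphi^{\nu}_{a,\pm}$. The paper phrases it as ``$\lambda$ non-critical and $\varphi^{\lambda}_{a,\pm}(u)=0$ implies $\lambda < c^{L,a}_{\pm}(u)$'' while you run a direct contradiction at $\lambda_0$ split into two cases, but this is a cosmetic repackaging of the same argument.
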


\begin{proof} We will prove the proposition for the capacities
  $c_\pm^{L,a}(u)$, as the proof for the capacities $C_\pm^{L,a}(u)$
  is entirely similar.

  First notice that if $\varphi_{a, \pm}^\lambda(u) \neq 0$, for all
  $\lambda$, then by definition $c_\pm^{L,a}(u) = 0$, and by Lemma
  \ref{lem:crit-pts}, $0$ is always a critical value of $\Delta_a$.
  Next, we will show that if $ \varphi_{a,\pm}^\lambda(u) = 0$ for
  some $\lambda$ that is a non-critical value of $\Delta_a$, then
  $\lambda < c_{\pm}^{L,a}(u)$.  Suppose there are no critical values
  of $\Delta_a$ in $[\lambda, \nu]$.  By Lemma~\ref{lem:def-retr}, the
  inclusion map of pairs
  \begin{equation*} i: (\Delta^{\eta}_{a, \pm},
    \Delta^\lambda_{a,\pm}) \to (\Delta^{\eta}_{a, \pm},
    \Delta^\nu_{a,\pm})
  \end{equation*} induces an isomorphism on relative cohomology.  The
  commutativity of the  diagram 
%	    \footnote{If $u = \phi^\lambda_{a,\pm}(x)$ for some $x \in H^n
%	    (\Delta^{-\eta}_{a, \pm}, \Delta^\lambda_{a,\pm})$, then the
%	    commutativity of the following diagram shows that $u =
%	    \phi^\nu_{a,\pm}( (i^*)^{-1}(x))$.}
  \begin{equation*} 
    \xymatrix@R=8pt{ & H^{k+N+1}(\Delta^{\eta}_{a,\pm},
      \Delta^{\lambda}_{a,\pm})  \ar[dd]^{(i^*)^{-1}} \\
      H^k(L_a) \ar[ur]^-{\varphi^\lambda_{a, \pm}}
      \ar[dr]_-{\varphi^\nu_{a, \pm}} & \\
      & H^{k+N+1}(\Delta^{\eta}_{a,\pm}, \Delta^{\nu}_{a,\pm})
      }
  \end{equation*}
  shows that if 
  $ \varphi^\lambda_{a, \pm}(u) = 0$ 
   then 
  $ \varphi^\nu_{a, \pm} (u) = 0$.
  Thus, we obtain $\lambda < c_\pm^{L, a}(u)$.
\end{proof}

\begin{cor} \label{cor:vanishing_capacities} Suppose $0 \neq u \in
  H^k(L_a)$.  Then $c_\pm^{L,a}(u) = 0 $ if and only if
  $\varphi_{a, \pm}^\lambda (u) \neq 0$
  % $u \notin \ker  \varphi_{a, \pm}^\lambda$ 
  for all $\lambda$, and $C_\pm^{L,a}(u) = 0 $ if and only if
  $\Phi_{a, \pm}^\Lambda(u) = 0$ for all $\Lambda$.
\end{cor}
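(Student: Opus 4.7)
The plan is to derive the corollary directly from Definition~\ref{defn:capacity} and Lemma~\ref{lem:cap-at-crit}. In both equivalences, the backward implication is immediate: if the defining set is empty, then by the convention in Definition~\ref{defn:capacity} the capacity equals $0$. The content is in the forward implications, which I will prove by contrapositive.

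For the upper capacity, I will show that if $\Phi_{a,\pm}^{\Lambda_0}(u) \neq 0$ for some $\Lambda_0 > 0$, then $C_\pm^{L,a}(u) > 0$. Let $\mu$ denote the smallest positive critical value of $\Delta_a$; by construction $\mu > \eta$. For any $\Lambda \in (\eta, \mu)$, Lemma~\ref{lem:def-retr} gives that $\Delta_{a,\pm}^\Lambda$ deformation retracts onto $\Delta_{a,\pm}^\eta$, so $H^{k+N+2}(\Delta_{a,\pm}^\Lambda, \Delta_{a,\pm}^\eta) = 0$ and hence $\Phi_{a,\pm}^\Lambda(u) = 0$. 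Consequently, the set $\{\Lambda > 0 : \Phi_{a,\pm}^\Lambda(u) \neq 0\}$ is contained in $[\mu, \infty)$, giving $C_\pm^{L,a}(u) \geq \mu > 0$.

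The lower capacity case is symmetric in spirit but more delicate. I will show that if $\varphi_{a,\pm}^{\lambda_0}(u) = 0$ for some $\lambda_0 < 0$, then $c_\pm^{L,a}(u) < 0$. The propagation argument in the proof of Lemma~\ref{lem:cap-at-crit} shows that $c_\pm^{L,a}(u)$ is a critical value of $\Delta_a$ strictly greater than $\lambda_0$, and the only remaining possibility for $c_\pm^{L,a}(u) = 0$ is that $\varphi_{a,\pm}^\lambda(u) = 0$ for $\lambda$ arbitrarily close to $0^-$. To rule this out, I would examine $\varphi_{a,\pm}^\lambda(u)$ for $\lambda$ slightly less than $-\eta$: the long exact sequence of the triple $(\Delta_a^\eta, \Delta_a^{-\eta}, \Delta_a^\lambda)$ together with the deformation retraction of $\Delta_a^{-\eta}$ onto $\Delta_a^\lambda$ forces $p_a^\lambda$ to be an isomorphism, with the analogous conclusion holding on $\mathcal{P}_\pm$. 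Hence $\varphi_{a,\pm}^\lambda(u) = 0$ for such $\lambda$ is equivalent to $i_\pm^*(\mathcal I(u)) = 0$ in $H^{k+N+1}(\Delta_{a,\pm}^\eta, \Delta_{a,\pm}^{-\eta})$.

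The main obstacle, and the step where the hypothesis $u \neq 0$ is genuinely used, is verifying that $i_\pm^*(\mathcal I(u)) \neq 0$. I expect this to follow from the construction of $\mathcal I$ through the Thom isomorphism of the descending disk bundle $W^-(C_a)$: since the critical submanifold $C_a$ lies on the diagonal $\mathcal{P}_0 \subset \mathcal{P}_\pm$, the intersection $W^-(C_a) \cap \mathcal{P}_\pm$ is a half-disk bundle over $C_a$ whose associated relative cohomology is still isomorphic to $H^k(C_a) \cong H^k(L_a)$, with the restriction map carrying $\mathcal I(u)$ to the class corresponding to $u$. Thus $i_\pm^*(\mathcal I(u))$ is nonzero for $u \neq 0$, which yields a uniform gap between the defining set and $0$ and forces $c_\pm^{L,a}(u) < 0$, completing the contrapositive.
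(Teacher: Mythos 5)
Your overall strategy matches the paper's: the backward direction is immediate from the definitions, and the forward direction is reduced, via the propagation argument and long exact sequences of triples, to the level where $\lambda$ (resp.\ $\Lambda$) is just past $-\eta$ (resp.\ $\eta$). Your argument for the upper capacities is clean and correct: the absence of critical values in $(\eta, \mu)$ forces $C_\pm^{L,a}(u) \geq \mu > 0$ whenever the defining set is nonempty, which is exactly the ``analogous'' argument the paper leaves to the reader.

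For the lower capacities you correctly identify that, after the reduction, the entire content is showing $i_\pm^*\bigl(\mathcal I(u)\bigr) \neq 0$ for $u \neq 0$; the paper itself compresses this step into ``$p_a^\lambda$ is injective, and thus $u$ cannot be in the kernel of $\varphi_{a,\pm}^\lambda$'' without explicitly addressing $i_\pm^*$, so you have pinpointed exactly where the argument is thinnest. Unfortunately the half-disk-bundle heuristic you offer does not close this gap. The pair $\bigl(W^-(C_a)\cap \mathcal P_\pm,\ W^-(C_a)\cap\mathcal P_\pm \cap \Delta_a^{-\eta}\bigr)$ has the fiberwise model $(D^{N+1}_+, S^N_+)$, a closed half-disk modulo the closed hemisphere $S^N_+$ of its spherical boundary; both $D^{N+1}_+$ and $S^N_+$ are contractible, so $H^*(D^{N+1}_+, S^N_+) = 0$ in every degree, and there is no relative Thom class on the half-disk bundle. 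The claimed isomorphism with $H^k(C_a)$ therefore fails. Moreover, the deformation retraction of $(\Delta_{a,\pm}^\eta, \Delta_{a,\pm}^{-\eta})$ onto a half-disk-bundle pair is itself unjustified: the ordinary Morse--Bott retraction does not preserve $\mathcal P_\pm$, and the $\mathcal P_0$-tangent vector field of Lemma~\ref{lem:def-retr} is constructed only under the hypothesis that the interval contains no critical values, which $[-\eta,\eta]$ violates. So your proof has a genuine gap at the lower-capacity step, precisely where the paper's own write-up is also terse.
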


\begin{proof} We will prove this for $c_\pm^{L,a}(u)$; the argument
  for $C_\pm^{L,a}(u)$ is analogous.

  By definition, if $\varphi_{a, \pm}^\lambda (u) \neq 0$,
  %$u \notin \ker \varphi_{a, \pm}^\lambda$
  for all $\lambda$, then $c_\pm^{L,a}(u) = 0$.  To show the converse, first
  let $V^-$ denote the set of negative critical values of $\Delta_a$,
  and let $m = \sup V^-$.  Note that $m < 0$ since the set of critical
  values of $\Delta_a$ is discrete (as $\Delta_a$ is
  linear-quadratic-at-infinity).  We will show that if $0 \neq u \in
  \ker \varphi_{a,\pm}^\lambda$ for some $\lambda$, then
  $c_{\pm}^{L,a}(u) \leq m$.  If not, then we can assume $u \in \ker
  \varphi_{a,\pm}^{\lambda}$ for some $m < \lambda < -\eta$.  But then
  by examining the long exact sequence of the triple $(\Delta_a^\eta,
  \Delta_a^{-\eta}, \Delta_a^\lambda)$, we find that the map
  $p_a^{\lambda}$ is injective, and thus $0 \neq u \in H^k(L_a)$
  cannot be in the kernel of $\varphi_{a, \pm}^{\lambda}$.
\end{proof}

The second important technical property is that the capacities are
independent of the choice of generating family $F$ of $L$.

\begin{lem} \label{lem:cap-well-def} For all $u \in H^k(L_a)$, the
  capacities $c_\pm^{L, a}(u)$ and $C_\pm^{L,a}(u)$ are independent of
  the generating family $F$ used to define $L$.
\end{lem}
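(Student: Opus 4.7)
The plan is to invoke Theorem~\ref{thm:2n-exist+unique}, which says that any two quadratic-at-infinity generating families for $L$ are equivalent, and then verify that the capacities are unchanged under each of the three elementary operations: addition of a constant, fiber-preserving diffeomorphism, and stabilization. Since cohomology classes on $L_a$ (and the slice $L_a$ itself) do not change when $F$ is replaced by an equivalent generating family, it is enough to show that each operation induces a commuting system of isomorphisms on the relative cohomology groups of the split sublevel sets $(\Delta^\eta_{a,\pm}, \Delta^\lambda_{a,\pm})$ and $(\Delta^\Lambda_{a,\pm}, \Delta^\eta_{a,\pm})$ that intertwines the maps $\varphi^\lambda_{a,\pm}$ and $\Phi^\Lambda_{a,\pm}$.

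For addition of a constant, the change $F\mapsto F+C$ yields $F_a \mapsto F_a+C$ and thus $\Delta_a$ is literally unchanged, so all sublevel sets and all maps in the diagram defining the capacities coincide. For a fiber-preserving diffeomorphism $\Phi(\x,x_n,\e) = (\x,x_n,\phi_{(\x,x_n)}(\e))$, I would define the induced diffeomorphism
\begin{equation*}
\widetilde\Phi(\x,x_n,\e,\tilde x_n,\tilde\e) = (\x,x_n,\phi_{(\x,x_n)}(\e),\tilde x_n,\phi_{(\x,\tilde x_n)}(\tilde\e))
\end{equation*}
of $\rr^{n-1}\times\rr^{1+N}\times\rr^{1+N}$. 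A direct computation shows $\Delta'_a = \Delta_a\circ\widetilde\Phi$, and since $\widetilde\Phi$ preserves the $x_n$ and $\tilde x_n$ coordinates it preserves $\mathcal{P}_\pm$; hence it restricts to diffeomorphisms of split sublevel sets and induces isomorphisms on cohomology that commute with $i^*_\pm$, $p_a^\lambda$, $D_a^\Lambda$, and $\mathcal{I}$.

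For stabilization $F'(x,\e,\e') = F(x,\e) + Q(\e')$, the new difference function factors as
\begin{equation*}
\Delta'_a = \Delta_a \oplus \widetilde Q, \qquad \widetilde Q(\e',\tilde\e') = Q(\e') - Q(\tilde\e'),
\end{equation*}
where $\widetilde Q$ is a non-degenerate quadratic form of index $K$ on $\rr^{2K}$. Using a standard Morse-theoretic/Thom-isomorphism argument (split the $(\e',\tilde\e')$-space into the negative and positive eigenspaces of $\widetilde Q$ and apply the deformation retraction along the positive eigenspace while using the Thom isomorphism for the negative eigenspace), one obtains canonical isomorphisms
\begin{equation*}
H^{j+K}((\Delta'_a)^\tau,(\Delta'_a)^\sigma) \;\cong\; H^{j}(\Delta_a^\tau,\Delta_a^\sigma),
\end{equation*}
and these isomorphisms are natural under inclusions, so they commute with $p_a^\lambda$ and $D_a^\Lambda$. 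The crucial point is that the added coordinates $(\e',\tilde\e')$ are independent of $x_n,\tilde x_n$, so $\mathcal{P}_\pm$ for $\Delta'_a$ is simply $\mathcal{P}_\pm$ for $\Delta_a$ times $\rr^{2K}$; hence the Thom isomorphism respects the splitting and commutes with $i^*_\pm$. Moreover, the critical submanifold $C'_a$ is still diffeomorphic to $L_a$ and its index increases by $K$, giving a degree shift by exactly $K$ that matches the shift $N\to N+K$ built into the degrees appearing in the definition of $\varphi_{a,\pm}^\lambda$ and $\Phi_{a,\pm}^\Lambda$. This identifies the map $\mathcal{I}$ of the stabilized family with that of the original, completing the commutative picture.

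The main obstacle will be the stabilization step: one must verify that the Thom-type isomorphism for $\Delta_a \oplus \widetilde Q$ genuinely respects the splitting into $\mathcal{P}_\pm$ and is compatible with the Gysin-sequence construction of $\mathcal{I}$, including the index/degree shift. Once these compatibilities are checked, the equality $\varphi'^\lambda_{a,\pm} = 0 \iff \varphi^\lambda_{a,\pm}=0$ (and similarly for $\Phi$) is immediate, giving $c^{L,a}_\pm(u)$ and $C^{L,a}_\pm(u)$ independent of the choice of generating family.
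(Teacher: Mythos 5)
Your proposal is correct and follows essentially the same route as the paper: reduce to the three elementary operations via uniqueness of generating families, observe that addition of a constant leaves $\Delta_a$ unchanged, that a fiber-preserving diffeomorphism induces a $\mathcal{P}_\pm$-preserving diffeomorphism of sublevel sets, and that stabilization produces a degree shift by the index of $\widetilde Q$. The only (inessential) difference is in presentation of the stabilization step: you invoke a Thom-isomorphism argument after splitting into eigenspaces of $\widetilde Q$, whereas the paper iterates the rank-one cases $Q(e)=\pm e^2$ via explicit deformation retraction, excision, and a suspension identification; and you flag explicitly the compatibility with $\mathcal{P}_\pm$ and with $\mathcal{I}$, which the paper leaves implicit.
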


\begin{proof} It suffices to show that the capacities are unchanged
  if $F$ is altered by the addition of a constant, a fiber-preserving
  diffeomorphism, or a stabilization.

  If $F'$ is obtained from $F$ by the addition of a constant  then the
  corresponding difference function $\Delta_a'$ agrees with the
  difference function $\Delta_a$, and thus the capacities are
  unchanged.
  
  Next, suppose that $F'$ is obtained from $F$ by a fiber-preserving
  diffeomorphism.  Then $F_a'(\x, x_n, \e) = F(\x, x_n, \phi_{(\x,
    x_n)}(\e)) - ax_n$.  It follows that there is a $\mathcal
  P_\pm$-preserving diffeomorphism of $\rr^{n-1} \times \rr^{1+N}
  \times \rr^{1+N}$ taking the sublevel sets $\Delta_a^\lambda$ to
  $(\Delta_a')^\lambda$, for all $\lambda$.  The naturality of the
  long exact sequences then implies that the capacities are unchanged.
  
  Lastly, suppose $F'$ is obtained from $F$ by stabilization, i.e.,
  $F'(\x, x_n, \mathbf{e}, \mathbf{e}') = F(\x, x_n, \mathbf e) +
  Q(\mathbf e')$, for some non-degenerate quadratic function $Q: \rr^q
  \to \rr$.  It then follows that the associated difference function
  $\Delta_a'$ is a stabilization of $\Delta_a$:
  $$\Delta_a'(\x, x_n, \mathbf e, \mathbf e', \tilde x_n, 
  \tilde{\mathbf e}, \tilde{\mathbf e}') = \Delta_a(\x, x_n, \e,
  \tilde x_n, \tilde \e) + Q(\mathbf e') - Q(\tilde{\mathbf e}').$$ In
  general, for any function $G: \rr^m \to \rr$ and any non-degenerate
  quadratic $Q$, we will show that for $a < b$, there is a natural
  isomorphism:
  \begin{equation} \label{eqn:hom-stab}
    H^* (G^b, G^a) \simeq H^{* + \ind Q}((G \oplus Q)^b, (G \oplus Q)^a).
  \end{equation}
  It suffices to consider $Q: \rr \to \rr$ with either $Q(e) = e^2$ or
  $Q(e) = -e^2$.  For the first case, it is easy to see that the pair
  $(G \oplus Q)^b, (G \oplus Q)^a)$ deformation retracts to $(G^b,
  G^a)$.  For the second case, the isomorphism in
  Equation~\eqref{eqn:hom-stab} can be seen as follows: after a
  homeomorphism and deformation retract, $((G \oplus Q)^b, (G \oplus
  Q)^a)$ becomes 
  \begin{multline} \label{eqn:hom-stab-2}
    \bigl((\rr^m \times (-\infty, -1] \cup [1, \infty)) \cup G^b
    \times [-1,1], \\ (\rr^m \times (-\infty, -1] \cup [1, \infty)) \cup
    G^a \times [-1,1] \bigr).
  \end{multline}
  By excision, the cohomology groups of the pair in
  Equation~\eqref{eqn:hom-stab-2} are isomorphic to: 
  \begin{equation}
    H^*\bigl(G^b \times [-1,1], (G^b \times \{\pm 1\}) \cup (G^a \times
    [-1,1]) \bigr),
  \end{equation}
  which can be regarded as the cohomology groups of a suspension of
  $(G^b, G^a)$, as desired.
\end{proof}

% **********
\section{Properties of the Capacities}
\label{sec:properties}

With the capacities now defined, we are ready to prove the five
properties of these capacities enumerated in
Theorem~\ref{thm:properties}.

% *****
\subsection{Monotonicity}
\label{ssec:mono}

Let $L_a, L_b$ be generic slices of a planar Lagrangian with $0
\notin [a, b]$. If $L_{[a,b]} = \cup_{t \in [a,b]} L_t$ and if $j_t: L_t \to
L_{[a,b]}$ is the inclusion map then the goal of monotonicity is to compare
the capacities of the cohomology classes $j_a^*(u) \in H^k(L_a)$ and
$j_b^*(u) \in H^k(L_b)$, for some $u \in H^k(L_{[a,b]})$.

The purpose of splitting the domain of the difference function
$\Delta_a$ into $\mathcal{P}_\pm$ was to obtain the containments
$\Delta_{b, +}^\lambda \subset \Delta_{a, +}^\lambda$ and $\Delta_{a,
  -}^\lambda \subset \Delta_{b, -}^\lambda$; see equations and
relations (\ref{eqn:split-sublevels}) --
(\ref{eqn:difference-inclusions}).  This is the key fact in the proof
of Monotonicity.  We begin with a lemma:

\begin{lem} 
  \label{lem:mono-comm-diag} 
  Consider $a < b$ with $0 \notin [a, b]$.  Then the following diagram
  (and its analogues in the cases of the negative lower capacity and
  the upper capacities) commutes:
  \begin{equation*} \xymatrix@R=5pt{ & H^k(L_a)
      \ar[rr]^-{\varphi^{\lambda}_{a,+}}
      & & H^{k+N+1}(\Delta^{\eta}_{a,+}, \Delta^{\lambda}_{a,+}) \ar[dd]^{i^*} \\
      H^k(L_{[a,b]}) \ar[ur]^{j^*_a} \ar[dr]_{j^*_b} & & &  \\
      & H^k(L_b) \ar[rr]_-{\varphi^{\lambda}_{b,+}}  & &
      H^{k+N+1}(\Delta^{\eta}_{b,+}, \Delta^{\lambda}_{b,+}).}
  \end{equation*}
\end{lem}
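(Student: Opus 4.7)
The plan is to verify the commutativity factor by factor, using the decomposition $\varphi^\lambda_{c,+} = i^*_+ \circ p^\lambda_c \circ \mathcal{I}_c$ for $c = a, b$. The essential geometric input is Equation~\eqref{eqn:difference}: since $a < b$ and $\tilde{x}_n - x_n \geq 0$ on $\mathcal{P}_+$, we have $\Delta_a \leq \Delta_b$ on all of $\mathcal{P}_+$, which gives inclusions of pairs
\[
(\Delta^\mu_{b,+}, \Delta^\nu_{b,+}) \hookrightarrow (\Delta^\mu_{a,+}, \Delta^\nu_{a,+})
\]
for every $\nu < \mu$. Applying this with $(\mu, \nu) = (\eta, \lambda)$ yields the vertical $i^*$ in the lemma's diagram, and applying it with other choices of $(\mu, \nu)$ produces pullbacks that will connect the intermediate cohomology groups appearing in $\varphi^\lambda_{a,+}$ with those in $\varphi^\lambda_{b,+}$.

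For the two formal factors $p^\lambda$ and $i^*_+$, the commutativity reduces to standard naturality. I would first swap the order of composition so that $\varphi^\lambda_{c,+} = p^\lambda_{c,+} \circ i^*_+ \circ \mathcal{I}_c$, where $p^\lambda_{c,+}$ is the analogous LES-of-triple projection attached to $(\Delta^\eta_{c,+}, \Delta^{-\eta}_{c,+}, \Delta^\lambda_{c,+})$. This swap is valid because both $p^\lambda$ and $i^*_+$ are pullbacks, which commute with each other by naturality of the LES of a triple under the inclusion of pair triples. With the decomposition in this form, the commutativity for $p^\lambda_{c,+}$ follows from naturality of the triple LES under the inclusion $(\Delta^\eta_{b,+}, \Delta^{-\eta}_{b,+}, \Delta^\lambda_{b,+}) \hookrightarrow (\Delta^\eta_{a,+}, \Delta^{-\eta}_{a,+}, \Delta^\lambda_{a,+})$, and the commutativity for $i^*_+$ from naturality of the Mayer--Vietoris splitting of Lemma~\ref{lem:m-v-splitting} under the same inclusion.

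The main step, and the principal obstacle, is the Thom/Gysin factor $\mathcal{I}$: after the above reductions, what remains is the equality
\[
i^* \circ (i^*_+)_a \circ \mathcal{I}_a \circ j^*_a = (i^*_+)_b \circ \mathcal{I}_b \circ j^*_b
\]
as maps $H^k(L_{[a,b]}) \to H^{k+N+1}(\Delta^\eta_{b,+}, \Delta^{-\eta}_{b,+})$. The key geometric observation is that each critical submanifold $C_t \cong L_t$ lies in $\mathcal{P}_0 \subset \mathcal{P}_+$, and that the family $\{C_t\}_{t \in [a,b]}$ sweeps out a smooth submanifold diffeomorphic to the cobordism $L_{[a,b]}$, with the identification $C_c \cong L_c$ realizing the natural inclusion $L_c \hookrightarrow L_{[a,b]}$. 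I would establish the equality by a parametrized Thom-class argument: the Thom class of $C_a$, once restricted to the smaller sublevel set $\Delta^\eta_{b,+}$ under the inclusion pullback, should coincide with the Thom class of $C_b$ under the identification of cohomology classes induced by $j^*_a$ and $j^*_b$. The hard part is making this parametrized Thom isomorphism precise, since the descending disk bundles $W^-(C_a)$ and $W^-(C_b)$ are attached to different Morse--Bott functions and are not manifestly restrictions of a common bundle; the argument will need to exploit the smoothness of the family $\Delta_t$, the fact that the Hessian normal to $C_t$ varies continuously with $t$, and a careful analysis of how $\mathcal{P}_+$ intersects the descending directions (cf.~the Hessian computation in Equation~\eqref{eqn:hessian-delta}) so that the Thom class restricts correctly along the chain of inclusions.
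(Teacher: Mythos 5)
Your overall strategy is close to the paper's, and your analysis of the formal factors $p^\lambda$ and $i^*_+$ is fine: they commute with the vertical inclusion pullback by naturality of the long exact sequence of a triple and of the Mayer--Vietoris splitting (Lemma~\ref{lem:m-v-splitting}). You have also correctly identified where the real work lies --- the Thom/Gysin factor $\mathcal{I}$ --- and your geometric observations ($C_t \subset \mathcal{P}_0 \subset \mathcal{P}_+$, the family $\{C_t\}$ sweeps out a copy of $L_{[a,b]}$) are exactly the ones that make the argument go. But at precisely this point you stop: you describe the obstacle (the descending disk bundles $W^-(C_a)$ and $W^-(C_b)$ belong to different Morse--Bott functions with no manifest common parent) and then say the argument ``will need to exploit'' various continuity properties, without producing the mechanism that actually relates the two Thom classes. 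As written, this is a gap rather than a proof.

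The paper's resolution is to introduce the \emph{extended difference function}
\[
\Delta_{ab}(t, \x, x_n, \e, \tilde{x}_n, \tilde{\e}) = \Delta_t(\x, x_n, \e, \tilde{x}_n, \tilde{\e}), \quad t \in [a,b],
\]
whose domain has one extra coordinate $t$. Its sublevel sets satisfy $\Delta_{ab}^\lambda = \bigcup_{t} \Delta_t^\lambda$ and interact with $\mathcal{P}_\pm$ exactly as before, and $\Delta_{ab}$ is Morse--Bott with a \emph{single} critical submanifold of index $N+1$ identified with $L_{[a,b]}$. Running the construction of Section~\ref{sec:defn-cap} on $\Delta_{ab}$ produces a map $\varphi^\lambda_{ab,+}: H^k(L_{[a,b]}) \to H^{k+N+1}(\Delta^\eta_{ab,+}, \Delta^\lambda_{ab,+})$, and the inclusions $\Delta^\bullet_{a,+}, \Delta^\bullet_{b,+} \hookrightarrow \Delta^\bullet_{ab,+}$ induce pullbacks $J_a^*, J_b^*$ to the two slice groups. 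The whole diagram in the lemma then factors through this middle object, and commutativity of each cell reduces to the naturality of the Gysin sequence, the triple sequence, and Mayer--Vietoris under inclusions --- precisely because now there \emph{is} a single Morse--Bott function whose descending bundle restricts to the ones at levels $a$ and $b$. In short: the extended difference function is the precise realization of the ``parametrized Thom class'' you are reaching for, and your proposal needs this construction to go from a plausible outline to a proof.
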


Assuming the lemma for now, we prove Monotonicity:

\begin{proof}[Proof of Monotonicity] To prove the inequality $c_+^{L,
    a} (j^*_a u) \leq c_+^{L, b}(j^*_b u)$, note that
  Lemma~\ref{lem:mono-comm-diag} shows that if
  $\varphi_{a,+}^\lambda(j^*_au) = 0$ then
  $\varphi_{b,+}^\lambda(j^*_bu) = 0$.  Thus, we have:
  \begin{align*}
    c_+^{L, a}(j^*_au) &= \sup \{ \lambda: 
    \varphi_{a,+}^\lambda( j^*_a u ) = 0\} \\ &\leq \sup \{ \lambda: 
    \varphi_{b,+}^\lambda(j^*_bu) = 0 \} \\ &= c_+^{L, b}(j^*_bu).
  \end{align*}
  When comparing $c_-^{L, a}(j^*_au)$ and $c_-^{L, b}(j^*_bu)$, the
  fact that $\Delta_{a,-} \subset \Delta_{b, -}$ causes the map $i^*$
  to reverse direction, and hence reverses the inequality.  The proofs
  for the upper capacities are analogous.
%	  \footnote{ For the
%	    upper positive capacity, an analogous diagram shows
%	    $\Phi_{b,+}^\Lambda(u_a) \neq 0 \implies \Phi_{a,+}^\Lambda(u_b)
%	    \neq 0$, and thus
%	  $$C_+^{L, a}(u_a) = \inf \{ \Lambda: \Phi_{a,+}^\Lambda(u_a) \neq 0 \}
%	  \leq \inf \{ \Lambda: \Phi_{b,+}^\Lambda(u_b) \neq 0 \} =
%   C_+^{L, b}(u_b).$$ Again, when comparing $C_-^{L, a}(u_a)$ and
%   $C_-^{L, b}(u_b)$, the inequality will be reversed since
%   $\Delta_{a,-} \subset \Delta_{b, -}$.}

  To show that the inequalities are strict if one of the capacities
  involved is nonzero, we work as in \cite[Lemma
  4.7]{viterbo:generating}: note that for all but finitely many $a$,
  we may assume that there exists an $\epsilon$ so that if $t \in
  (a-\epsilon, a+\epsilon)$ then $\Delta_t$ has $k$ distinct nonzero
  critical values $c_1(t), \ldots, c_k(t)$ coming from $k$
  non-degenerate critical points.  Using arguments as in the proof of
  Lemma~\ref{lem:crit-pts}, these critical values come from smooth
  paths of Morse critical points $q_1(t), \ldots, q_k(t)$, which, in
  turn, come from double points of $\pi(L_t)$.  We compute that:
  \begin{align*}
    c'_i(t) = \partial_t (\Delta_t(q_i(t)))  &= (\partial_t
    \Delta_t)(q_i(t)) + d\Delta_t(q_i(t)) \partial_t(q_i(t)) \\
    &=  (\partial_t  \Delta_t)(q_i(t)) \\
    &= x_n(t) - \tilde{x}_n(t).
  \end{align*}
  Since $L_t$ is embedded, the $x_n$-heights of the double points of
  $\pi(L_t)$ must be different, and hence $c'_i(t) \neq 0$, as desired.
\end{proof}

\begin{proof}[Proof of Lemma~\ref{lem:mono-comm-diag}]
  The scheme of the proof is to introduce an \textbf{extended
    difference function} to express the cobordism $L_{[a,b]}$ in terms of
  level sets.  The extended difference function is defined by
  \begin{equation*}
    \Delta_{ab}(t, \x, x_n, \e, \tilde{x}_n, \tilde{\e}) = 
    \Delta_{t}(\x, x_n, \e, \tilde{x}_n, \tilde{\e}), \quad t\in [a,b].
  \end{equation*}
  The function and
  its level sets have the following properties, as can be seen by
  direct computation and the techniques of Lemma~\ref{lem:crit-pts}:
  \begin{enumerate}
  \item The sublevel sets of $\Delta_{ab}$ satisfy $\Delta_{ab}^\lambda =
    \bigcup_{t \in [a,b]} \Delta^\lambda_t$ and interact with
    $\mathcal{P}_\pm$ in the same way;
  \item $\Delta_{ab}$ is Morse-Bott with a critical submanifold of
    index $N+1$ that can be identified with $L_{[a,b]}$ via the fiber
    critical set $\Sigma_F$.
  \end{enumerate}

  The same constructions as in Section~\ref{sec:defn-cap} then allow
  us to define a map
  \begin{equation*}
    \varphi^\lambda_{ab, +}: H^k(L_{[a,b]}) \to H^{k+N+1}(\Delta^\eta_{ab,+},
    \Delta^\lambda_{ab,+}).
  \end{equation*}
  Similar constructions yield maps $\varphi^\lambda_{ab,-}$ and
  $\Phi^\Lambda_{ab,\pm}$.
  
  To finish the proof, we need only note that the naturality of the
  Mayer-Vietoris sequence, the long exact sequence of a triple, and
  the Gysin sequence shows that the top and bottom parallelograms of
  the following diagram commute, as does the rightmost
  triangle: 
   \begin{equation*} \label{eqn:mono-diag} \xymatrix{ & H^k(L_a)
      \ar[rr]^-{\varphi^{\lambda}_{a,+}}
       & & H^{k+N+1}(\Delta^{\eta}_{a,+}, \Delta^{\lambda}_{a,+}) \ar[dd]^{i^*} \\
      H^k(L_{[a,b]}) \ar[ur]^{j^*_a} \ar[dr]_{j^*_b}
      \ar[rr]^-{\varphi^\lambda_{ab,+}} & & 
      H^{k+N+1}(\Delta^\eta_{ab,+}, \Delta^\lambda_{ab,+})
      \ar[ur]_{J^*_a} \ar[dr]^{J^*_b}
      &  \\
      & H^k(L_b) \ar[rr]_-{\varphi^{\lambda}_{b,+}}  & &
    H^{k+N+1}(\Delta^{\eta}_{b,+}, \Delta^{\lambda}_{b,+})}
  \end{equation*}
\end{proof}

\subsection{Continuity}

By fixing a cohomology class in $H^*(L_{[a,b]})$, not only can we
compare capacities at different levels, but we also create a
continuous, piecewise differentiable function from each capacity.

\begin{proof}[Proof of Continuity]
  Suppose that $a$ and $b$ are generic levels with $0 < a < b$ (the
  proof is entirely similar for negative levels).  Using a
  fiber-preserving diffeomorphism of $\rr^{n-1} \times \rr^{N+1}$ that
  sends $x_n$ to $\frac{a}{t}x_n$, we may modify the functions $F_t$,
  $t \in [a,b]$, to functions $\bar{F}_t$ that agree outside a compact
  set.  An application of the argument in Lemma~\ref{lem:cap-well-def}
  shows that these modified functions yield difference functions
  $\bar{\Delta}_t$ that give the same capacities as before.  It
  follows that for every $\epsilon>0$, there is a $\delta>0$ so that
  if $|s-t| < \delta$, then $\| \bar{\Delta}_s - \bar{\Delta}_t \| <
  \epsilon$. In particular, we obtain the inclusions:
  \begin{equation*}
    \bar{\Delta}_{s, \pm}^{\lambda - \epsilon} \subset
    \bar{\Delta}_{t,\pm}^\lambda \subset \bar{\Delta}_{s,\pm}^{\lambda+\epsilon}.
  \end{equation*}

  These inclusions lead to the following commutative diagram for the
  lower capacities:
  \begin{equation*} \label{eqn:mono} \xymatrix{ & &
      H^{k+N+1}(\bar\Delta^{\eta}_{s,\pm},
      \bar\Delta^{\lambda-\epsilon}_{s,\pm}) \\
      H^k(L) \ar[rr]^-{\varphi^\lambda_{s,\pm} \circ j^*_s}
      \ar[urr]^-{\varphi^{\lambda - \epsilon}_{t, \pm} \circ j^*_t}
      \ar[drr]_-{\varphi^{\lambda+\epsilon}_{s, \pm} \circ j^*_s} & &
      H^{k+N+1}(\bar\Delta^{\eta}_{t,\pm}, \bar\Delta^{\lambda}_{t,\pm})
      \ar[u]_{i^*} \\
      & & H^{k+N+1}(\bar\Delta^{\eta}_{s,\pm},
      \bar\Delta^{\lambda+\epsilon}_{s,\pm}) \ar[u]_{i^*}.  }
  \end{equation*} It follows immediately from the bottom and top
  triangles, respectively, that:
  \begin{equation*} c(s) - \epsilon \leq
    c(t) \leq c(s) + \epsilon,
  \end{equation*} 
  as required.  The proof for the upper capacities is similar.

  The fact that $c(t)$ is monotone shows that its one-sided limits
  exist at a non-generic level $t$, and the proof above with $a =
  t-\epsilon$ and $b = t+\epsilon$ shows that these two limits are
  equal. Thus, the non-generic levels are removable singularities.

  Finally, piecewise differentiability follows from the arguments at
  the end of the proof of Monotonicity in the previous section.
\end{proof}

\subsection{Invariance}

The capacities for a slice $L_a$ depend on the entire Lagrangian $L$
since they are defined in terms of a generating family for $L$.  In
this section, however, we will show that isotoping the Lagrangian
while keeping the slice $L_a$ unchanged will not change the
capacities.  Specifically, let $\psi_t$, $t \in [0,1]$, be a compactly
supported symplectic isotopy of $\rr^{2n}$.  Let $L(t) = \psi_t(L)$ be
the image of $L$ under the isotopy, and suppose that the slice $L_a(t)
= L(t) \cap \{y_n = a\}$ is always equal to $L_a$.

The proof of Invariance, namely that the capacities of $L_a$ do not
depend on $L(t)$, follows easily from the fact that the capacities lie
in the discrete set of the critical values of the difference function
(see Lemma~\ref{lem:cap-at-crit}) and from following proposition about
the continuity of the capacities, whose proof is a slight variation on
the proof of continuity given above:

\begin{prop} \label{prop:cont} With the notation above, for $u \in
  H^k(L_a)$, the capacities $c_\pm^{L(t),a}(u)$ and
  $C_\pm^{L(t),a}(u)$ are continuous in $t$.
\end{prop}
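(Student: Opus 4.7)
The plan is to transcribe the proof of Continuity to the parametric setting of the isotopy $\psi_t$, with a varying family of Lagrangians replacing the varying family of slices. First, I would invoke the parametric version of Theorem~\ref{thm:2n-exist+unique} described in Remark~\ref{rem:gf-deform}. Since a compactly supported symplectic isotopy of $\rr^{2n}$ is automatically Hamiltonian (as $H^1(\rr^{2n}) = 0$), this yields a continuous family of quadratic-at-infinity generating families $F(t) : \rr^n \times \rr^N \to \rr$ for $L(t) = \psi_t(L)$, with $F(t)$ constant in $t$ outside a compact set of $\rr^n \times \rr^N$.

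Next, I would form the associated difference functions $\Delta_a(t)$ from $F_a(t) = F(t) - a\,x_n$. These inherit the property of being independent of $t$ outside a compact set and of varying continuously in the $C^0$ topology, so for any $\epsilon > 0$ there exists $\delta > 0$ such that $|s-t| < \delta$ implies $\|\Delta_a(s) - \Delta_a(t)\|_{C^0} < \epsilon$, and hence
\begin{equation*}
\Delta_{a,\pm}^{\lambda-\epsilon}(s) \;\subset\; \Delta_{a,\pm}^{\lambda}(t) \;\subset\; \Delta_{a,\pm}^{\lambda+\epsilon}(s).
\end{equation*}

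Because the slice $L_a(t) = L_a$ is held fixed, every class $u \in H^k(L_a)$ can be fed directly into the maps $\varphi^\lambda_{a,\pm}(t)$ and $\Phi^\Lambda_{a,\pm}(t)$ built from $\Delta_a(t)$; the critical submanifold $C_a(t)$ is canonically identified with $L_a$ via the immersion $i_{F_a(t)}$, and the Gysin and long-exact-sequence ingredients of $\mathcal I(t)$ depend continuously on $t$. Plugging $u$ into the same sandwich diagram used in the Continuity proof (with the pullback maps $j^*_t$ replaced by the identity on $H^k(L_a)$) yields
\begin{equation*}
c^{L(s),a}_\pm(u) - \epsilon \;\leq\; c^{L(t),a}_\pm(u) \;\leq\; c^{L(s),a}_\pm(u) + \epsilon,
\end{equation*}
and the analogous bounds for $C^{L(t),a}_\pm(u)$.

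The main obstacle is the first step: producing the continuous parametric family $F(t)$ with uniform behavior at infinity so that the sublevel-set comparisons above make sense globally. Once this is granted via the Serre fibration formulation in Remark~\ref{rem:gf-deform}, the naturality of all the Morse-theoretic and algebraic-topological constructions involved in the definition of $\varphi^\lambda_{a,\pm}(t)$ and $\Phi^\Lambda_{a,\pm}(t)$ makes the remaining argument a direct transcription of the previous subsection.
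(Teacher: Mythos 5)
Your proposal is correct and matches the paper's proof, which is essentially a one-liner invoking Remark~\ref{rem:gf-deform} to obtain a parametric family $F(t)$ of quadratic-at-infinity generating families for $L(t)$ and then appealing to the same $C^0$-sandwich argument used for Continuity. Your elaborations --- that the compactly supported symplectic isotopy is Hamiltonian since $H^1(\rr^{2n})=0$, and that the $F(t)$ can be taken to agree outside a compact set so the sublevel-set inclusions make sense globally --- simply make explicit the details the paper leaves to the reader.
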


\begin{proof} As noted in Remark~\ref{rem:gf-deform}, there is a
  $1$-parameter family $F(t)$ of quadratic-at-infinity generating
  families for $L(t)$.  Now use the same proof as in the previous
  section.
\end{proof}

\subsection{Non-Vanishing}

Next, we will prove Non-Vanishing, which asserts that, for $u \neq 0$,
at least one of the four capacities $c_\pm^{L,a}(u)$, $C_\pm^{L,a}(u)$
is nonzero.  The scheme of the proof is to understand the cohomology
of $(\Delta_a^\theta, \Delta_a^{-\theta})$ for large $\theta$, and
then to use this to relate the capacities.

\begin{lem} \label{lem:trivial-hom} For $\theta \gg 0$,
  $H^*(\Delta_a^\theta, \Delta_a^{-\theta}) = 0$.
\end{lem}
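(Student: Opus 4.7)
The plan is to show that for $\theta \gg 0$ the sublevel sets $\Delta_a^{\pm\theta}$ coincide on the nose with the sublevel sets $G^{\pm\theta}$ of the linear-quadratic model of $\Delta_a$ at infinity, and then to observe that these model sublevel sets are contractible. The vanishing $H^*(\Delta_a^\theta, \Delta_a^{-\theta}) = 0$ then follows immediately from contractibility of both spaces.

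First I would identify the model at infinity. Because $F$ is quadratic-at-infinity with quadratic form $Q$ in the fiber variables, outside a compact set $K \subset \rr^{n-1} \times \rr^{1+N} \times \rr^{1+N}$ the difference function satisfies $\Delta_a = G$, where
\[
G(\x, x_n, \e, \tilde x_n, \tilde \e) = a(\tilde x_n - x_n) + Q(\e) - Q(\tilde \e).
\]
For the relevant case $a \neq 0$, the function $G$ has no critical points; moreover, introducing $u = \tilde x_n - x_n$ realizes each sublevel set $G^\mu$ as $\{au \leq \mu - Q(\e) + Q(\tilde\e)\}$, a half-space fibration in the $u$-direction over the remaining coordinates, which is evidently contractible for every $\mu$.

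Next I would show that $\Delta_a^{\pm\theta}$ and $G^{\pm\theta}$ coincide set-theoretically once $\theta$ is large enough. Both $\Delta_a$ and $G$ are bounded on the compact set $K$; pick $\theta$ strictly greater than $\max_K |\Delta_a|$ and $\max_K |G|$. Then on $K$ both functions exceed $-\theta$, so $\Delta_a^{-\theta}$ and $G^{-\theta}$ are each disjoint from $K$ and hence equal to their common restriction to the complement of $K$, where $\Delta_a \equiv G$. Similarly $\Delta_a^\theta$ and $G^\theta$ both contain all of $K$ and agree outside $K$, so they coincide. Thus $\Delta_a^{\pm\theta} = G^{\pm\theta}$ are both contractible, giving $H^*(\Delta_a^\theta, \Delta_a^{-\theta}) = 0$.

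There is not much of an obstacle in the $a \neq 0$ case, where the argument is essentially a set-theoretic observation about the tails of the sublevel sets. The only delicate point is the tacit assumption $a \neq 0$: if the lemma is intended to cover $a = 0$ as well, then $G$ degenerates to the indefinite quadratic $Q(\e) - Q(\tilde\e)$ of signature $(N,N)$, whose sublevel sets are no longer contractible, and a separate Morse-theoretic argument (matching the handle attachments coming from the $+\theta$ and $-\theta$ ends in the model) would be needed.
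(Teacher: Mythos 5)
Your key step is the claim that $\Delta_a$ coincides with the model
$G(\x, x_n, \e, \tilde x_n, \tilde\e) = a(\tilde x_n - x_n) + Q(\e) - Q(\tilde\e)$
outside a compact set $K$. That claim is false, and the argument does not survive its failure. Quadratic-at-infinity means $F(\x, x_n, \e) = Q(\e)$ outside a compact set $K_0 \subset \rr^{n} \times \rr^{N}$ in the variables of $F$; since $\Delta_a$ involves two copies of $F_a$ evaluated at $(\x, x_n, \e)$ and $(\x, \tilde x_n, \tilde\e)$, equality $\Delta_a = G$ requires \emph{both} of those points to escape $K_0$. The locus where they disagree is the set where at least one lies in $K_0$, which is unbounded (fix $(\x, x_n, \e) \in K_0$ with $F \neq Q$ there and let $\tilde x_n \to \infty$). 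Worse, on that unbounded set $\Delta_a$ itself is unbounded: with $(\x, x_n, \e)$ pinned and $\tilde x_n \to \infty$ one has $\Delta_a = F_a(\x, x_n, \e) + a\tilde x_n - Q(\tilde\e) \to \pm\infty$. So there is no $\theta$ with $\theta > \max_K |\Delta_a|$, and the asserted set equality $\Delta_a^{\pm\theta} = G^{\pm\theta}$ cannot be obtained this way.

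What \emph{is} true, and what a salvage would have to use, is that the difference $\Delta_a - G = [F - Q](\x, x_n, \e) - [F - Q](\x, \tilde x_n, \tilde\e)$ is globally bounded, say by $M$, since $F - Q$ is compactly supported. That yields only the interleaving $G^{\lambda - M} \subset \Delta_a^{\lambda} \subset G^{\lambda + M}$, and one must then combine contractibility of the $G$-sublevel sets with a deformation-retract argument for $\Delta_a$ past its highest and lowest critical values (as in the usual linear-quadratic-at-infinity framework) to squeeze out that $H^*(\Delta_a^\theta, \Delta_a^{-\theta}) = 0$. That is a legitimate route but a genuinely different (and more delicate) one than the set-theoretic identification you wrote. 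Note also that the paper's own proof is different still: it exploits flatness-at-infinity to find a height $b$ where the slice $L_b$ is empty, so $\Delta_b$ has no critical points at all, and then transports the vanishing of $H^*(\Delta_b^\theta, \Delta_b^{-\theta})$ back to level $a$ via the critical non-crossing Lemma~\ref{lem:loc-1-param}. Your closing caveat about $a = 0$ is correct but moot: the lemma is only invoked with $a \neq 0$, and the paper explicitly assumes $a > 0$ (or symmetrically $a < 0$) in its proof.
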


A component in the proof of this lemma is the following ``critical
non-crossing'' lemma, whose proof can be found, for example, almost
word-for-word in the proof of Lemma 3.10 of \cite{lisa:links}.
 
\begin{lem} \label{lem:loc-1-param} Consider a smooth $1$-parameter
  family of difference functions $\Delta_a(t)$ that arises from a
  $1$-parameter family of quadratic-at-infinity generating families
  $F(t)$ and levels $a(t)$.  Suppose $\tau, \sigma: [0,1] \to \rr$ are
  continuous paths such that $\tau(t)$ and $\sigma(t)$ are regular
  values of $\Delta_a(t)$ with $\tau(t) < \sigma(t)$ for all $t$.
  Then for any $s, t \in [0,1]$, there is an isomorphism
  \begin{equation*} H^*\left( \Delta_a(s)^{\sigma(s)},
      \Delta_a(s)^{\tau(s)} \right) \simeq H^*\left(
      \Delta_a(t)^{\sigma(t)}, \Delta_a(t)^{\tau(t)} \right).
  \end{equation*}
\end{lem}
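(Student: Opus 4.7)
The plan is to establish local constancy of the cohomology along $[0,1]$ and then invoke compactness. For each $t_0 \in [0,1]$, I will produce an open neighborhood $U \ni t_0$ together with a canonical isomorphism $H^*(\Delta_a(t)^{\sigma(t)}, \Delta_a(t)^{\tau(t)}) \simeq H^*(\Delta_a(t_0)^{\sigma(t_0)}, \Delta_a(t_0)^{\tau(t_0)})$ for every $t \in U$. The stated isomorphism for arbitrary $s, t \in [0,1]$ then follows by chaining finitely many such local identifications along a path in $[0,1]$.

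To construct $U$, I exploit the fact that each $\Delta_a(t_0)$ is linear-quadratic-at-infinity, which forces its set of critical values to be discrete and its critical set within any bounded level band to be compact. Hence there exists $\epsilon > 0$ such that $\Delta_a(t_0)$ has no critical values in either $[\tau(t_0) - 2\epsilon, \tau(t_0) + 2\epsilon]$ or $[\sigma(t_0) - 2\epsilon, \sigma(t_0) + 2\epsilon]$. The family $F(t)$ is a smooth deformation through quadratic-at-infinity generating families that agree outside a common compact set (Theorem~\ref{thm:2n-exist+unique} and Remark~\ref{rem:gf-deform}), so $\|\Delta_a(t) - \Delta_a(t_0)\|_{C^1}$ tends to $0$ as $t \to t_0$ and the critical values of $\Delta_a(t)$ depend continuously on $t$. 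Combining this with continuity of $\tau$ and $\sigma$, I shrink $U$ to ensure that for every $t \in U$: (i) $|\tau(t) - \tau(t_0)| < \epsilon$ and $|\sigma(t) - \sigma(t_0)| < \epsilon$; (ii) $\Delta_a(t)$ has no critical values in $[\tau(t_0) - \epsilon, \tau(t_0) + \epsilon]$ or $[\sigma(t_0) - \epsilon, \sigma(t_0) + \epsilon]$; and (iii) the inclusions $\Delta_a(t_0)^{\tau(t_0) - \epsilon} \subset \Delta_a(t)^{\tau(t)} \subset \Delta_a(t_0)^{\tau(t_0) + \epsilon}$ and the analogous inclusions for $\sigma$ both hold.

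Next, I apply the single-function analogue of Lemma~\ref{lem:def-retr}: on any strip where $\Delta_a(t)$ has no critical values, the upper sublevel set deformation retracts onto the lower one. The proof is simpler than that of Lemma~\ref{lem:def-retr}, since no splitting into $\mathcal P_\pm$ is needed; one follows the normalized gradient flow and uses linear-quadratic-at-infinity behavior to control that its speed is bounded away from $0$ on the relevant strip. This produces, within $\Delta_a(t)$, a retract of $\Delta_a(t)^{\tau(t_0) + \epsilon}$ onto $\Delta_a(t)^{\tau(t_0) - \epsilon}$, and similarly for the levels $\sigma(t_0) \pm \epsilon$; applying the same retracts at $t_0$ and combining with (iii) yields the desired homotopy equivalence of pairs $(\Delta_a(t)^{\sigma(t)}, \Delta_a(t)^{\tau(t)}) \simeq (\Delta_a(t_0)^{\sigma(t_0)}, \Delta_a(t_0)^{\tau(t_0)})$ and hence the isomorphism on $H^*$.

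The main obstacle I anticipate is verifying that critical values of $\Delta_a(t)$ really do vary continuously with $t$, as used in (ii) --- in particular, that no critical value can appear from or escape to infinity as $t$ moves. This is controlled by the fact that $F(t)$ and $F(t_0)$ agree outside a fixed compact subset of $\rr^n \times \rr^N$, which confines all critical points of $\Delta_a(t)$ lying in any bounded level band to a common compact set (away from the critical value $0$ of the Morse-Bott submanifold $C_a$, which is harmless since $\tau(t), \sigma(t) \neq 0$ by regularity). Continuity of non-degenerate critical points via the implicit function theorem on this compact domain then yields (ii). With this in hand, the deformation-retract chain above gives local constancy, and compactness of $[0,1]$ completes the proof.
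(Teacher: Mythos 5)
Your argument is correct and takes the same route as the paper's: the paper does not prove this lemma in-text but defers it to Lemma 3.10 of \cite{lisa:links}, whose proof is precisely this local-constancy argument (sandwich inclusions between sublevel sets at nearby parameters, deformation retracts across critical-value-free bands, then compactness of $[0,1]$). One small imprecision: to establish your step (ii) you should not rely on the implicit function theorem applied to non-degenerate critical points, since a $1$-parameter family need not remain Morse; instead, combine the compact confinement you already set up with a uniform positive lower bound for $\|\nabla \Delta_a(t_0)\|$ on the preimage of the band intersected with that compact set, which persists under the $C^1$-small perturbation from $t_0$ to $t$ and directly excludes critical values of $\Delta_a(t)$ from the band.
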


\begin{proof}[Proof of Lemma~\ref{lem:trivial-hom}] Suppose $a>0$; the
  proof for $a<0$ is entirely similar. Choose $\theta$ large enough
  so that the magnitudes of the critical values of $\Delta_b$ are less
  than $\theta$, for all $b > a$.  Choose $b>a$ such that the slice
  $L_b$ is empty; this is possible since $L$ is flat-at-infinity.
  Since $L_b$ is empty, Lemma~\ref{lem:crit-pts} shows that the
  difference function $\Delta_b$ has no critical points, and hence
  that $H^*(\Delta_b^\theta, \Delta_b^{-\theta}) = 0$.  We can then
  conclude, using Lemma~\ref{lem:loc-1-param} with the constant paths
  $\pm \theta$, that:
  \begin{equation*} H^*(\Delta_a^\theta, \Delta_a^{-\theta}) \simeq
    H^*(\Delta_b^\theta, \Delta_b^{-\theta}) = 0.
  \end{equation*}
\end{proof}

We now leverage Lemma~\ref{lem:trivial-hom} to relate the spaces
involved in the definition of the capacities.

\begin{lem} \label{lem:exactness} For $\theta \gg 0$,
  $\ker p_a^{-\theta} = \ker D_a^\theta$.
\end{lem}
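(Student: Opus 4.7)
The plan is to use Lemma~\ref{lem:trivial-hom} to show that $D_a^\theta$ factors as an isomorphism precomposed with $p_a^{-\theta}$, which makes the equality of kernels immediate.

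First, choose $\theta$ large enough that Lemma~\ref{lem:trivial-hom} applies, so that $H^{*}(\Delta_a^\theta, \Delta_a^{-\theta}) = 0$ in all relevant degrees. Then consider the long exact sequence of the triple $(\Delta_a^\theta, \Delta_a^\eta, \Delta_a^{-\theta})$, whose relevant portion reads
\begin{equation*}
H^{k+N+1}(\Delta_a^\theta, \Delta_a^{-\theta}) \to H^{k+N+1}(\Delta_a^\eta, \Delta_a^{-\theta}) \xrightarrow{\delta} H^{k+N+2}(\Delta_a^\theta, \Delta_a^\eta) \to H^{k+N+2}(\Delta_a^\theta, \Delta_a^{-\theta}).
\end{equation*}
The vanishing of the outer terms forces the connecting homomorphism $\delta$ to be an isomorphism.

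Next, I would apply naturality of the connecting homomorphism to the morphism of triples $(\Delta_a^\theta, \Delta_a^\eta, \Delta_a^{-\theta}) \hookrightarrow (\Delta_a^\theta, \Delta_a^\eta, \Delta_a^{-\eta})$ induced by the inclusion $\Delta_a^{-\theta} \subset \Delta_a^{-\eta}$. This yields a commutative square whose top row is $D_a^\theta : H^{k+N+1}(\Delta_a^\eta, \Delta_a^{-\eta}) \to H^{k+N+2}(\Delta_a^\theta, \Delta_a^\eta)$, whose bottom row is $\delta$, whose left vertical map is precisely $p_a^{-\theta}$ (the change-of-pair map induced by the same inclusion), and whose right vertical map is the identity on $H^{k+N+2}(\Delta_a^\theta, \Delta_a^\eta)$. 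Commutativity gives $D_a^\theta = \delta \circ p_a^{-\theta}$, and since $\delta$ is an isomorphism we conclude $\ker D_a^\theta = \ker p_a^{-\theta}$.

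The only step that requires care is the commutativity of that square. This is the standard naturality of the triple connecting homomorphism under morphisms of triples, which can be verified on cochains in one line: represent a class in $H^{k+N+1}(\Delta_a^\eta, \Delta_a^{-\eta})$ by a cochain on $\Delta_a^\eta$ vanishing on $\Delta_a^{-\eta}$ (hence automatically vanishing on the smaller set $\Delta_a^{-\theta}$), extend it to $\Delta_a^\theta$, and take the coboundary; the very same extension computes both $D_a^\theta$ of the original class and $\delta$ of its image under $p_a^{-\theta}$.
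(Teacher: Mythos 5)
Your proof is correct and follows essentially the same route as the paper: factor $D_a^\theta$ through $p_a^{-\theta}$ via the connecting homomorphism of the triple $(\Delta_a^\theta, \Delta_a^\eta, \Delta_a^{-\theta})$, and use Lemma~\ref{lem:trivial-hom} to see that this connecting homomorphism is an isomorphism. The paper presents the same factorization as a commutative triangle and simply asserts commutativity, whereas you justify it explicitly by naturality of the connecting homomorphism and a cochain-level check --- a mild elaboration rather than a different argument.
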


\begin{proof} 
  The connecting homomorphisms in the exact sequences of the triples
  $(\Delta_a^\theta, \Delta_a^{\eta}, \Delta_a^{-\eta})$ and
  $(\Delta_a^\theta, \Delta_a^{\eta}, \Delta_a^{-\theta})$ together
  with the map $p^{-\theta}_a$ in the exact sequence of the triple
  $(\Delta_a^\eta, \Delta_a^{-\eta}, \Delta_a^{-\theta})$ fit
  together in the following commutative diagram:
  \begin{equation*}
    \xymatrix@!R=4pt{
      & H^{k+N+1}(\Delta_a^\eta, \Delta_a^{-\theta}) \ar[dd]^{d^*} \\ 
      H^{k+N+1}(\Delta_a^\eta, \Delta_a^{-\eta}) \ar[ur]^{p^{-\theta}_a}
      \ar[dr]_{D_a^\theta} & \\
      & H^{k+N+2}(\Delta_a^\theta, \Delta_a^{\eta}).
    }
  \end{equation*}
  By Lemma~\ref{lem:trivial-hom}, the connecting homomorphism $d^*$ at
  the right side of the diagram is an isomorphism, so $\ker
  D_a^\theta = \ker d^* \circ p^{-\theta}_a = \ker p^{-\theta}_a$,
  as desired.
\end{proof}

\begin{proof}[Proof of Non-Vanishing] Suppose that $0
  \neq u \in H^k(L_a)$ and $c_+^{L,a}(u) = c_-^{L,a}(u) = 0$.  Then,
  by Corollary \ref{cor:vanishing_capacities}, for all $\lambda$, $u
  \notin \ker p_{a}^\lambda$.  In particular, for $\theta \gg 0$,
  Lemma~\ref{lem:exactness} implies that $u \notin \ker
  p_{a}^{-\theta} = \ker D_a^\theta$.  So, again by Corollary
  \ref{cor:vanishing_capacities}, either $C_+^{L,a}(u) \neq 0$ or
  $C_-^{L,a}(u) \neq 0$.
\end{proof}

\subsection{Conformality}

The proof of conformality follows immediately from two facts:
\begin{enumerate}
\item If $F$ generates $L$ then
  $$\bar{F}(\x, x_n, \e) = \beta^2 F(\frac{1}{\beta}\x, \frac{1}{\beta}
  x_n, \e)$$ generates $\beta L$.
\item The following sublevel sets are diffeomorphic: 
  $$\Delta_a^\lambda \simeq \bar{\Delta}_{\beta a}^{\beta^2 \lambda}.$$
\end{enumerate}

% **********
\section{Applications and Extensions}
\label{sec:apps}

% *****
\subsection{Obstructions to Cobordisms}
\label{ssec:obstructions}

In order to prove Theorems~\ref{thm:8cobord} and
\ref{thm:higher-dim8s}, we begin with two useful lemmas. The first is
a gluing result:

\begin{lem}[\cite{arnold-paper}]
  \label{lem:gluing}
  Let $L, L' \subset \rr^{2n}$ be two Lagrangians that are transverse
  to and agree on $\{ y_n = a \}$.
  For all $\epsilon>0$, there exists a Lagrangian $L''$ such that:
  \begin{enumerate}
  \item $L'' \cap \{y_n < a - \epsilon \} = L \cap \{y_n < a -
    \epsilon \}$ and 
  \item $L'' \cap \{y_n > a + \epsilon \} = L' \cap \{y_n > a +
    \epsilon \}$.
  \end{enumerate}
\end{lem}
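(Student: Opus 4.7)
The plan is to construct $L''$ by a local deformation supported in the $\epsilon$-slab around $\{y_n = a\}$, using a Weinstein tubular neighborhood around the common slice $S := L \cap \{y_n = a\}$ and a cutoff interpolation of primitive functions.

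First I would set up a local symplectic model. The slice $S$ is an $(n-1)$-dimensional isotropic submanifold on which $L$ and $L'$ coincide as sets, though their tangent spaces along $S$ generically differ in the direction transverse to $T_pS$. Choose an $n$-dimensional Lagrangian submanifold $\widehat{L} \subset \rr^{2n}$ containing $S$ over which both $L$ and $L'$ are graphical in a tubular neighborhood of $S$; existence follows from a pointwise-generic choice along $S$ glued together by a partition of unity. By the Lagrangian tubular neighborhood theorem, a symplectic neighborhood of $\widehat{L}$ is identified with a neighborhood of the zero section of $T^*\widehat{L}$, under which $L$ and $L'$ become graphs of closed $1$-forms, and hence, on a contractible neighborhood $U$ of $S$, graphs of $df$ and $df'$ for smooth primitives $f, f' \colon U \to \rr$. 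The condition $L \cap \{y_n = a\} = L' \cap \{y_n = a\} = S$ forces $df|_S = df'|_S$.

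Next I would interpolate via a cutoff in $y_n$. Let $\widetilde{y}_n \colon \widehat{L} \to \rr$ denote the restriction of the coordinate function $y_n$, and choose a smooth $\chi \colon \rr \to [0,1]$ with $\chi \equiv 0$ on $(-\infty, a - \epsilon]$ and $\chi \equiv 1$ on $[a + \epsilon, \infty)$. Define
\[
f''(q) := \bigl(1 - \chi(\widetilde{y}_n(q))\bigr)\, f(q) + \chi(\widetilde{y}_n(q))\, f'(q).
\]
The Lagrangian $L''_{\mathrm{loc}} := \mathrm{graph}(df'') \subset T^*\widehat{L}$ coincides with $\mathrm{graph}(df)$ where $\widetilde{y}_n \leq a - \epsilon$ and with $\mathrm{graph}(df')$ where $\widetilde{y}_n \geq a + \epsilon$. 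Pushing back through the Weinstein symplectomorphism and patching with $L$ on $\{y_n < a - \epsilon\}$ and with $L'$ on $\{y_n > a + \epsilon\}$ outside the chart yields a smooth globally defined $L''$ with the required properties; the Lagrangian condition holds throughout because $df''$ is exact.

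The main obstacle is the first step, namely simultaneously expressing $L$ and $L'$ as graphs over a common Lagrangian base $\widehat{L}$ in a tubular neighborhood thick enough to contain the $\epsilon$-slab. Because $T_pL$ and $T_pL'$ along $S$ need not agree in the direction transverse to $T_pS$, the Lagrangian $L'$ is not in general $C^1$-close to $L$ near $S$, so $\widehat{L}$ cannot be taken to be $L$ itself. The relevant transversality is however pointwise-open, so a suitable $\widehat{L}$ can be pieced together along $S$ by a partition-of-unity argument; shrinking $\epsilon$ if necessary then ensures the entire slab lies in the Weinstein chart. Once this setup is fixed, the cutoff construction is standard and the Lagrangian condition is automatic.
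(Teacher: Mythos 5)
Your overall strategy --- interpolating primitives of the two Lagrangians inside a Weinstein neighborhood of a common Lagrangian containing $S$ --- is a plausible route, and is in the same spirit as the argument the paper defers to \cite{arnold-paper} (which is not reproduced here, so I can only judge your sketch on its own terms). However, the step you flag as the ``main obstacle'' is not resolved by the device you propose. A partition of unity cannot piece together pointwise choices of Lagrangian planes through $T_pS$: being Lagrangian is \emph{not} preserved under convex combination. Concretely, the Lagrangian planes of $T_p\rr^{2n}$ containing $T_pS$ form a circle (the Lagrangian Grassmannian of the two-dimensional reduction $T_pS^{\omega}/T_pS$), and the transversality conditions you impose cut out open arcs of that circle; averaging pointwise-generic choices does not produce a section of this arc bundle. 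You need a genuine construction here --- e.g.\ exhibit an explicit smooth family of Lagrangian complements, or first perform a preliminary local isotopy of $L'$ supported in an arbitrarily thin slab so that $T_pL' = T_pL$ along $S$, after which $L'$ is graphical over $L$ itself and the Weinstein tube of $L$ suffices.

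There is a second gap in the cutoff step. Your $\chi$ depends on $\widetilde{y}_n(q) = y_n|_{\widehat{L}}(q)$, a function of the \emph{base point}, while the conclusion of the lemma constrains the $y_n$-coordinate of the \emph{point of $L''$}, i.e.\ of the image of $(q, df''_q)$ under the Weinstein symplectomorphism --- these two quantities agree only on the zero section. Moreover $df'' = (1-\chi)\,df + \chi\,df' + (f'-f)\,\chi'\,d\widetilde{y}_n$ carries an extra term with $\chi' = O(1/\epsilon)$, so you must simultaneously (i) control $|f - f'|$ using the fact that $f - f'$ vanishes to second order along $S$, (ii) shrink the transition interval of $\chi$ relative to $\epsilon$, and (iii) shrink the tube radius so that the interpolated graph stays in the Weinstein chart and the discrepancy between $y_n$ and $\widetilde{y}_n$ stays below $\epsilon$. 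Finally, the patching outside the chart requires that $L \cap \{\,|y_n - a| \le \epsilon\,\}$ and $L' \cap \{\,|y_n - a| \le \epsilon\,\}$ lie entirely inside the chart for $\epsilon$ small; this properness is true in the paper's applications but should be made explicit, since the lemma as stated allows arbitrary Lagrangians.
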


\noindent
Note that the proposition in \cite{arnold-paper} is stated for
Lagrangians in $\rr^4$, but the proof applies to higher dimensions.
In addition, in \cite{arnold-paper}, the Lagrangians are assumed to be
flat-at-infinity and planar, but the proof can be carried out near
$L_a = L'_a$ in a standard neighborhood of either one of the
Lagrangians.

The second is a local computation of critical values of $\Delta_a$
from the geometry of $\pi(L_a)$:

\begin{lem} \label{lem:crit-val} Suppose that $q=(\x, x_n, \e,
  \tilde{x}_n, \tilde{\e})$ is a non-degenerate critical point of
  $\Delta_a$ and that there exists a path $\gamma_q: [0,1] \to
  \Sigma_{F_a}$ that begins at $(\x, \tilde{x}_n, \tilde{\e})$ and
  ends at $(\x, x_n, \e)$.  Let $i_a: \Sigma_{F_a} \to \pi(L_a)$
  denote the inclusion as in equation (\ref{eqn: crit_inclusion}).
  Then the critical value of $(\x, x_n, \e, \tilde{x}_n,
  \tilde{\e})$ is:
  \begin{equation*} \label{eqn:delta-area} \Delta_a(\x, x_n,
    \mathbf{e}, \tilde{x}_n, \tilde{\mathbf{e}}) = \int_{ i_a
      \circ \gamma_q} \y \, d\x.
  \end{equation*}
\end{lem}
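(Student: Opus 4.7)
The plan is to recognize the restriction $F_a|_{\Sigma_{F_a}}$ as a primitive for the Liouville form $\y \, d\x$ pulled back along $i_a$, and then evaluate this primitive on the endpoints of $\gamma_q$.

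Write $\gamma_q(t) = (\x(t), x_n(t), \e(t))$ with $\gamma_q(0) = (\x, \tilde{x}_n, \tilde{\e})$ and $\gamma_q(1) = (\x, x_n, \e)$; note that because $q$ is a critical point of $\Delta_a$, both endpoints lie in $\Sigma_{F_a}$ and the $\x$-component is the same at both ends. By the fundamental theorem of calculus applied to the smooth function $t \mapsto F_a(\gamma_q(t))$, I have
\begin{equation*}
    \Delta_a(q) = F_a(\x, x_n, \e) - F_a(\x, \tilde{x}_n, \tilde{\e}) = \int_0^1 \frac{d}{dt} F_a(\gamma_q(t)) \, dt.
\end{equation*}

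Next I apply the chain rule to the integrand:
\begin{equation*}
    \frac{d}{dt} F_a(\gamma_q(t)) = \pd{F_a}{\x}\cdot \dot{\x}(t) + \pd{F_a}{x_n}\cdot \dot x_n(t) + \pd{F_a}{\e}\cdot \dot\e(t).
\end{equation*}
Since $\gamma_q(t) \in \Sigma_{F_a}$ for all $t$, the fiber derivatives $\pd{F_a}{x_n}$ and $\pd{F_a}{\e}$ vanish identically along the path, leaving only the term $\pd{F_a}{\x}\cdot \dot{\x}(t)$. Under the immersion $i_a$ from equation (\ref{eqn: crit_inclusion}), the $\y$-coordinate of $i_a(\gamma_q(t))$ is precisely $\pd{F_a}{\x}(\gamma_q(t))$, so this integrand is exactly $\y(t)\cdot \dot\x(t)$, where $(\x(t), \y(t)) = i_a(\gamma_q(t))$.

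Assembling these two steps gives
\begin{equation*}
    \Delta_a(q) = \int_0^1 \y(t)\cdot \dot\x(t)\, dt = \int_{i_a \circ \gamma_q} \y\, d\x,
\end{equation*}
as claimed. There is no real obstacle here beyond bookkeeping: the only substantive input is the defining property of $\Sigma_{F_a}$ together with the local formula (\ref{eqn: crit_inclusion}) for $i_a$, which together make $F_a|_{\Sigma_{F_a}}$ a primitive for the pullback of $\y\, d\x$.
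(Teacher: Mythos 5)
Your proof is correct and takes essentially the same route as the paper: the paper compresses the chain-rule computation into the single assertion $i_a^*(\y\,d\x) = dF_a$ on $\Sigma_{F_a}$ and then integrates, whereas you unpack exactly that computation by applying the fundamental theorem of calculus and observing that the fiber derivatives $\pd{F_a}{x_n}$ and $\pd{F_a}{\e}$ vanish along the path. The content is the same; yours is just a more explicit version of the paper's one-line argument.
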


\begin{proof} A direct computation using equation (\ref{eqn:
    crit_inclusion}) shows that $i_a^* (\y \, d\x) = dF_a$, and hence
  we have:
  \begin{equation*} \label{eqn:delta-FTC} \Delta_a(\x, x_n,
    \e, \tilde{x}_n, \tilde{e}) = \int_{\gamma_q} dF_a =
    \int_{i_a \circ \gamma_q} \y \, d\x.
  \end{equation*}
\end{proof}

Note that by Stokes' Theorem, this critical value can be thought of as
the negative symplectic area of a $2$-chain bounded by the loop $i_a
\circ \gamma_q$.  In particular, the critical values of $\Delta_a$ are
determined by the slice $L_a$.

\begin{exam} \label{exam:diagram-crit-vals} In Example
  \ref{exam:explicit_crit_values}, we calculated the critical values
  of the non-degenerate critical points of an explicit generating
  family for a Lagrangian with slices agreeing $8^1_-(r)$.  Using
  Lemma~\ref{lem:crit-pts}, we see that for {\it any} difference
  function $\Delta_a$ associated to {\it any} Lagrangian having a
  slice $L_a$ agreeing with $8^1_-(r)$, the non-degenerate critical
  points come from the double point of $\proj(L_a)$: one with $x_2 >
  \tilde x_2$ and another with $ x_2 < \tilde x_2$.  The loop $i_a
  \circ \gamma_q$ of Lemma~\ref{lem:crit-val} associated to the point
  in $\mathcal P_-$ (where $x_2 > \tilde x_2$) will be oriented
  clockwise, and thus this critical point has critical value $v > 0$
  equal to the area of one of the lobes of our projected figure-8
  curve.  The loop $i_a \circ \gamma_q$ associated to the critical
  point in $\mathcal P_+$ will be the same curve (but now oriented
  counterclockwise), and so the critical point will have critical
  value $-v < 0$.  $\diamond$
\end{exam}

\begin{proof}[Proof of Theorem~\ref{thm:higher-dim8s}] 
  First notice that, for any $s > 0$, if there is an unknotted planar
  Lagrangian $L$ so that $L_a = i_a(8_-^{n-1}(s))$, then $a > 0$;
  otherwise, it would be possible to construct an (exact) embedded
  Lagrangian sphere as follows.  If $a < 0$, then there exists a
  Lagrangian disk in $\{ y_n \leq a\}$ with boundary
  $i_a(8_-^{n-1}(s))$.  By the construction in
  Example~\ref{exam:cylinder_construct}, for some $a'>0$, there exists
  a Lagrangian disk $\{ y_n \geq a' \}$ with boundary
  $i_{a'}(8_-^{n-1}(s))$.  Using Lemma~\ref{lem:gluing}, translations
  of these disks can be glued to form an embedded (exact) Lagrangian
  sphere, which is impossible for $n > 1$ by a result of Gromov
  \cite{gromov:hol}.
 
  Next notice that, for any $s > 0$, if there is an unknotted planar
  Lagrangian $L$ so that $L_a = i_a(8_-^{n-1}(s))$, then $C_+^{L,a}(u)
  = 0$ and $c_-^{L,a}(u) = 0$, for all $u \in H^k(L_a)$. This follows
  from Monotonicity and the facts that $a > 0$ and that for $\tau \gg
  0$, we have $L_\tau = \emptyset$.

  In addition, notice that if there exists an unknotted planar
  Lagrangian $L \subset \rr^{2n}$ so that $L_a = i_a(8_-^{n-1}(s))$,
  then the critical values of $\Delta_a$ are $0, \pm v_s$ where $v_s >
  0$ and does not depend on the height $a$.  The fact that there are
  three critical values of $\Delta_a$ follows from
  Lemma~\ref{lem:crit-pts} since $\pi(L_a)$ has a single double point;
  if all critical values were $0$ then there would be a contradiction
  to Non-Vanishing.  The fact that the critical value $v_s > 0$ does
  not depend on the height $a$ follows from Lemma~\ref{lem:crit-val}.

  For fixed $r < R$, let $v_r$ and $v_R$ be the positive critical
  values associated to the difference functions for heights giving
  slices $8_-^{n-1}(r)$ and $8_-^{n-1}(R)$, respectively.  We can then
  deduce that $v_r < v_R$ as follows.  By the construction in
  Example~\ref{exam:cylinder_construct}, there exists an unknotted
  planar Lagrangian $L\subset \rr^{2n}$ and $a < b$ so that
  $$L_a = i_{a}(8_-^{n-1}(R)), \qquad L_{b} = i_{b}(8_-^{n-1}(r)).$$
  Non-vanishing implies that for all non-trivial $u \in
  H^k(L_{[a,b]})$, $C_-(j_b^*u) = v_r$ or $c_+(j_b^*u) = -v_r$.  If
  $C_-(j_b^*u) = v_r$, then by Monotonicity, $C_-(j_a^*u) = v_R >
  v_r$.  Similarly, $c_+(j_b^*u) = -v_r$ implies, by Monotonicity, $
  c_+(j_a^*u) = -v_R < -v_r$, again implying $v_R > v_r$.

  The desired result now follows by contradiction: suppose there
  exists an unknotted planar Lagrangian $L \subset \rr^{2n}$ and $a <
  b$ so that
  $$L_a = i_a(8_-^{n-1}(r)), \qquad L_b =  i_b(8_-^{n-1}(R)), \qquad r \leq R.$$
  Repeating the Non-vanishing and Monotonicity arguments as above, we
  find that $v_R < v_r$.  This shows that $r = R$ is not possible and
  gives us a contradiction to the above paragraph when $r < R$.
\end{proof}

\begin{rem} The above proof uses in an essential way that there are
  just two non-zero critical values of $\Delta_a$.  If, for example,
  it were true that every flat-at-infinity planar Lagrangian in
  $\rr^{2n}$ is unknotted, then the proof of anti-symmetry in
  \cite{arnold-paper} would extend to higher dimensional cobordisms
  between many different pairs of submanifolds.
\end{rem}

To prove Theorem~\ref{thm:8cobord}, which has no extendibility
hypothesis on the cobordism, we need the following geometric result,
whose proof is a simple modification of
\cite[Theorem 6.4]{chantraine} with the writhe taking the place of the
Thurston-Bennequin number. 

\begin{lem}
  \label{lem:writhe}
  If $L$ be an embedded Lagrange cobordism between the slices $L_a$
  and $L_b$ with $a < b$, then
  $$\chi(L) = \writhe(L_b) - \writhe(L_a),$$
  where the writhe $\writhe$ is calculated with respect to the blackboard
  framing of the $x_1y_1$ projection.
\end{lem}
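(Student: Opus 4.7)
The plan is to adapt Chantraine's proof of the Legendrian version of this identity (\cite[Theorem 6.4]{chantraine}), substituting the blackboard framing and the writhe for the Reeb framing and the Thurston--Bennequin number. The core idea is to define a ``blackboard pushoff'' $L^{\#}$ of $L$ in $\rr^4$ and to count the algebraic self-intersection $L \cdot L^{\#}$ in two different ways: once using that $L$ is Lagrangian (yielding $\chi(L)$) and once using a boundary linking argument (yielding the writhes of the two slices).

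For the construction of $L^{\#}$, along each slice $L_t \subset \{y_2 = t\}$ (for $t = a, b$) define a vector field $\nu_t$ that, at each $p \in L_t$, points in the direction of the blackboard normal to $\pi(L_t) \subset \rr^2_{x_1,y_1}$, lifted as a horizontal vector in $\rr^4$. This is nowhere-vanishing and normal to $L$, so it defines a section of $\nu L|_{\partial L}$; extend to a generic section $s$ of $\nu L$, and let $L^{\#}$ be its image under the tubular neighborhood identification. On the one hand, the canonical Lagrangian isomorphism $\nu L \cong T^*L \cong TL$ (the second using a Euclidean metric) transports $s$ to a tangent vector field $X$ on $L$ whose signed zero count equals $L \cdot L^{\#}$. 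A direct local computation shows that along $\partial L$, $X$ is homotopic through nowhere-vanishing fields to the outward-pointing tangent to $L$, so the Poincar\'e--Hopf theorem for surfaces with boundary yields $L \cdot L^{\#} = \chi(L)$. On the other hand, $\partial L^{\#} = L_b^{\#} \sqcup L_a^{\#}$ consists of blackboard pushoffs inside the respective hyperplanes, and the standard formula for algebraic intersections of surfaces in $\rr^4$ bounded in parallel hyperplanes (obtained by pushing $L^{\#}$ through the slab $\{a \leq y_2 \leq b\}$ and tallying crossings at the ends) gives
\begin{equation*}
L \cdot L^{\#} = \mathrm{lk}(L_b, L_b^{\#}) - \mathrm{lk}(L_a, L_a^{\#}) = \writhe(L_b) - \writhe(L_a),
\end{equation*}
the last equality being the classical identity that the linking of a knot in $\rr^3$ with its blackboard pushoff equals the writhe of its diagram. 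Comparing the two computations produces the desired equality $\chi(L) = \writhe(L_b) - \writhe(L_a)$.

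The main obstacle is the local identification needed for the Euler characteristic count: one must verify that, under $\nu_p L \cong T_p^*L \cong T_pL$, the blackboard normal at a point $p \in L_t$ is sent to a tangent vector of $L$ in the outward-pointing homotopy class. This reduces to a linear-algebra calculation with $\omega = dx_1 \wedge dy_1 + dx_2 \wedge dy_2$, using the transversality of $L$ to $\{y_2 = t\}$ to isolate a nonzero $\partial_{y_2}$-component of the correct sign; the sign works out because the blackboard normal in the $(x_1,y_1)$-plane pairs against the transverse tangent direction $\partial_{y_2}$ to $L$ exactly through the $dx_1 \wedge dy_1$ part of $\omega$.
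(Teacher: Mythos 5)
Your proposal reconstructs exactly the argument the paper has in mind: the paper's proof consists of a single sentence citing Chantraine \cite[Theorem 6.4]{chantraine} and indicating that one substitutes the blackboard framing and the writhe for the Reeb framing and $tb$. Your double count of $L \cdot L^{\#}$ via the Lagrangian normal-bundle isomorphism on one side and boundary linking numbers on the other is precisely that substitution, and it is the right strategy.

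One thing you should be more careful about, though, is the sign in the Poincar\'e--Hopf step. For a \emph{closed} Lagrangian surface the standard identity is $L \cdot L = -\chi(L)$, precisely because the canonical isomorphism $\nu L \to T^*L$, $v \mapsto \iota_v\omega|_{TL}$, is orientation-\emph{reversing} when $\nu L$ carries the quotient orientation (the one used to define the self-intersection number) and $T^*L$ carries the orientation dual to $TL$. So transporting a normal section to a tangent field flips the sign of the zero count, and one does not simply get $L\cdot L^{\#}=\chi(L)$ by invoking Poincar\'e--Hopf for an outward-pointing field. You need to track where this minus sign goes: either it is absorbed in the boundary-orientation conventions entering $L \cdot L^{\#}=\pm\bigl(\mathrm{lk}(L_b,L_b^{\#}) - \mathrm{lk}(L_a,L_a^{\#})\bigr)$, or your assertion that the transported field is homotopic to the \emph{outward} (rather than inward, or sideways with nonzero winding) field needs to be reexamined, since the component of $\iota_n\omega|_{TL}$ along the slice tangent $\tau$ is in fact nonzero. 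You do correctly flag this local verification as ``the main obstacle,'' but the resolution has to produce a sign that reconciles the Lagrangian identity $-\chi$ with the claimed conclusion $+\chi$; as written, the two halves of your computation appear to be using incompatible conventions. (Also a cosmetic point: the horizontally lifted blackboard normal is transverse to $L$, not literally perpendicular to it, which is all that is needed to define a section of $\nu L$, but ``normal to $L$'' overstates it.)
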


\begin{proof}[Proof of Theorem~\ref{thm:8cobord}]  Suppose that there
exists a Lagrangian cobordism $L \subset \{ a \leq y_2 \leq b \} \subset \rr^4$
with $L_a = i_a(8_-^1(r))$ and $L_b = i_b(8_-^1(R))$.  This
cobordism can be extended to a flat-at-infinity planar Lagrangian as
follows.  After perhaps translating $L$, we know, by Example~\ref{exam:cylinder_construct}, that 
there exists a Lagrangian disk in $\{ y_2 \geq b\}$ with boundary
$ i_b(8_-^1(R))$ and a flat-at-infinity planar Lagrangian with a disk removed that lies
  in $\{y_2 \leq a\}$ and has boundary $i_a(8^1_-(r))$.  Using 
  Lemma~\ref{lem:gluing}, we can glue the cobordism, the disk,
  and the punctured plane together to form a flat-at-infinity Lagrangian.     
   Since $\writhe(8^1_-(r)) =
  \writhe(8^1_-(R))$, Lemma~\ref{lem:writhe} shows that the Lagrange
  cobordism $L$ must be topologically an annulus.  Thus, the
  Lagrangian $L'$ must be planar.  By a result of Eliashberg and Polterovich
  \cite{ep:local-knots}, $L'$ is unknotted, and we can apply the proof
  of Theorem~\ref{thm:higher-dim8s} to show that no such $L'$ exists.
\end{proof}

Remark~\ref{rem:alt-8cobord}, below, gives an alternate proof of
Theorem~\ref{thm:8cobord} using explicit calculations of capacities.

% **********
\subsection{$4$-dimensional Computations}
\label{ssec:compute}

We have already seen that the functorial properties of
the capacities can be used to obtain obstructions to cobordisms.  
In order to get some understanding of what the capacities are
measuring, we now explicitly calculate  capacities of some slices.
Recall that in general, capacities for a slice of a Lagrangian
depend on the entire Lagrangian.  However, we will show that
when the slice is particularly simple, for example when the
slice agrees with $8^1_\pm(r)$, the capacities only depend
on the slice.  

A key step to doing explicit calculations is to determine the critical
 values and indices of
critical points of $\Delta_a$ geometrically from the projection
$\pi(L_a)$.  The computation of critical values follows from
Lemma~\ref{lem:crit-val} in the previous section where it is shown that the  
critical value associated to a critical point $q=(x_1, x_2, \e,
\tilde{x}_2, \tilde{\e})$ of $\Delta_a$ is 
 obtained by
 integrating along a path $i_a \circ \gamma_q \subset \pi(L_a)$.  Similarly,
 the index of an isolated critical point $q$ of $\Delta_a$ may also be calculated using
the path $\gamma_q$.  The path $\gamma_q$ defines a path $\Gamma_q$ of
Lagrangian subspaces of $\rr^2$ via
$$\Gamma_q(t) = T_{i_a \circ \gamma_q(t)} \proj(L_a).$$
Close this path to a loop $\bar{\Gamma}_q$ by rotating $\Gamma_q(1)$
clockwise until it coincides with $\Gamma_q(0)$.  The index of
$\Delta_a$ at $q$ can then be calculated in terms of the Maslov index
of $\bar{\Gamma}_q$:

\begin{lem}
  \label{lem:morse-index} If $q$ is an isolated critical point of
  $\Delta_a$ and $\gamma_q$ is a path in $\rr \times \rr^{1+N}$ as in
  Lemma~\ref{lem:crit-val}, then:
  \begin{equation*} \ind_q \Delta_a = -\mu(\bar{\Gamma}_q) + (N+1).
  \end{equation*}  
\end{lem}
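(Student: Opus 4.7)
My plan is to compute $\ind_q\Delta_a$ from the block-diagonal form of the Hessian at $q$, to compute $\mu(\bar\Gamma_q)$ by decomposing the loop into the open path $\Gamma_q$ and the clockwise closing arc $\alpha$ using the classical generating-family/Maslov dictionary on each piece, and then to compare the two expressions.

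Applying the fiber-preserving normal form from the proof of Lemma~\ref{lem:crit-pts}, write $F_a(\x, x_n, \e) = g(\x) + h(x_n, \e)$ near the first preimage $p_1 \in \Sigma_{F_a}$ and $F_a(\x, \tilde x_n, \tilde \e) = \tilde g(\x) + \tilde h(\tilde x_n, \tilde \e)$ near the second preimage $p_2 \in \Sigma_{F_a}$ of the double point $i_a\circ\gamma_q(t)$. The block-diagonal Hessian displayed in equation~\eqref{eqn:hessian-delta} then gives
\begin{equation*}
\ind_q\Delta_a = \ind(d^2g - d^2\tilde g) + \ind(A) + (1+N) - \ind(\tilde A).
\end{equation*}

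Next, write $\mu(\bar\Gamma_q) = \mu(\Gamma_q) + \mu(\alpha)$. In the normal form, the tangent line to $\pi(L_a)$ at $i_a(p)$ is the graph of the base-Hessian $d^2 g(\x)$, and as one traverses $\gamma_q$ the crossings of $\Gamma_q$ with the vertical Lagrangian in $T^*\rr^{n-1}$ occur precisely at points where the fiber-Hessian of $F_a$ becomes singular. The standard generating-family crossing formula (cf.\ Duistermaat, Viterbo) equates the signed crossing count with the jump in fiber-index, yielding
\begin{equation*}
\mu(\Gamma_q) = \ind(\tilde A) - \ind(A).
\end{equation*}
The closing arc $\alpha$ rotates clockwise in the Lagrangian Grassmannian from $\Gamma_q(1)$, the graph of $d^2 g$, to $\Gamma_q(0)$, the graph of $d^2 \tilde g$. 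A direct calculation in $\Lambda(1)$ shows that the short clockwise arc between these two lines passes through the vertical Lagrangian exactly when $d^2 g - d^2\tilde g < 0$, giving
\begin{equation*}
\mu(\alpha) = -\ind(d^2g - d^2\tilde g).
\end{equation*}
Summing these contributions and substituting into the Hessian-index formula above yields $\mu(\bar\Gamma_q) = (N+1) - \ind_q\Delta_a$, which rearranges to the claimed identity.

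The main obstacle is pinning down the sign and normalization conventions for the Maslov crossing formula, since the Robbin--Salamon, Kashiwara--Wall, and Duistermaat/H\"ormander conventions all differ by constants, and one must also be careful about the clockwise versus counterclockwise orientation of the closing arc. Example~\ref{exam:explicit_crit_values} (where $N = 0$ and $\ind(q_\pm) = 0, 3$) provides a useful sanity check: the formula predicts $\mu(\bar\Gamma_{q_+}) = 1$ and $\mu(\bar\Gamma_{q_-}) = -2$, which can be verified directly by tracking the tangent line along the corresponding lobe of the figure-eight slice.
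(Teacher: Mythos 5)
Your proof follows the paper's scheme closely: start from the block-diagonal Hessian in Equation~(\ref{eqn:hessian-delta}), write $\mu(\bar{\Gamma}_q)$ as the contribution of the open path $\Gamma_q$ plus that of the closing arc, express both pieces in Hessian-index data, and combine. The one substantive difference is which index formula you apply to $\Gamma_q$: you use the fiber-index crossing count $\mu(\Gamma_q) = \ind(\tilde{A}) - \ind(A)$, while the paper invokes Th\'eret's formula in terms of the full normal-form Hessian, $\mu(\Gamma_q) = \ind_{\gamma_q(0)}F_a - \ind_{\gamma_q(1)}F_a$. The two differ by exactly $\ind\tilde{g}'' - \ind g''$, which in your bookkeeping is shifted into the closing-arc term. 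Your split is cleaner, since $\mu(\Gamma_q)$ then depends only on fiber data and $\mu(\alpha)$ only on base data, and it sidesteps a sign slip in the paper's displayed correction term: substituting Example~\ref{exam:explicit_crit_values} with $N=0$ into $\ind(\tilde{g}''-g'')-\ind\tilde{g}''+\ind g''$ yields $\ind_{q_-}\Delta_a = 1-(-2)+1 = 4$ rather than the correct $3$; the term should read $\ind(g''-\tilde{g}'')-\ind g''+\ind\tilde{g}''$. Two minor caveats in your write-up are real but small: the crossing formula for $\mu(\Gamma_q)$ is a Robbin--Salamon index relative to the vertical Lagrangian and must be normalized consistently with the lemma's convention, and $\mu(\alpha)=-\ind(g''-\tilde{g}'')$ should be stated for a clockwise arc between two graph Lagrangians in $\Lambda(n-1)$, not just $\Lambda(1)$. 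Your numerical check against $q_\pm$ in Example~\ref{exam:explicit_crit_values} correctly pins down both.
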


\begin{proof} Equation~(\ref{eqn:hessian-delta}) shows that:
  \begin{equation} \label{eqn:hessian-ind} \ind_q \Delta_a =
    \ind_{x_1} (g''-\tilde{g}'') + (\ind A - \ind \tilde{A}) + (N+1).
  \end{equation} 
  To interpret this formula in terms of the Maslov index, consider
  that the path of Hessians $d^2F_a(\gamma_q(s))$ generates the path
  of Lagrangian subspaces $\Gamma_q$ in the language of Th\'eret
  \cite[Appendix B]{theret:camel}.  
  Th\'eret further shows
  that 
  \begin{equation*} \label{eqn:maslov-ind} \mu(\Gamma_q) =
    \ind_{\gamma_q(0)} F_a - \ind_{\gamma_q(1)} F_a.
  \end{equation*} 
  (Note that Th\'eret takes the opposite sign convention for the
  Maslov index than the one used here and in
  \cite{robbin-salamon:maslov}.)  By construction, we may compute that
  \begin{align*} d^2F_a(\gamma_q(0)) &= \begin{bmatrix} \tilde{g}''
      & 0 \\ 0 & \tilde{A}
    \end{bmatrix}, & d^2F_a(\gamma_q(1)) &= \begin{bmatrix} g'' & 0
      \\ 0 & A
    \end{bmatrix}.
  \end{align*} 
  Combining this with Equations (\ref{eqn:hessian-ind}) and
  (\ref{eqn:maslov-ind}), we obtain
  \begin{multline*} \ind_q \Delta_a = (N+1) -\mu(\Gamma_q) + \left[ \ind
      (\tilde{g}'' - g'') - \ind (\tilde{g}'') + \ind (g'') \right].
  \end{multline*} It is easy to check that the term $\ind (\tilde{g}''
  - g'') - \ind (\tilde{g}'') + \ind (g'')$ accounts precisely for the
  change in Maslov index engendered by closing $\Gamma_q$ into
  $\bar{\Gamma_q}$. This completes the proof of the index formula for
  an isolated critical point.
\end{proof}

\begin{exam} In Example \ref{exam:explicit_crit_values}, we calculated
  the indices of the non-degenerate critical points of an explicit
  generating family (with $N = 0$) of a Lagrangian with slices
  agreeing with $8^1_-(r)$.  Lemma~\ref{lem:morse-index} gives us a
  way to calculate the indices of the non-degenerate critical points
  of {\it any} difference function $\Delta_a$ of {\it any} Lagrangian
  having a slice agreeing with such a unknotted figure-8 curve with a
  negative crossing. Notice that the loop $\bar \Gamma_{q_-}$ associated to
  the critical point $q_- \in \mathcal P_-$ ($x_2(q_-) > \tilde
  x_2(q_-)$) will make one full rotation in the clockwise direction,
  and thus $\mu(\bar\Gamma_{q_-}) = -2$.  So for any difference function
  $\Delta_a$ for $L_a$ with domain $\rr \times \rr^{1+N} \times
  \rr^{1+N}$, $\ind_{q_-}\Delta_a = 2 + N+1 = N+ 3$.  The loop $\bar
  \Gamma_{q_+}$ associated to $q_+ \in \mathcal P_+$ will make a half
  rotation in the counterclockwise direction, and so
  $\ind_{q_+}\Delta_a = -1 + N + 1 = N$. $\diamond$
\end{exam}

Now that we can calculate critical values and indices using only
$L_a$, let us proceed to calculate the capacities of $8_\pm^1(r)$
or, more generally, any slice with a projection diagram
 agreeing with that of 
$8_\pm^1(r)$.  
   To set up the geometric
situation more precisely, let $L_a$ and $L_a'$ be slices that project
to immersed curves in the $(x_1, y_1)$-plane with one double point
$q$.  Let $q^+, q^-$ denote the preimages of $q$ in $L_a$ (resp.\
$L_a'$) with $x_2(q^+) > x_2(q^-)$, and assume that the crossing in
$L_a$ (resp.\ $L_a'$) is negative (resp. positive) so that the path
$i_a \circ \gamma_q$ from $q^+$ to $q^-$ in $L_a$ (resp.\ $L_a'$)
projects to a loop traversed in the counterclockwise (resp.\
clockwise) direction.  Let $A$ denote the absolute value of the area
of the region bounded by $\pi \circ i_a \circ \gamma_q$. With this
notation, we have the following calculation of capacities:

\begin{prop}  \label{prop:4-dim-compute} Let $L, L'$ be any flat-at-infinity planar Lagrangians
  with slices $L_a, L_a'$ as described above.  Then, for $0 \neq u \in
  H^0(L_a)$ and $0 \neq v \in H^1(L_a)$,
  $$ 
  c_+^{L,a} (u) = -A, \quad c_-^{L,a}(u) = 0, \quad C_+^{L,a}(v) = 0,
  \quad C_-^{L,a}(v) = A.
  $$
  For $0 \neq u \in H^0(L'_a)$ and $0 \neq v \in H^1(L'_a)$,
  $$ 
  c_+^{L',a} (u) =0, \quad c_-^{L',a}(u) = -A, \quad C_+^{L',a}(v) = A,
  \quad C_-^{L',a}(v) = 0.
  $$
\end{prop}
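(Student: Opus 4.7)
The plan is to combine a complete enumeration of the critical set of $\Delta_a$ with a half-space Morse-theoretic analysis of the maps $\varphi_{a,\pm}^\lambda$ and $\Phi_{a,\pm}^\Lambda$, using that by Lemma~\ref{lem:cap-at-crit} each capacity must be realized at a critical value of $\Delta_a$.

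First, as in Examples~\ref{exam:explicit_crit_values} and~\ref{exam:diagram-crit-vals}, I would identify all critical points of $\Delta_a$ from the single double point of $\pi(L_a)$: two isolated critical points $q_+\in\mathcal{P}_+$ and $q_-\in\mathcal{P}_-$, together with the critical submanifold $C_a\cong L_a$ of value $0$ and index $N+1$ lying on the common boundary $\mathcal{P}_0$. For the negative-crossing slice $L_a$, Lemma~\ref{lem:crit-val} applied to the counterclockwise loop $\pi\circ i_a\circ \gamma_q$ of signed area $-A$ yields critical values $-A$ at $q_+$ and $+A$ at $q_-$, while Lemma~\ref{lem:morse-index} yields indices $N$ at $q_+$ and $N+3$ at $q_-$. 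For the positive-crossing slice $L_a'$, the orientation of $\gamma_q$ reverses, swapping both the signs of the critical values and the half-spaces in which the critical points lie.

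By Lemma~\ref{lem:cap-at-crit}, each lower capacity lies in $\{-A,0\}$ and each upper capacity lies in $\{0,A\}$, so every entry in the table reduces to a binary decision. For $0\ne u\in H^0(L_a)$, the class $\mathcal{I}(u)\in H^{N+1}(\Delta_a^\eta,\Delta_a^{-\eta})$ is detected entirely by the descending bundle of $C_a$, since no isolated critical point has index $N+1$. For $c_-^{L,a}(u)$, the open half-space $\mathcal{P}_-\setminus\mathcal{P}_0$ contains no critical point of negative value, so Lemma~\ref{lem:def-retr} gives $\varphi_{a,-}^\lambda(u)\ne 0$ for all $\lambda<0$ and hence $c_-^{L,a}(u)=0$. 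For $c_+^{L,a}(u)$, crossing $\lambda=-A$ from above inside $\mathcal{P}_+$ attaches the descending $N$-cell of $q_+$, and the long exact sequence of the triple $(\Delta_{a,+}^\eta,\Delta_{a,+}^{-A+\epsilon},\Delta_{a,+}^{-A-\epsilon})$, together with the non-trivial Morse--Bott boundary relation between $C_a$ and $q_+$, shows that $\varphi_{a,+}^{-A+\epsilon}(u)$ is killed at $\lambda=-A-\epsilon$, giving $c_+^{L,a}(u)=-A$. The upper capacities for $v\in H^1(L_a)$ are handled symmetrically: $D_a^\Lambda\mathcal{I}(v)\in H^{N+3}(\Delta_a^\Lambda,\Delta_a^\eta)$ is activated at $\Lambda=A$ by the $(N+3)$-cell of $q_-\in\mathcal{P}_-$, which is annihilated by $i_+^*$ (giving $C_+^{L,a}(v)=0$) and preserved by $i_-^*$; Non-Vanishing (Theorem~\ref{thm:properties}) then forces $C_-^{L,a}(v)=A$. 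The computation for $L_a'$ is identical after the interchange $\mathcal{P}_+\leftrightarrow\mathcal{P}_-$.

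The main obstacle I anticipate is the Morse-theoretic bookkeeping on the manifolds-with-boundary $\mathcal{P}_\pm$, particularly the verification that the relevant Morse--Bott boundary operators (from $C_a$ down to $q_+$ in the lower case, and from $q_-$ down to $C_a$ in the upper case) are non-trivial and that their non-triviality survives the restrictions $i_\pm^*$. This requires choosing a pseudo-gradient tangent to $\mathcal{P}_0$ along $C_a$, as in the proof of Lemma~\ref{lem:def-retr}, and then combining the half-space flow analysis with the Mayer--Vietoris splitting of Lemma~\ref{lem:m-v-splitting} to track which contributions survive restriction to each side.
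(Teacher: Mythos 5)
Your identification of critical points, values, and indices agrees with the paper, as does your argument that $c_-^{L,a}(u)=0$ (and, implicitly, $C_+^{L,a}(v)=0$) via the absence of critical points of the relevant value in the relevant half-space. But there is a genuine gap in how you pin down the non-vanishing capacities, and it is precisely the gap you flag as an ``obstacle'' at the end.

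For $c_+^{L,a}(u)=-A$ you propose to show directly that the class $\mathcal{I}(u)$ is killed when the descending $N$-cell of $q_+$ is attached, i.e.\ that a certain Morse--Bott boundary map is non-trivial. You never verify this, and it is not an innocuous bookkeeping point: the paper's proof is constructed specifically so that no such boundary computation is needed. What the paper does instead is to show that \emph{three} of the four capacities of the class $u$ vanish --- $c_-^{L,a}(u)=0$ (which you have), and also $C_\pm^{L,a}(u)=0$ (since there is no critical point of index $N+2$ with positive critical value, $\Phi^\Lambda_{a,\pm}(u)=0$ for all $\Lambda$). Non-Vanishing then forces $c_+^{L,a}(u)\ne 0$, and Lemma~\ref{lem:cap-at-crit} forces $c_+^{L,a}(u)=-A$ because $-A$ is the only negative critical value. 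This completely sidesteps any boundary-operator computation. The same remark applies to $C_-^{L,a}(v)=A$: your appeal to Non-Vanishing there is on the right track, but to invoke it for $v\in H^1$ you must also check $c_\pm^{L,a}(v)=0$, not just $C_+^{L,a}(v)=0$, and you do not address this. (Your phrase ``preserved by $i_-^*$'' is again a claim about a non-trivial boundary map that you have not established.) So the fix is: for each of $u$ and $v$, verify vanishing of the three ``easy'' capacities via Lemma~\ref{lem:def-retr} and the index calculation, then conclude via Non-Vanishing and Lemma~\ref{lem:cap-at-crit} rather than via the Morse--Bott boundary relation.
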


\begin{proof} Let $F: \rr^2 \times \rr^N \to \rr$ be a
  quadratic-at-infinity generating family for any Lagrangian $L$ whose
  slice at $y_2 = a$ is $L_a$.  By Lemma \ref{lem:crit-pts}, $F_a$
  must have two non-degenerate critical points and a non-degenerate
  critical submanifold.  By Lemma \ref{lem:crit-val}, the loop $i_a
  \circ \gamma_q$ with clockwise orientation is associated with the
  critical point with positive critical value, and hence this critical
  point lies in $\mathcal P_- = \{ x_2 > \tilde x_2 \}$.  By
  Lemmas~\ref{lem:crit-val} and \ref{lem:morse-index}, the critical
  value of this point is $A$, and its index must be $N+3$.  By a
  similar argument, the other critical point occurs in $\mathcal P_+=
  \{ x_2 < \tilde x_2 \}$, has critical value $-A$ and index $N$.
  Calculations of the non-degenerate critical points, values, and
  indices for $L_a'$ are analogous.

  We will compute the lower capacities $c_\pm^{L,a} (u)$; calculations
  for the upper capacities $c_\pm^{L,a} (v)$ and all capacities of
  $L'_a$ follow analogously. To show that $c_-^{L,a}(u) = 0$, notice
  that the index and critical value calculations above, combined with
  Lemma~\ref{lem:def-retr}, show that $H^k(\Delta_{a, -}^{-\eta},
  \Delta_{a, -}^\lambda ) = 0$, for all $k$ and for any $\lambda
  <-\eta$.  Thus, examining the exact sequence of the triple
  $(\Delta_{a,-}^\eta, \Delta_{a, -}^{-\eta}, \Delta_{a,
    -}^{\lambda})$, we see that $\varphi^{\lambda}_{a,-}$ is an
  isomorphism for all $\lambda$, and so we are done by
  Corollary~\ref{cor:vanishing_capacities}.  Similar arguments using
  the fact that there are no critical points of index $N+2$ with
  positive critical value show that the upper capacities
  $C_\pm^{L,a}(u)$ also vanish.  Thus, by Non-Vanishing, it must be
  the case that $c_+^{L,a}(u) \neq 0$. Since $-A$ is the only negative
  critical value of $\Delta_a$, Lemma~\ref{lem:cap-at-crit} shows that
  $c_+^{L,a}(u) = -A$, as desired.
\end{proof}

\begin{rem}  \label{rem:alt-8cobord}
An alternate proof to Theorem~\ref{thm:8cobord} follows
from gluing arguments,
 this explicit calculation of the capacities, and Monotonicity.
\end{rem}

% *****
\subsection{Non-Squeezing Phenomena}
\label{ssec:squeeze}

As in the proofs of the non-existence results, the non-squeezing
results in Theorems~\ref{thm:8squeeze} and \ref{thm:ht-cap} rely
primarily on Monotonicity.

\begin{proof}[Proof of Theorem~\ref{thm:ht-cap}]
  We prove the inequality for $C_-$; the other proofs are entirely
  analogous.  Fix $u \in H^*(L)$.  Recall from
  Proposition~\ref{prop:cont} that $C_-(t) = C_-^{L,t}(j^*_tu)$ is a
  continuous, piecewise differentiable function of $t$.  If the
  capacity comes from a piecewise continuous path of crossings
  $(\x(t)$, $x_n(t)$, $\tilde{x}_n(t))$, then the proof of
  Monotonicity shows that at all but finitely many levels $t$, $C_-'(t)
  = x_n(t) - \tilde{x}_n(t)$.  Since the capacity $C_-(t)$ comes from
  a critical point in $\mathcal{P}_-$ and, by hypothesis, $x_n(t)$ and
  $\tilde x_n(t)$ lie in an interval of length $\ell$, we have, for
  all but finitely many $t \geq a$:
  \begin{equation}
    -\ell \leq C_-'(t) \leq 0.
  \end{equation}
  The theorem now follows from this bound on the derivative and the
  hypothesis that when $t > w+a$, $C_-(t) = 0$ (since $L_t =
  \emptyset$).
\end{proof}  

\begin{proof}[Proof of Theorem~\ref{thm:8squeeze}] For any $r > 0$, a
  Lagrangian disk $L \subset \rr^4$ with boundary $\partial L = (L
  \cap \{y_2=a\} ) = i_a(8^1_\pm(r))$ can be extended to a
  flat-at-infinity planar Lagrangian.  This follows from the
  construction in Example~\ref{exam:cylinder_construct} and a gluing
  argument of Lemma~\ref{lem:gluing}.  The result then follows
  immediately as a corollary of Theorem~\ref{thm:ht-cap} given the
  calculation of the capacities in
  Proposition~\ref{prop:4-dim-compute}.  \end{proof}

% *****
\subsection{Capacities and Field Theory}
\label{ssec:ft}

To bring the capacities into an SFT-style framework, we use the
relative cohomology groups of sublevel sets to assign a filtered
cohomology theory --- really, four filtered cohomology theories --- to
each slice.  To a Lagrange cobordism between slices, we assign a
filtered homomorphism for each of the filtered cohomologies.  As we
shall see, the direction of the homomorphism depends on whether we
consider sublevel sets in $\mathcal{P}_+$ or in $\mathcal{P}_-$.
Further, the capacities can be used to detect non-triviality of the
homomorphisms.

Based on the definitions of the capacities, we define:

\begin{defn}
  \label{defn:homology}
  The \textbf{positive and negative lower filtered cohomology groups}
  of a generic slice of an unknotted planar Lagrangian $L$ are
  $$h^*_{\pm, \lambda}(L,a) = H^{*+N+1}(\Delta_{a,\pm}^{-\eta},
  \Delta_{a,\pm}^\lambda),$$ where $\lambda < 0$ is the filtration
  level and $\eta$ is any positive real number such that there are no
  critical values of $\Delta_a$ in $[-\eta, 0)$.  The \textbf{positive
    and negative upper filtered cohomology groups} are defined
  similarly:
  $$H^*_{\pm, \Lambda}(L,a) = H^{*+N+2}(\Delta_{a,\pm}^{\Lambda},
  \Delta_{a,\pm}^\eta),$$ for all $\Lambda > 0$.
\end{defn}

The proof of Lemma~\ref{lem:cap-well-def} shows that these
cohomologies are independent of the generating family used to define
$L$, up to an overall shift in degree.  Further,
Lemma~\ref{lem:def-retr} shows that the filtered cohomologies can only
change values when the filtration level passes through a critical
value of $\Delta_a$.

For $a < b$, the inclusions $i_+: \Delta^\lambda_{b,+} \hookrightarrow
\Delta^\lambda_{a,+}$ and $i_-: \Delta^\lambda_{a,-} \hookrightarrow
\Delta^\lambda_{b,-}$ from Equation~(\ref{eqn:difference-inclusions})
yield filtered homomorphisms between the cohomology groups of
different slices:
\begin{align*}
  i_+^*&: h^*_{+,\lambda}(L,a) \to h^*_{+,\lambda}(L,b), \quad
  i_+^*: H^*_{+,\Lambda}(L,a) \to H^*_{+,\Lambda}(L,b), \\
  i_-^*&: h^*_{-,\lambda}(L,b) \to h^*_{-,\lambda}(L,a), \quad i_-^*:
  H^*_{-,\Lambda}(L,b) \to H^*_{-,\Lambda}(L,a).
\end{align*}
As mentioned above, the capacities can give information about when
these maps are nontrivial.  For example, given $0 \not \in [a,b]$, the
analogue of the diagram in Lemma~\ref{lem:mono-comm-diag} for the
upper capacities immediately implies:

\begin{prop} \label{prop:cap-nontriv-homom} With notation as in
  Section~\ref{ssec:mono}, suppose that $C^{L,b}_+(j^*_b u) > 0$, for
  some $u \in H^k(W)$.  Then the map $i_+^*: H^k_{+,\Lambda}(L,a) \to
  H^k_{+,\Lambda}(L,b)$ is nontrivial for $\Lambda = C^{L,b}_+(j^*_b
  u)$.
\end{prop}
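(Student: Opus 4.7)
The plan is to follow the template of the Monotonicity proof in Section~\ref{ssec:mono}, replacing the projection $p^\lambda$ with the connecting homomorphism $D^\Lambda$ of Definition~\ref{defn:connecting maps}, and then to diagram-chase a distinguished class.

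Concretely, using the extended difference function $\Delta_{ab}$ together with naturality of the Gysin sequence, the Mayer--Vietoris splitting (Lemma~\ref{lem:m-v-splitting}), and the long exact sequence of a triple, I would first establish the upper-capacity analogue of the diagram in Lemma~\ref{lem:mono-comm-diag}:
\begin{equation*}
\xymatrix{
& H^k(L_a) \ar[rr]^-{\Phi^{\Lambda}_{a,+}} & & H^{k+N+2}(\Delta^{\Lambda}_{a,+}, \Delta^{\eta}_{a,+}) \ar[dd]^{i^*_+} \\
H^k(L_{[a,b]}) \ar[ur]^{j^*_a} \ar[dr]_{j^*_b} & & & \\
& H^k(L_b) \ar[rr]_-{\Phi^{\Lambda}_{b,+}} & & H^{k+N+2}(\Delta^{\Lambda}_{b,+}, \Delta^{\eta}_{b,+}).
}
\end{equation*}
The argument for commutativity is line-by-line the same as in Lemma~\ref{lem:mono-comm-diag}, with the degree-raising connecting map $D^\Lambda$ playing the role of the degree-preserving projection $p^\lambda$.

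Set $\Lambda = C^{L,b}_+(j^*_b u)$, which by Lemma~\ref{lem:cap-at-crit} is a critical value of $\Delta_b$. I would then argue that $\Phi^{\Lambda}_{b,+}(j^*_b u) \neq 0$ from two ingredients: first, naturality of $D^\Lambda$ under the inclusion of sublevel pairs shows that the set of $\Lambda'$ with $\Phi^{\Lambda'}_{b,+}(j^*_b u) \neq 0$ is upward-closed, so this set contains every $\Lambda' > \Lambda$; second, a Morse--Bott argument (the closed sublevel set at a critical value already contains the descending cells of the critical points at that level) shows that the inclusion $\Delta^{\Lambda}_{b,+} \hookrightarrow \Delta^{\Lambda+\epsilon}_{b,+}$ is a homotopy equivalence for sufficiently small $\epsilon > 0$, so nontriviality transfers back to $\Lambda$. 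Taking $w = \Phi^{\Lambda}_{a,+}(j^*_a u) \in H^k_{+,\Lambda}(L,a)$, commutativity of the diagram then forces $i_+^*(w) = \Phi^{\Lambda}_{b,+}(j^*_b u) \neq 0$, giving the nontriviality of $i_+^*$.

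The main obstacle I anticipate is handling this endpoint behavior at $\Lambda = C^{L,b}_+(j^*_b u)$: the infimum definition of the capacity does not a priori deliver nontriviality exactly at the infimum, and the Morse--Bott input above is the only non-formal piece of the argument. Everything else flows mechanically from the commutative diagram.
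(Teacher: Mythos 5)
Your proof is correct and takes essentially the same route as the paper: the paper's stated justification is simply that the upper-capacity analogue of the commutative diagram in Lemma~\ref{lem:mono-comm-diag} (built via the extended difference function $\Delta_{ab}$ and naturality of the Gysin, Mayer--Vietoris, and long-exact-sequence maps) ``immediately implies'' the result, which is exactly the diagram you draw and chase. The one place you go beyond the paper's terse wording is the endpoint issue --- that the infimum defining $C^{L,b}_+(j^*_b u)$ is actually attained, i.e.\ that $\Phi^{\Lambda}_{b,+}(j^*_b u) \neq 0$ at $\Lambda = C^{L,b}_+(j^*_b u)$ itself. Your handling of it is sound: naturality of $D^\Lambda_a$ under the inclusion of triples shows that $\{\Lambda : \Phi^\Lambda_{b,+}(j^*_b u) \neq 0\}$ is upward-closed, and the standard Morse-theoretic fact that the closed sublevel set at a critical value is a deformation retract of a slightly larger one (applied to the split sublevel sets via the modified vector field from the proof of Lemma~\ref{lem:def-retr}) transfers nontriviality back to the critical value. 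This is the right observation; the paper glosses over it, and you are correct that it is the only non-formal piece of the argument.
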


\begin{rem} \label{rem:leg-cob} For large enough $\Lambda$, the group
  $H^*_{+,\Lambda}(L,a) \oplus H^*_{-,\Lambda}(L,a)$ is equal to the
  generating family (co)homology of the lift of $\pi(L_a)$ to a
  Legendrian link in the standard contact $\rr^3 = J^1\rr$; see Fuchs
  and Rutherford \cite{fuchs-rutherford}, Traynor \cite{lisa:links},
  or Jordan and Traynor \cite{lisa-jill} for more information on
  generating family homology for Legendrian knots and links.  In fact,
  since $L$ is exact, it lifts to a Legendrian cobordism between the
  lifts of $\pi(L_a)$ and $\pi(L_b)$.  Thus, the field theory in this
  section suggests a structure for a field theory for Legendrian
  cobordisms between Legendrian knots defined by generating
  families. As Fuchs and Rutherford proved that the generating family
  cohomology of a Legendrian knot is isomorphic to the linearized
  contact homology (when these are defined), this remark justifies
  saying that the field theory defined above fits into an SFT
  framework.
\end{rem}

% *****

\bibliographystyle{amsplain} 
\bibliography{main}

\end{document}